\tikzset{
every node/.style={circle, draw, inner sep=2pt},
every picture/.style={thick}
}
\newtheorem{theorem}{Theorem}
\newtheorem{lemma}[theorem]{Lemma}
\newtheorem{proposition}[theorem]{Proposition}
\newtheorem{corollary}[theorem]{Corollary}
\theoremstyle{definition}
\newtheorem{definition}[theorem]{Definition}
\newtheorem{observation}[theorem]{Observation}
\newtheorem{remark}[theorem]{Remark}
\newtheorem{example}[theorem]{Example}
\newtheorem{question}[theorem]{Question}
\def \mr {\operatorname{mr}}
\def \rank {\operatorname{rank}}
\def \coker {\operatorname{coker}}
\newcommand{\diag}{\operatorname{diag}}
\newcommand{\SNF}{\operatorname{SNF}}
\newcommand{\lrangle}[1]{\left\langle#1\right\rangle}
\newcommand{\mz}{\operatorname{mz}}
\begin{document}

\title{The structure of sandpile groups of outerplanar graphs}
\author{Carlos A. Alfaro$^a$ and
Ralihe R. Villagr\'an$^b$
\\ \\
{\small $^a$ Banco de M\'exico} \\
{\small Mexico City, Mexico}\\
{\small {\tt alfaromontufar@gmail.com,carlos.alfaro@banxico.org.mx}} \\
{\small $^b$Departamento de Matem\'aticas}\\ {\small Centro de Investigaci\'on y de Estudios Avanzados del IPN}\\
{\small Apartado Postal 14-740, 07000 Mexico City, Mexico} \\
{\small {\tt
rvillagran@math.cinvestav.mx}}\\
}
\date{}

\maketitle

\begin{abstract}
We compute the sandpile groups of families of planar graphs having a common weak dual by evaluating the indeterminates of the critical ideals of the weak dual at the lengths of the cycles bounding the interior faces.
This method allow us to determine the algebraic structure of the sandpile groups of outerplanar graphs, and can be used to compute the sandpile groups of many other planar graph families.
Finally, we compute the identity element for the sandpile groups of the dual graphs of many outerplane graphs.
\end{abstract}

{\bf Keywords:} sandpile group, outerplanar graphs, Gr\"obner bases, critical ideals, spanning tree.

\section{Introduction}

The dynamics of the {\it Abelian sandpile model}, which was firstly studied by Bak, Tang and Wiesenfeld in \cite{btw}, is developed on a connected graph $G$ with a special vertex $q$, called {\it sink}.
We denote by $\mathbb{N}$ the set of non-negative integers.
In the sandpile model, a configuration on $(G,q)$ is a vector ${ c}\in \mathbb{N}^{V}$, in which entry  ${ c}_v$ is associated with the number of {\it grains of sand} or {\it chips} placed on vertex $v$.
Two configurations $c$ and $d$ are {\it equal} if $c_v=d_v$ for each non-sink vertex.
The sink vertex is used to collect the sand getting out the system.
A non-sink vertex $v$ is called \textit{stable} if ${ c}_v$  is less than its {\it degree} $d_G(v)$, and {\it unstable}, otherwise.
Thus, a configuration is called \textit{stable} if every non-sink vertex is stable.
The {\it toppling rule} in the dynamics of the model consists in selecting an unstable non-sink vertex $u$ and moving $d_G(u)$ grains of sand from $u$ to its neighbors, in which each neighbor $v$ receives $m_{(u,v)}$ grains of sand, where $m_{(u,v)}$ denote the number of edges between $u$ to $v$.
Note toppling vertex $v_i$ in configuration ${c}$ corresponds to the subtraction the $i$-{\it th} row of the Laplacian matrix to ${c}$. 
Recall the Laplacian matrix $L(G)$ of a graph $G$ is defined such that the $(u,v)$-entry of $L(G)$ is defined as 
\[
L(G)_{u,v}=
\begin{cases}
\deg_G(u) & \text{if } u=v,\\
-m(u,v) & \text{otherwise.}
\end{cases}
\]
Starting with any unstable configuration and toppling unstable vertices repeatedly, we will always obtain \cite[Theorem 2.2.2]{Klivans} a stable and unique configuration after a finite sequence of topplings.
The stable configuration obtained from the configuration ${ c}$ will be denoted by $s({ c})$.
The sum of two configurations ${ c}$ and ${ d}$ is performed entry by entry.
A configuration ${ c}$ is \textit{recurrent} if there exists a non-zero configuration ${ d}$ such that ${ c}=s({ d}+{ c})$.
Let ${ c}\oplus { d}:=s({ c}+{ d})$.
Recurrent configurations play a central role in the dynamics of the Abelian sandpile model since recurrent configurations together with the $\oplus$ operation form an Abelian group known as \textit{sandpile group} \cite[Chapter 4]{Klivans}.
In the following $K(G)$ denote the sandpile group of $G$.
One of the interesting features of the sandpile group of connected graphs is that the order $|K(G)|$ is equal to the number $\tau(G)$ of spanning trees of the graph $G$.

The sandpile group has been studied under different names, for example: {\it chip-firing game} \cite{biggs,merino}, {\it critical group} \cite{biggs,CY}, {\it group of components} \cite{lorenzini2008}, {\it Jacobian group} \cite{bhn,biggs}, {\it Laplacian unimodular equivalence} \cite{merris}, {\it Picard group} \cite{bhn,biggs}, or {\it sandpile group} \cite{alfaval0,CR}.
We recommend the reader the book \cite{Klivans} which is an excellent reference on the theory of chip-firing game and its relations with other combinatorial objects like rotor-routing, hyperplane arrangements, parking functions and dominoes. 
In particular, the properties of the sandpile configurations are explained in detail there.
On the other hand, the Abelian sandpile model was the first example of a {\it self-organized critical system}, which attempts to explain the occurrence of power laws in many natural phenomena ranging on different fields like geophysics, optimization, economics and neuroscience.
A nice exposure to self-organized-critically is provided in the book~\cite{ubi}.

Two matrices $M$ and $N$ are said to be {\it equivalent} if there exist $P,Q\in GL_n(\mathbb{Z})$ such that $N=PMQ$, and denoted by $N\sim M$.
Given a square integer matrix $M$, the Smith normal form (SNF) of $M$ is the unique equivalent diagonal matrix $\diag(d_1,d_2,\dots,d_n )$ whose non-zero entries are non-negative and satisfy $d_i$ divides $d_{i+1}$. 
The diagonal elements of the SNF are known as {\it invariant factors}.
In \cite{stanley}, Stanley surveys the influence of the SNF in combinatorics.
In our context the SNF is relevant since the sandpile group is isomorphic to the torsion part of the cokernel of the Laplacian matrix of $G$ \cite[Chapter 4]{Klivans}, and the SNF of a matrix is a standard technique to determine the structure of cokernel.
This is because if $N\sim M$, then $\coker(M)=\mathbb{Z}^n/{\rm Im} M\cong\mathbb{Z}^n/{\rm Im} N=\coker(N)$.
In particular, the fundamental theorem of finitely generated Abelian groups states
$\coker(M)\cong \mathbb{Z}_{d_1} \oplus \mathbb{Z}_{d_2} \oplus \cdots \oplus\mathbb{Z}_{d_{r}} \oplus \mathbb{Z}^{n-r}$, where $r$ is the rank of $M$. 
The minimal number of generators of the torsion part of the cokernel of $M$ equals the number of positive invariant factors of $\SNF(M)$. 
Let $f_1(G)$ and $\phi(G)$ denote the number of invariant factors of $L(G)$ equal to 1 and the minimal number of generators of $K(G)$, respectively.
If $G$ is a graph with $n$ vertices and $c$ connected components, then $n-c=f_1(G)+\phi(G)$.

It is important to note that the algebraic structure of the sandpile group does not depend on the sink vertex, meanwhile the combinatorial structure depicted by the recurrent configurations of $G$ do depends on the sink vertex.

At the beginning, it was found \cite{lorenzini2008,wagner} that many graphs have cyclic group from which was conjectured that almost all graphs have cyclic sandpile group.
However, it was found in \cite{wood} that the probability that the sandpile group of a random graph is cyclic is asymptotically at most $0.7935212$. 
Still, it was proved \cite{CY} that for any given connected simple graph, there is an homeomorphic graph with cyclic sandpile group.
Recall, we say that two graphs $G_1$ and $G_2$ are in the same {\it homeomorphism class} if there exists a graph $G$ that is a subdivision of both $G_1$ and $G_2$.

The following lemma is convenient in many situations to compute the invariant factors of a matrix $M$.
\begin{lemma}
For $k\in[\rank(M)]$, let $\Delta_k(M)$ be the $\gcd$ of the $k$-minors of matrix $M$, and $\Delta_0(M)=1$.
Then the $k$-{\it th} invariant factor $d_k(M)$ of $M$ equals
\[\frac{\Delta_k(M)}{\Delta_{k-1}(M)}.
\]
\end{lemma}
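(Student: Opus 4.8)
The plan is to prove the standard characterization of invariant factors via determinantal divisors, namely that the product $d_1 d_2 \cdots d_k$ equals the gcd $\Delta_k(M)$ of all $k \times k$ minors. Once this telescoping identity is established, the claimed formula $d_k = \Delta_k / \Delta_{k-1}$ follows immediately. The key tool is that the gcd of the $k$-minors is invariant under the equivalence relation $M \sim PMQ$ with $P, Q \in GL_n(\mathbb{Z})$, since this quantity is exactly what the Smith normal form computes in diagonal form.

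First I would verify that $\Delta_k$ is an equivalence invariant. The crucial observation is the Cauchy--Binet formula: every $k \times k$ minor of $PM$ is an integer linear combination of the $k \times k$ minors of $M$, so $\Delta_k(M) \mid \Delta_k(PM)$; applying the same argument to $P^{-1}(PM)$ gives the reverse divisibility, whence $\Delta_k(PM) = \Delta_k(M)$ (up to sign/units, which is irrelevant for a gcd). The analogous statement holds for right multiplication by $Q$, using minors of $MQ$ as combinations of minors of $M$. Combining these, $\Delta_k(M) = \Delta_k(PMQ)$ for all $P, Q \in GL_n(\mathbb{Z})$, so in particular $\Delta_k(M) = \Delta_k(\SNF(M))$.

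Next I would evaluate $\Delta_k$ on the Smith normal form directly. If $\SNF(M) = \diag(d_1, \dots, d_n)$ with $d_1 \mid d_2 \mid \cdots$, then every nonzero $k \times k$ minor of a diagonal matrix is a product of $k$ of the diagonal entries (minors using rows and columns with mismatched index sets vanish). Because of the divisibility chain, the smallest such product is $d_1 d_2 \cdots d_k$, and every other nonzero $k$-minor is a multiple of it. Hence $\Delta_k(\SNF(M)) = d_1 d_2 \cdots d_k$, and combined with the invariance from the previous step we obtain $\Delta_k(M) = d_1 \cdots d_k$ for all $k \le \rank(M)$. Setting $\Delta_0 = 1$ as stipulated, the ratio telescopes to give $d_k = \Delta_k(M) / \Delta_{k-1}(M)$.

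The main obstacle is the Cauchy--Binet step: one must be careful that it yields divisibility of gcds in both directions rather than mere equality of individual minors, and that invertibility of $P$ over $\mathbb{Z}$ (so that $P^{-1}$ is also an integer matrix) is what powers the reverse divisibility. Everything else is routine once this invariance is in place; the evaluation on the diagonal form is elementary, relying only on the divisibility ordering of the invariant factors to identify $d_1 \cdots d_k$ as the gcd of all $k$-fold products of the diagonal entries.
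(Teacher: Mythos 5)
Your argument is correct and complete: the Cauchy--Binet step does give divisibility of the gcds in both directions (using that $P^{-1},Q^{-1}$ are integer matrices), the evaluation $\Delta_k(\SNF(M))=d_1\cdots d_k$ is right, and restricting to $k\le\rank(M)$ keeps $\Delta_{k-1}\neq 0$ so the ratio is well defined. The paper itself offers no proof of this lemma --- it is stated as a classical fact about the Smith normal form --- and what you have written is exactly the standard derivation one would supply, so there is nothing to contrast.
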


This relation inspired H. Corrales and C. Valencia to introduce in \cite{corrval} the critical ideals of a graph, which are determinantal ideals generalizing the sandpile group and their varieties generalize the spectrum of the graph.
Let $A(G)$ be the adjacency matrix of the graph $G$ with $n$ vertices. Let $A_X(G)=\diag(x_1,\dots,x_n)-A(G)$, where the indeterminates of $X=(x_1,\dots,x_n)$ are associated with the vertices of $G$.
For $k\in[n]$, the $k$-{\it th critical ideal} $I_k(G)$ of $G$ is the ideal in $\mathbb{Z}[X]$ generated by the $k$-minors of the matrix $A_X(G)$. 
Note the evaluation of the $k$-{\it th} critical ideal of $G$ at $X=\deg(G)$ will be an ideal in $\mathbb{Z}$ generated by $\Delta_k(L(G))$.
We will show a new application of the critical ideals for computing the sandpile group of planar graph.

When the graph is connected, it is convenient to compute the cokernel of a reduced Laplacian matrix since it is full rank.
The {\it reduced Laplacian matrix} $L_k(G)$ for a connected graph $G$ is the $(n-1)\times(n-1)$ matrix obtained by deleting the row and column $k$ from $L(G)$.
There are $n$ different reduced Laplacian matrices and $K(G)\cong \coker(L_k(G))$ and $|K(G)|=\det(L_k(G))=\tau(G)$ for any $k\in[n]:=\{1,\dots,n\}$, see details in \cite{biggs}.

We will use $G^*$ to denote the {\it dual} of a plane graph $G$, and the {\it weak dual}, denoted by $G_*$, is constructed the same way as the dual graph, but without placing the vertex associated with the outer face.
It is known \cite{Berman,CR,vince} that the sandpile group of a planar graph is isomorphic to the sandpile group of its dual.
Since the dual of any plane graph is connected \cite{BM}, then $K(G)\cong\coker(L_k(G^*))$ and $\tau(G)=\det(L_k(G^*))$.

In \cite{Phifer}, C. Phifer gave a nice interpretation of this relation by introducing the {\it cycle-intersection matrix} of a plane graph as follows.
Given a plane graph $G$ with $s$ interior faces $F_1,\dots,F_s$, let $c(F_i)$ denote the length of the cycle which bounds interior face $F_i$. 
We define the
{\it cycle-intersection matrix}, $C(G) = (c_{ij})$ to be a symmetric matrix of size $s\times s$, where $c_{ii}=c(F_i)$, and $c_{ij}$ is the
negative of the number of common edges in the cycles bounding the interior faces $F_i$ and $F_j$, for
$i\neq j$.
This matrix differs from the {\it fundamental circuits intersection matrix} used in \cite{CY}.
Note that $C(G)$ is the reduced Laplacian of $G^*$ where the column and row associated with the outer face are removed from $L(G^*)$.
Therefore we have the following. 
\begin{lemma}\label{lemma:lemmacycleintersection}
Let $G$ be a plane graph.
Then $K(G)\cong\coker(C(G))$ and $\tau(G)=\det(C(G))$.
\end{lemma}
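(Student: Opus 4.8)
The plan is to reduce both claims to the single structural fact that the cycle-intersection matrix $C(G)$ is literally one of the reduced Laplacians of the dual graph $G^*$, after which everything follows from the relations recorded just before the statement. Write $F_1,\dots,F_s$ for the interior faces of $G$ and let $q$ denote the vertex of $G^*$ corresponding to the outer face $F_0$. Then $L_q(G^*)$ is the $s\times s$ matrix indexed by $F_1,\dots,F_s$ obtained by deleting the row and column $q$ from $L(G^*)$. First I would establish $C(G)=L_q(G^*)$, and then conclude: since $K(G)\cong K(G^*)$ and $G^*$ is connected we have $K(G^*)\cong\coker(L_q(G^*))$, whence $K(G)\cong\coker(C(G))$; likewise, specializing $k=q$ in the identity $\tau(G)=\det(L_k(G^*))$ already noted gives $\tau(G)=\det(L_q(G^*))=\det(C(G))$.

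The substance of the argument is the entry-by-entry identification of $C(G)$ with $L_q(G^*)$. Recall that $G^*$ has one edge for each edge of $G$, joining the two faces that the original edge separates. For distinct interior faces $F_i,F_j$, the number $m(F_i,F_j)$ of parallel edges between them in $G^*$ is exactly the number of edges lying on the boundary of both $F_i$ and $F_j$ in $G$; hence the off-diagonal Laplacian entry $(L(G^*))_{F_i,F_j}=-m(F_i,F_j)$ equals $c_{ij}$, the negative of the number of common edges. For the diagonal, the degree of $F_i$ in $G^*$ counts each edge incident to $F_i$, that is, each edge on the boundary of $F_i$; since $F_i$ is bounded by a cycle of length $c(F_i)$, this degree is precisely $\deg_{G^*}(F_i)=c(F_i)=c_{ii}$. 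As the diagonal and off-diagonal entries agree with those of $C(G)$ and both matrices are indexed by $F_1,\dots,F_s$, we obtain $C(G)=L_q(G^*)$. (Note that a boundary edge shared by $F_i$ and the outer face contributes to $\deg_{G^*}(F_i)$, hence to $c(F_i)$, but feeds only the deleted column $q$; this is why the rows of $C(G)$ need not sum to zero.)

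The step requiring care is the diagonal identification, as this is exactly where the hypothesis that each interior face is bounded by a \emph{cycle} is used. In general the boundary of a face is only a closed walk, and an edge traversed twice by that walk (a bridge incident to $F_i$) would produce a self-loop at $F_i$ in $G^*$ rather than a contribution to its degree, breaking the equality $\deg_{G^*}(F_i)=c(F_i)$. Under the stated convention that $c(F_i)$ is the length of the cycle bounding $F_i$, the boundary walk traverses each incident edge once, so no self-loops arise and the degree equals the cycle length. Once $C(G)=L_q(G^*)$ is in hand, both conclusions are immediate from the identities $K(G)\cong\coker(L_k(G^*))$ and $\tau(G)=\det(L_k(G^*))$ established above, applied with $k=q$.
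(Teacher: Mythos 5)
Your proof is correct and follows essentially the same route as the paper, which simply observes that $C(G)$ is the reduced Laplacian $L_q(G^*)$ obtained by deleting the row and column of the outer-face vertex and then invokes $K(G)\cong K(G^*)\cong\coker(L_k(G^*))$ and $\tau(G)=\det(L_k(G^*))$. You additionally spell out the entry-by-entry identification of $C(G)$ with $L_q(G^*)$ (including the role of the cycle-boundary hypothesis), which the paper leaves implicit.
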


Recently, the structure of the sandpile group of some subfamilies of the outerplanar graphs were established, see for example \cite{BG,cm,Kerpkiy}.
Also, the Tutte polynomial and the number of spanning tress of an infinite families of outerplanar, small-world and self-similar graphs were obtained in \cite{CMLZ,lfh}.
Despite this, the algebraic structure of the sandpile groups of the outerplanar graphs have been largely unknown.

In Section~\ref{sec:critgi}, we explore the relation obtained in Lemma~\ref{lemma:lemmacycleintersection} under the lenses of the critical ideals of graphs.
Then, we give a methodology to compute the algebraic structure of the sandpile groups of the plane graph family $\mathcal{F}$ that have a common weak dual. 
This method consists in evaluating the indeterminates of the critical ideals of the weak dual at the lengths of the cycles bounding the interior faces of the plane graph in $\mathcal{F}$.
In Section~\ref{sec:outerplanar}, we use this method and the property that the weak dual of outerplane graphs are trees, which was suggested by Chen and Mohar in \cite{cm}, to compute the sandpile groups of outerplanar graphs.
This result rely on previous results obtained by Corrales and Valencia in \cite{corrval1}.
Finally, in Section~\ref{sec:identity}, we compute the identity configuration for the sandpile groups of the dual graphs of many outerplane graphs.

\section{Sandpile groups of planar graphs}\label{sec:critgi}

In this section we will introduce a procedure that can be applied to compute the algebraic structure of the sandpile groups of the family of plane graphs that have a common weak dual graph in terms of the critical ideals of the common weak dual graph and the lengths of the cycles bounding the interior faces of a plane embedding.

The basic properties about critical ideals and determinantal ideals of graphs can be found in \cite{akm,corrval}, and in \cite{alflin} can be found other applications of the critical ideals not considered there.
Next we state few properties of the critical ideals. 
By convention $I_k(G)=\left< 1\right>$ if $k<1$, and $I_k(G)=\left< 0\right>$ if $k>n$.
An ideal is called {\it trivial} or {\it unit} if it is equal to $\lrangle{1}$.
The {\it algebraic co-rank} of $G$, denoted by $\gamma(G)$, is the number of critical ideals of $G$ equal to $\left<1\right>$.
It is known that if $i\leq j$, then $I_j(G) \subseteq I_i(G)$.
Furthermore, if $H$ is an induced subgraph of $G$, then $I_i(H)\subseteq I_i(G)$, from which follows that $\gamma(H)\leq \gamma(G)$.

The classic relation between critical ideals and the invariant factors of the sandpile groups of graphs are depicted by the following results.
First, we recall an alternative way to compute the invariant factors of integer matrices derived from the adjacency matrix.

\begin{lemma}\label{prop:evalmultiplevariables}\cite[Proposition 14]{akm}
Let $G$ be a graph with $n$ vertices and the indeterminates of $X=(x_1,\dots,x_n)$ are associated with the vertices of $G$. 
Let $M={a}I_n-A(G)$, where ${a}\in \mathbb{Z}^n$.
Then, the ideal in $\mathbb{Z}$ obtained from the evaluation of $I_k(G)$ at $X={a}$ is generated by $\Delta_k(M)$, that is, the $\gcd$ of the $k$-minors of the matrix $M$.
\end{lemma}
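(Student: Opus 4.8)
The plan is to exploit that forming a $k$-minor is a polynomial operation with integer coefficients, so it commutes with the evaluation homomorphism, after which the statement reduces to the fact that $\mathbb{Z}$ is a principal ideal domain. Let $\varphi\colon \mathbb{Z}[X]\to\mathbb{Z}$ be the ring homomorphism determined by $x_i\mapsto a_i$ for each $i\in[n]$. Since the entries of $A(G)$ are integer constants fixed by $\varphi$ and $\varphi(x_i)=a_i$, applying $\varphi$ entrywise to $A_X(G)=\diag(x_1,\dots,x_n)-A(G)$ returns exactly $M=\diag(a_1,\dots,a_n)-A(G)$.

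The first step I would record is the compatibility of minors with $\varphi$. Fixing index sets $R,C\subseteq[n]$ with $|R|=|C|=k$, write $A_X(G)[R,C]$ for the corresponding $k\times k$ submatrix. By the Leibniz formula, $\det A_X(G)[R,C]$ is an integer polynomial in the entries of that submatrix, and since $\varphi$ is a ring homomorphism it commutes with this polynomial, giving
\[
\varphi\big(\det A_X(G)[R,C]\big)=\det\big(\varphi(A_X(G)[R,C])\big)=\det M[R,C].
\]
Thus evaluating a $k$-minor of $A_X(G)$ at $X=a$ yields the corresponding $k$-minor of $M$, and as $R,C$ range over all $k$-subsets of $[n]$, the evaluated generators of $I_k(G)$ are precisely the $k$-minors of $M$.

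Next I would lift this back to the level of ideals. The ideal $I_k(G)$ is generated by the $k$-minors of $A_X(G)$, and because $\varphi$ is surjective onto $\mathbb{Z}$, the image of an ideal is the ideal generated by the images of any generating set; hence the evaluation of $I_k(G)$ at $X=a$ is the ideal of $\mathbb{Z}$ generated by the $k$-minors of $M$. Finally, since $\mathbb{Z}$ is a principal ideal domain, the ideal generated by any collection of integers is generated by their $\gcd$, so this ideal equals $\lrangle{\Delta_k(M)}$, which is the claim.

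The entire content lies in the interchange of evaluation and determinant, so there is no genuine obstacle: it follows immediately from the Leibniz expansion together with the functoriality of $\varphi$. The only points needing care are bookkeeping ones, namely verifying that $\varphi$ sends $A_X(G)$ to $M$ entrywise, and that ``evaluation of the ideal'' (the image under $\varphi$) coincides with ``the ideal generated by the evaluated minors,'' which holds precisely because $\varphi$ is surjective.
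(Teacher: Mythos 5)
Your proof is correct. The paper does not actually prove this lemma --- it is imported verbatim as Proposition~14 of the cited reference \cite{akm} --- so there is no in-paper argument to compare against; your evaluation-homomorphism argument (minors commute with the ring map $\varphi\colon\mathbb{Z}[X]\to\mathbb{Z}$ by the Leibniz formula, the image of the ideal under the surjection $\varphi$ is generated by the images of the generators, and $\mathbb{Z}$ being a PID collapses that ideal to $\lrangle{\Delta_k(M)}$) is exactly the standard proof of that cited result, with all the relevant bookkeeping (that $\varphi$ sends $A_X(G)$ to $M$ entrywise, and that ``evaluating the ideal'' means taking the ideal generated by the evaluated minors) handled correctly.
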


This result is convenient since the $k$-{\it th} invariant factor $d_k(M)$ of the SNF of $M$ is equal to $\frac{\Delta_k(M)}{\Delta_{k-1}(M)}$.
In particular, we can apply Lemma~\ref{prop:evalmultiplevariables} to the Laplacian matrix and reduced Laplacian matrix to give a method to compute the sandpile groups of some families of graphs.

\begin{proposition}\cite{corrval}
\label{cor:evaluations}
Let $G$ be a graph with vertex set $\{v_1,\dots,v_n\}$. 
Then,
\begin{enumerate}
    \item if $ \deg(G)=(\deg_G(v_1),\dots,\deg_G(v_n))$, then the $k$-{\it th} critical ideal of $G$ evaluated at $X=\deg(G)$ is generated by $\Delta_k(L(G))$, and $\gamma(G)\leq f_1(G)$,
    \item let $H$ be the graph constructed from $G$ by adding a new vertex $v_{n+1}$, and let ${ m}\in \mathbb{N}^n$, where ${ m}_i$ is the number of edges between $v_{n+1}$ and $v_i$, then the $k$-{\it th} critical ideal of $G$ evaluated at $X=\deg(G)+{ m}$ is generated by $\Delta_k(L_{n+1}(H))$, and $\gamma(G)\leq f_1(H)$.
\end{enumerate}
\end{proposition}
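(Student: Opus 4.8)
The plan is to reduce both parts to the evaluation lemma (Lemma~\ref{prop:evalmultiplevariables}) by exhibiting the correct integer matrix $M=\diag(a)-A(G)$ in each case, and then to translate the triviality of critical ideals into the vanishing (i.e.\ equality to $1$) of invariant factors.

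For the first part, I would begin by observing that the Laplacian splits as $L(G)=\diag(\deg(G))-A(G)$, which is exactly $A_X(G)=\diag(x_1,\dots,x_n)-A(G)$ evaluated at $X=\deg(G)$. Taking $a=\deg(G)$ in Lemma~\ref{prop:evalmultiplevariables} then gives directly that the evaluation of $I_k(G)$ at $X=\deg(G)$ is generated by $\Delta_k(L(G))$. For the inequality $\gamma(G)\le f_1(G)$, I would use the nesting $I_j(G)\subseteq I_i(G)$ for $i\le j$ to note that the trivial critical ideals form an initial segment $I_1(G),\dots,I_{\gamma(G)}(G)$. Evaluating a unit ideal at any integer point still yields $\lrangle{1}$, so $\Delta_k(L(G))=1$ for every $k\le\gamma(G)$; since $\Delta_0=1$ as well, the identity $d_k=\Delta_k/\Delta_{k-1}$ forces the first $\gamma(G)$ invariant factors of $L(G)$ to equal $1$, whence $f_1(G)\ge\gamma(G)$.

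For the second part, the computation to carry out first is that deleting the row and column of the new vertex from $L(H)$ gives $L_{n+1}(H)=\diag(\deg(G)+{m})-A(G)$: the diagonal entry at $v_i$ is $\deg_H(v_i)=\deg_G(v_i)+{m}_i$, while the off-diagonal entries among $v_1,\dots,v_n$ are inherited unchanged from $G$. This is $A_X(G)$ evaluated at $X=\deg(G)+{m}$, so Lemma~\ref{prop:evalmultiplevariables} with $a=\deg(G)+{m}$ yields the first assertion, and the argument of the previous paragraph gives $\Delta_k(L_{n+1}(H))=1$ for $k\le\gamma(G)$; that is, the reduced Laplacian has at least $\gamma(G)$ invariant factors equal to $1$.

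The main obstacle is the final bridge $\gamma(G)\le f_1(H)$, since $f_1(H)$ is defined through the \emph{full} Laplacian $L(H)$ whereas the evaluation only controls the \emph{reduced} matrix $L_{n+1}(H)$. Here I would invoke the standard fact that for a connected graph the nonzero invariant factors of $L(H)$ coincide with those of any reduced Laplacian: indeed $\coker(L(H))\cong\mathbb{Z}\oplus\coker(L_{n+1}(H))$, so by uniqueness of the Smith normal form the $\SNF$ of $L(H)$ is obtained from that of $L_{n+1}(H)$ by appending a single $0$, and the two lists of nonzero invariant factors are identical. Consequently the number of invariant factors equal to $1$ agrees for $L(H)$ and $L_{n+1}(H)$, and the bound established on the reduced matrix transfers to give $f_1(H)\ge\gamma(G)$.
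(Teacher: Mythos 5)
Your argument follows the paper's proof essentially verbatim: both parts reduce to Lemma~\ref{prop:evalmultiplevariables} by identifying the evaluation of $A_X(G)$ at $X=\deg(G)$ with $L(G)$ and at $X=\deg(G)+m$ with $L_{n+1}(H)$, and both derive the inequalities on $f_1$ from the observation that a unit critical ideal evaluates to $\lrangle{1}$, so $\Delta_k=1$ for all $k\le\gamma(G)$ and the quotients $\Delta_k/\Delta_{k-1}$ give trivial invariant factors. The one place you go beyond the paper is the bridge in part~(2) from the reduced Laplacian $L_{n+1}(H)$ to $f_1(H)$, which is defined via the full Laplacian; the paper passes over this silently, and your identification $\coker(L(H))\cong\mathbb{Z}\oplus\coker(L_{n+1}(H))$ handles it correctly, but only under a connectivity hypothesis on $H$ that the statement does not impose (it can fail to hold as stated if, say, $G$ is disconnected or $m=0$). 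A cleaner bridge needing no such hypothesis: every $k$-minor of $L_{n+1}(H)$ is also a $k$-minor of $L(H)$, so $\Delta_k(L(H))$ divides $\Delta_k(L_{n+1}(H))$; hence $\Delta_k(L_{n+1}(H))=1$ for $k\le\gamma(G)$ already forces the first $\gamma(G)$ invariant factors of $L(H)$ to equal $1$, i.e.\ $f_1(H)\ge\gamma(G)$.
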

\begin{proof}
It follows from Lemma~\ref{prop:evalmultiplevariables}, note that in case (1) the evaluation of $A_X(G)$ at $X=\deg(G)$ equals $L(G)$. Moreover, note that $\Delta_j(L(G))=1$ for all $1\leq j\leq \gamma(G)$, therefore the first $\gamma(G)$ invariant factors are $1$. 
In case (2) the evaluation of $A_X(G)$ at $X=\deg(G)+m$ equals $L_{n+1}(H)$ and similarly to case (1) we have that $ f_1(H)\geq \gamma(G)$ 
\end{proof}

The next example will illustrate how the critical ideals can be used to compute the sandpile group of the family of graphs obtained from a graph $G$ by adding a new vertex $v$ with an arbitrary number of edges between $v$ and the vertices of $G$.

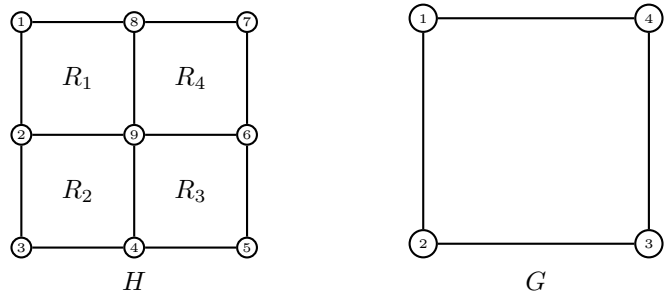
\begin{figure}[ht]
    \centering
    \begin{tabular}{c@{\extracolsep{2cm}}c}
    \begin{tikzpicture}[scale=1.5,thick]
		\tikzstyle{every node}=[minimum width=0pt, inner sep=1pt, circle]
			\draw (0,0) node[draw] (0) { \tiny 9};
			\draw (-1,1) node[draw] (1) { \tiny 1};
			\draw (-1,0) node[draw] (2) { \tiny 2};
			\draw (-1,-1) node[draw] (3) { \tiny 3};
			\draw (0,-1) node[draw] (4) { \tiny 4};
			\draw (1,-1) node[draw] (5) { \tiny 5};
			\draw (1,0) node[draw] (6) { \tiny 6};
			\draw (1,1) node[draw] (7) { \tiny 7};
			\draw (0,1) node[draw] (8) { \tiny 8};
			\draw (-0.5,0.5) node[] (9) { $R_1$};
			\draw (-0.5,-0.5) node[] (10) { $R_2$};
			\draw (0.5,-0.5) node[] (11) { $R_3$};
			\draw (0.5,0.5) node[] (12) { $R_4$};
			\draw  (0) edge (4);
			\draw  (0) edge (2);
			\draw  (1) edge (2);
			\draw  (2) edge (3);
			\draw  (3) edge (4);
			\draw  (4) edge (5);
			\draw  (5) edge (6);
			\draw  (6) edge (7);
			\draw  (6) edge (0);
			\draw  (7) edge (8);
			\draw  (8) edge (1);
			\draw  (8) edge (0);
		\end{tikzpicture}
    & 
    \begin{tikzpicture}[scale=3,thick]
		\tikzstyle{every node}=[minimum width=0pt, inner sep=2pt, circle]
			\draw (-0.5,0.5) node[draw] (9) { \tiny 1};
			\draw (-0.5,-0.5) node[draw] (10) { \tiny 2};
			\draw (0.5,-0.5) node[draw] (11) { \tiny 3};
			\draw (0.5,0.5) node[draw] (12) { \tiny 4};
			\draw  (9) edge (12);
			\draw  (10) edge (9);
			\draw  (11) edge (10);
			\draw  (12) edge (11);
		\end{tikzpicture}\\
		$H$ & $G$
    \end{tabular}
    \caption{A plane graph $H$ with 4 interior faces and its weak dual $G$.}
    \label{fig:exa}
\end{figure}

\begin{example}\label{exa:coneC8}
Let $H$ be the plane graph shown in Figure~\ref{fig:exa}.
Let $C_8$ be the cycle with 8 vertices obtained from $H$ by removing vertex $v_9$ and the edges incident to it.
The algebraic co-rank of $C_8$ is 6, and for the next critical ideal we will give their Gr\"obner bases since we need a simple basis that describe the ideal.
The Gr\"obner basis of the 7-th critical ideal of $C_8$ is generated by the following 3 polynomials:
\begin{eqnarray*}
p_1=x_1 + x_3 x_4 x_5 x_6 x_7 - x_3 x_4 x_5 - x_3 x_4 x_7 - x_3 x_6 x_7 + x_3 - x_5 x_6 x_7 + x_5 + x_7,\\
p_2=x_2 + x_4 x_5 x_6 x_7 x_8 - x_4 x_5 x_6 - x_4 x_5 x_8 - x_4 x_7 x_8 + x_4 - x_6 x_7 x_8 + x_6 + x_8,\\
p_3=x_3 x_4 x_5 x_6 x_7 x_8 - x_3 x_4 x_5 x_6 - x_3 x_4 x_5 x_8 - x_3 x_4 x_7 x_8 + \\
x_3 x_4 - x_3 x_6 x_7 x_8 + x_3 x_6 + x_3 x_8 - x_5 x_6 x_7 x_8 + x_5 x_6 + x_5 x_8 + x_7 x_8.
\end{eqnarray*}
The 8-th critical ideal of $C_8$ is generated by the determinant of $A_X(C_8)$:
\begin{eqnarray*}\tiny 
 x_1  x_2  x_3  x_4  x_5  x_6  x_7  x_8 -  x_1  x_2  x_3  x_4  x_5  x_6 -  x_1  x_2  x_3  x_4  x_5  x_8 -  x_1  x_2  x_3  x_4  x_7  x_8 \\
 +  x_1  x_2  x_3  x_4 -  x_1  x_2  x_3  x_6  x_7  x_8 +  x_1  x_2  x_3  x_6 +  x_1  x_2  x_3  x_8 -  x_1  x_2  x_5  x_6  x_7  x_8 \\
 +  x_1  x_2  x_5  x_6 +  x_1  x_2  x_5  x_8 +  x_1  x_2  x_7  x_8 -  x_1  x_2 -  x_1  x_4  x_5  x_6  x_7  x_8 +  x_1  x_4  x_5  x_6 \\
 +  x_1  x_4  x_5  x_8 +  x_1  x_4  x_7  x_8 -  x_1  x_4 +  x_1  x_6  x_7  x_8 -  x_1  x_6 -  x_1  x_8 -  x_2  x_3  x_4  x_5  x_6  x_7 \\
 +  x_2  x_3  x_4  x_5 +  x_2  x_3  x_4  x_7 +  x_2  x_3  x_6  x_7 -  x_2  x_3 +  x_2  x_5  x_6  x_7 -  x_2  x_5 -  x_2  x_7 \\
 -  x_3  x_4  x_5  x_6  x_7  x_8 +  x_3  x_4  x_5  x_6 +  x_3  x_4  x_5  x_8 +  x_3  x_4  x_7  x_8 -  x_3  x_4 +  x_3  x_6  x_7  x_8 -  x_3  x_6 \\
 -  x_3  x_8 +  x_4  x_5  x_6  x_7 -  x_4  x_5 -  x_4  x_7 +  x_5  x_6  x_7  x_8 -  x_5  x_6 -  x_5  x_8 -  x_6  x_7 -  x_7  x_8.
\end{eqnarray*}
In particular, by evaluating the polynomials $p_1,p_2,p_3$ and $\det(A_X(C_8))$ at $X=\deg(C_8)+(0,1,0,1,0,1,0,1)$, we obtain $\Delta_7(L_9(H))=\gcd(32,48,72)=8$, and $\Delta_8(L_9(H))=192$.
From which follows that the sandpile group $K(H)$ is isomorphic to $\mathbb{Z}_{8}\oplus\mathbb{Z}_{24}$.
\end{example}

The Gr\"obner basis for
the critical ideals of the complete graphs, the cycles and the paths were computed in \cite{corrval}.
In \cite{corrval1}, it was given a description of the generators of the $k$-{\it th}-critical ideal of any tree in terms of a set of special 2-matchings.
The generators of the critical ideals of other graph families have been computed in \cite{al,acv,yibo}.

A new relation is explored next based on the cycle-intersection matrix $C(H)$ of a plane graph $H$.

\begin{theorem}\label{thm:criticalideals_ciclematrix}
Let $G$ be a graph with vertex set $\{v_1,\dots,v_n\}$. 
If $G$ is the weak dual of the plane graph $H$ and ${ c}\in \mathbb{N}^n$ is such that ${ c}_i$ is the length of the cycle bounding the $i$-{\it th} finite face, then the ideal in $\mathbb{Z}$ obtained from the evaluation of $I_k(G)$ at  $X={ c}$ is generated by $\Delta_k(C(H))$.
And $f_1(C(H))\geq\gamma(G)$.
\end{theorem}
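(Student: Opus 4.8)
The theorem is the cycle-intersection analogue of Proposition~\ref{cor:evaluations}, and the entire structure of the argument should mirror it. The essential observation is that $C(H)$, the cycle-intersection matrix of $H$, is exactly the result of evaluating the symbolic matrix $A_X(G)$ of the weak dual $G$ at $X = c$. So my plan is to reduce the statement to the already-established Lemma~\ref{prop:evalmultiplevariables} by identifying the correct evaluation vector $a$.

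**The plan.** First I would write down $A_X(G) = \diag(x_1,\dots,x_n) - A(G)$ and compare it entry-by-entry with $C(H)$. The off-diagonal entries match directly: the $(i,j)$-entry of $-A(G)$ is $-1$ precisely when faces $F_i$ and $F_j$ of $H$ are adjacent in the weak dual, i.e. when their bounding cycles share an edge, and this is exactly the off-diagonal entry $c_{ij}$ of $C(H)$ (here I would note the simplifying assumption, implicit in Phifer's definition, that adjacent interior faces of $H$ share exactly one edge, so that $-A(G)_{ij} = c_{ij}$; if multiple shared edges are allowed one must instead use a weighted adjacency matrix of $G$). For the diagonal, the $(i,i)$-entry of $A_X(G)$ is $x_i$, while the diagonal of $C(H)$ is $c(F_i)$, the length of the bounding cycle. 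Hence evaluating $A_X(G)$ at $X = c$, where $c_i = c(F_i)$, yields precisely $C(H)$. This is the crux and the only place where the geometry of the plane graph enters.

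**Applying the lemma.** Once the identity $A_X(G)|_{X=c} = C(H)$ is established, I would set $M = C(H) = c\,I_n - A(G)$ and invoke Lemma~\ref{prop:evalmultiplevariables} with $a = c \in \mathbb{Z}^n$. The lemma states that the ideal in $\mathbb{Z}$ obtained from evaluating $I_k(G)$ at $X = c$ is generated by $\Delta_k(M) = \Delta_k(C(H))$, which is exactly the first conclusion. The second conclusion, $f_1(C(H)) \geq \gamma(G)$, then follows by the same reasoning as in Proposition~\ref{cor:evaluations}: by definition $\gamma(G)$ is the number of critical ideals $I_j(G)$ equal to $\langle 1\rangle$, and for each such $j \leq \gamma(G)$ the evaluation gives $\langle \Delta_j(C(H))\rangle = \langle 1\rangle$, forcing $\Delta_j(C(H)) = 1$; since invariant factors are ratios $\Delta_j/\Delta_{j-1}$ (Lemma~1), the first $\gamma(G)$ invariant factors of $C(H)$ equal $1$, whence $f_1(C(H)) \geq \gamma(G)$.

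**Where the difficulty lies.** The algebraic half is essentially automatic given Lemma~\ref{prop:evalmultiplevariables}; the real content — and the step I would check most carefully — is the matrix identification $A_X(G)|_{X=c} = C(H)$. This requires confirming that the weak dual $G$ is precisely the graph whose adjacency encodes edge-sharing of interior faces, and handling the convention for faces sharing more than one edge. Provided the weak dual is simple and interior faces meet in at most one edge (which holds for the outerplane graphs that are the paper's target application, since their weak duals are trees), the identification is clean and the theorem drops out immediately. The main obstacle is therefore conceptual bookkeeping rather than technical: correctly matching the combinatorial data of the plane embedding $H$ to the symbolic matrix of its weak dual $G$.
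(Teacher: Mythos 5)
Your proposal is correct and follows essentially the same route as the paper: both arguments come down to recognizing $C(H)$ as the integer evaluation $A_X(G)|_{X=c}$ and then invoking Lemma~\ref{prop:evalmultiplevariables}. The paper packages this by noting $C(H)=L_{n+1}(H^*)$ and $c=\deg(G)+m$ (with $m_i$ the number of edges between face $F_i$ and the outer face) so as to cite Proposition~\ref{cor:evaluations}.2 directly, which is the same entry-by-entry identification you carry out by hand; your caveat about faces sharing several edges is handled there implicitly by treating the (weak) dual as a multigraph.
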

\begin{proof}
We have that $G=H_*$. Let us assume that $v_{n+1}\in H^*$ is the vertex that corresponds to the outer face of $H$. Then $C(H)$ is the reduced Laplacian matrix $L_{n+1}(H^*)$. Now, set $c=\deg(G)+{ m}$, where ${ m}_i$ is the number of edges between the vertex associated with the $i$-{\it th} interior face and the outer face. Thus the result follows by applying Proposition~\ref{cor:evaluations}.2.
\end{proof}

Let $G$ be a plane graph.
Therefore by Lemma~\ref{lemma:lemmacycleintersection} and Theorem~\ref{thm:criticalideals_ciclematrix}, 
the sandpile group of any plane graph $H$ having $G$ as weak dual can be obtained from the critical ideals of $G$ by evaluating the indeterminates $X=(x_1,\dots,x_n)$ at the lengths $c=({ c}_1,\dots,{ c}_n)$ of the cycles bounding the interior faces of $H$.
Also $\det(A_X(G))|_{X={ c}}=\tau(H)$.
Let us illustrate this with the following example.

\begin{example}
Let $G$ be the graph described in the right-hand side in Figure~\ref{fig:exa}. Then
\[
A_Y(G)=
\begin{bmatrix}
    y_1 & -1 & 0 & -1\\
-1 & y_2 & -1 & 0\\
 0 & -1 & y_3 & -1\\
-1 & 0 & -1 & y_4\\
\end{bmatrix}.
\]
Since there are 2-minors in $A_Y(G)$ equal to $\pm1$, then $\gamma(G)\geq 2$, the equality follows since the third critical ideal of $G$ is non-trivial.
The Gr\"obner basis of $I_3(G)$ is
\[ 
\lrangle{y_1 +  y_3,  y_2 +  y_4,  y_3y_4}
\]
Moreover, $I_4(G)=\lrangle{\det(A_Y(G))}=\lrangle{
y_1y_2y_3y_4 -  y_1y_2 -  y_1y_4 -  y_2y_3 -  y_3y_4}$.
Now, we will use these critical ideals to obtain the sandpile groups of any plane graph $H$ whose weak dual is isomorphic to $G$.
Thus, we only need to evaluate the indeterminates at the length of the cycles bounding the interior faces of $H$.
Note that the length of the interior faces of $H$ is at least 2 and at least one of the interior faces has length at least 3.
One of such cases is when all interior faces of $H$ have the same length, say $t$. 
Hence, for this case, $\Delta_3(C(H))=\gcd(2t,t^2)$ and $\Delta_4(C(H))=|t^4-4t^2|$.
It is not difficult to see that $\Delta_3(C(H))$ is equal to $t$ whenever $t$ is odd and it is equal to $2t$ whenever $t$ is even.
Therefore, if the interior faces of $H$ have the same length $t$, the sandpile group $K(H)$ of $H$ is isomorphic to $\mathbb{Z}_{\gcd(2t,t^2)}\oplus\mathbb{Z}_{\frac{|t^4-4t^2|}{\gcd(2t,t^2)}}$ and $\tau(H)=|t^4-4t^2|$.
Since $t\geq3$, then the sandpile group of $H$ is not cyclic.
\end{example}

\section{Sandpile groups of outerplanar graphs}\label{sec:outerplanar}

We call a graph {\it outerplanar} if it has a planar embedding with the outer face containing all the vertices.
An outerplanar graph equipped with such embedding is known as {\it outerplane graph}.

\begin{lemma}\cite{fgh}\label{lemma:weakdualouterplanar}
A graph $G$ is outerplanar if and only if it has a weak dual $G_*$ which is a forest.
\end{lemma}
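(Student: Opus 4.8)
The plan is to prove both implications by a single topological mechanism that links an ``interior'' vertex to a cycle of $G_*$. Throughout I would first reduce to the case that $G$ is connected: a disconnected graph is outerplanar exactly when each component is, and its weak dual is the disjoint union of the weak duals of the components, so a forest is obtained exactly when each piece is a forest. For a connected plane graph every face is an open disk, and I would fix the standard geometric realization of $G_*$: place each weak-dual vertex in the interior of its (interior) face, and draw each weak-dual edge so that it crosses the corresponding shared primal edge exactly once, at an interior point, meeting no vertex of $G$. With this picture a cycle of $G_*$ becomes a simple closed curve in the plane, and a closed walk of $G_*$ becomes a closed curve avoiding the vertex set of $G$.

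For the forward implication I would start from an outerplane embedding and argue by contradiction, assuming $G_*$ contains a cycle $C$. Realizing $C$ as a simple closed curve $\gamma$, the Jordan curve theorem produces a bounded region $D$ enclosed by $\gamma$. The key step is a parity count: for each primal edge $e$, the number of times $\gamma$ meets $e$ has the same parity as whether the two endpoints of $e$ are separated by $\gamma$. By construction $\gamma$ crosses exactly the primal edges dual to the edges of $C$, each once, so the endpoints of every such edge lie on opposite sides of $\gamma$, and hence at least one endpoint lies in $D$. Thus $D$ contains a vertex $v$ of $G$ in its interior. But then a small neighbourhood of $v$ lies inside $\gamma$, whereas the outer (unbounded) face lies entirely outside $\gamma$, so $v$ cannot be incident to the outer face, contradicting that the embedding is outerplane. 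Therefore $G_*$ is acyclic, i.e. a forest.

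For the converse I would take a plane embedding whose weak dual $G_*$ is a forest and show that in this very embedding every vertex already lies on the outer face. Suppose some vertex $v$ is not on the outer face; then the faces $f_1,\dots,f_d$ met in cyclic order around $v$ are all interior faces, and the corresponding weak-dual vertices $x_1,\dots,x_d$ form a closed walk $W=x_1x_2\cdots x_dx_1$ in $G_*$, where $x_ix_{i+1}$ is the dual edge crossing the primal edge shared at $v$ (indices mod $d$). A tiny circle $\sigma$ around $v$ has winding number $1$ about $v$, and pushing the arc of $\sigma$ lying in each disk $f_i$ out through the point $x_i$ exhibits $\sigma$ as homotopic, in $\mathbb{R}^2\setminus\{v\}$, to the realization of $W$. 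On the other hand, a closed walk in a forest is null-homotopic inside the graph by backtrack elimination, and the drawing of $G_*$ avoids $v$; hence $W$ is null-homotopic in $\mathbb{R}^2\setminus\{v\}$ and has winding number $0$. This contradicts winding number $1$, so no such interior vertex exists and $G$ is outerplanar.

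The main obstacle I expect is the topological bookkeeping rather than any deep idea: making the parity-of-crossings statement precise, carrying out the homotopies strictly within the closed faces while never touching $v$ (legitimate because each face of a connected plane graph is an open disk), and checking that the degenerate situations---bridges, cut vertices, a face meeting $v$ more than once---do not break the winding-number computation. These cases are exactly why I would phrase the converse through homotopy and winding numbers rather than trying to extract a simple cycle combinatorially from the link of $v$, since the face sequence around $v$ need not consist of distinct faces.
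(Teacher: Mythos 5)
The paper does not prove this statement at all: it is imported verbatim from Fleischner--Geller--Harary \cite{fgh}, so there is no in-paper argument to compare against. Your proposal is essentially the classical topological proof of that result, and your forward direction is complete and correct: a cycle of $G_*$ realized as a Jordan curve $\gamma$ crosses each corresponding primal edge exactly once, so each such edge has an endpoint in the bounded region; since $\gamma$ lies in the union of the interior faces and interior points of edges, the (connected, unbounded) outer face sits entirely in the unbounded complementary region, and the enclosed vertex cannot lie on it.

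The converse, however, has a real gap at the step you dismiss as bookkeeping. You justify the homotopy from the small circle $\sigma$ around $v$ to the realization of the face-walk $W$ by saying the elementary moves take place inside closed faces, ``legitimate because each face of a connected plane graph is an open disk.'' That is not enough: what you actually need is that each elementary move does not change the winding number about $v$, i.e.\ that the relevant loops in $\overline{f_i}\setminus\{v\}$ are null-homotopic in $\mathbb{R}^2\setminus\{v\}$, and $\overline{f_i}\setminus\{v\}$ need not be simply connected when $v$ lies on the boundary of $f_i$. Concretely, take a triangle with a pendant edge to $v$ drawn inside it: the unique interior face $f$ has $\overline{f}\setminus\{v\}$ containing loops of winding number $1$ about $v$. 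So the blanket ``stay inside the face closure'' justification fails exactly in the degenerate situations you list (an edge at $v$ with the same face on both sides, a face meeting $v$ in several corners), and those are where the content of the converse lives. Two honest ways to close it: (a) observe that the forest hypothesis already forbids loops and parallel edges in $G_*$, hence forbids a bridge at $v$ with one interior face on both sides and forbids consecutive faces at $v$ sharing two edges, and then carry out the corner-by-corner homotopy through the characteristic map $D^2\to\overline{f_i}$ of each face, where the lifted region (open disk plus boundary arcs) is star-shaped, hence the lifted elementary loops are contractible; or (b) drop the homotopy $\sigma\simeq W$ altogether and compute the winding number of $W$ about $v$ directly, using that it vanishes on the outer face and jumps by exactly $\pm 1$ across each edge incident to $v$ (each such dual edge is traversed once by $W$, all in the same rotational sense), while no other primal edge is crossed. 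Either repair yields winding number $1$, contradicting the null-homotopy of $W$ in the forest, and completes your argument.
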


One advantage of the outerplane graphs is that when the outerplanar has been embedded in the plane with all all the vertices lying on the outer face, then the weak dual is the union of the weak duals of the blocks of $G$. 

Next result implies that we should focus in computing sandpile groups of biconnected outerpanar graphs.

\begin{lemma}\cite{watkins}
Let $G$ be a graph with $b$ non-trivial blocks $B_1,\dots,B_b$.
Then $K(G)\cong K(B_1)\oplus\cdots\oplus K(B_b)$.
\end{lemma}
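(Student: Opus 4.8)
The plan is to prove the block decomposition of the sandpile group:
\[
K(G)\cong K(B_1)\oplus\cdots\oplus K(B_b),
\]
by passing to the Laplacian matrices and showing that, after a suitable labeling of the vertices, the Laplacian of $G$ decomposes in a way that makes its cokernel split as a direct sum over the blocks. The key structural fact I would exploit is that the blocks of $G$ meet only in cut vertices, and that a cut vertex shared by two blocks contributes independently to each block's local degree. Concretely, I would first recall that $K(G)\cong\coker(L(G))$ (the torsion part), so it suffices to understand the cokernel of $L(G)$.

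First I would set up the Laplacian of $G$ with respect to the block structure. Label the vertices so that the non-cut vertices of each block come first and the cut vertices are handled carefully. The crucial observation is that for any edge of $G$ lying in block $B_j$, both endpoints lie in $B_j$, since every edge belongs to exactly one block. Hence the off-diagonal entry $L(G)_{u,v}=-m(u,v)$ is nonzero only when $u,v$ lie in a common block, and the diagonal entry $L(G)_{v,v}=\deg_G(v)=\sum_j \deg_{B_j}(v)$ splits as a sum of the degrees of $v$ within each block containing it. This means $L(G)=\sum_{j=1}^b \widehat{L}(B_j)$, where $\widehat{L}(B_j)$ is the Laplacian of $B_j$ padded with zero rows and columns for vertices outside $B_j$.

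The main step is then to compute $\coker(L(G))$ from this additive decomposition. Here I would use the standard fact, going back to the reduced Laplacian description in the excerpt, that the sandpile group is unchanged by the choice of sink, so I may contract the analysis block by block. The cleanest route is to argue by induction on the number $b$ of blocks: if $v$ is a cut vertex separating $G$ into a block $B_b$ and the union $G'$ of the remaining blocks sharing only $v$, then choosing $v$ as the sink makes the reduced Laplacian $L_v(G)$ block diagonal, with one diagonal block equal to $L_v(B_b)$ (the reduced Laplacian of $B_b$ with sink $v$) and the other equal to the reduced Laplacian of $G'$ with sink $v$. Because $v$ is a cut vertex, there are no edges between the non-$v$ vertices of $B_b$ and those of $G'$, so the off-diagonal blocks vanish and $\coker(L_v(G))\cong\coker(L_v(B_b))\oplus\coker(L_v(G'))$. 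Applying $K(\cdot)\cong\coker(L_v(\cdot))$ and the inductive hypothesis to $G'$ yields the claim, and trivial (bridge) blocks contribute a trivial group and may be discarded.

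The main obstacle I anticipate is bookkeeping around the cut vertices: I must verify that the reduced Laplacian really is block diagonal after deleting the row and column of the shared cut vertex, which hinges precisely on the defining property of blocks that any two distinct blocks share at most one vertex and no edges. I would take care to justify that deleting the single cut vertex $v$ simultaneously serves as a valid sink for both $B_b$ and $G'$, using the sink-independence of $K(\cdot)$ noted earlier in the paper, and that the determinant (number of spanning trees) correspondingly factors, $\tau(G)=\tau(B_b)\,\tau(G')$, which is consistent with the order $|K(G)|=\prod_j|K(B_j)|$. Everything else is routine linear algebra over $\mathbb{Z}$ using that the cokernel of a block-diagonal integer matrix is the direct sum of the cokernels.
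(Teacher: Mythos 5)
The paper does not prove this lemma (it is quoted from Watkins), but your argument is correct and is essentially the standard proof underlying that citation: for a leaf block $B_b$ of the block--cut tree attached at cut vertex $v$, choosing $v$ as sink makes $L_v(G)$ block diagonal with blocks $L_v(B_b)$ and $L_v(G')$, so the cokernel splits, and induction plus sink-independence of $K(\cdot)$ finishes the proof. One minor bookkeeping point: your preliminary identity $L(G)=\sum_{j}\widehat{L}(B_j)$ must run over \emph{all} blocks, including the trivial (bridge) ones, not just $B_1,\dots,B_b$; this does not affect the inductive argument, where trivial blocks contribute trivial summands.
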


The following result is an specialization of Lemma~\ref{lemma:weakdualouterplanar}.

\begin{corollary}
A graph $G$ is biconnected outerplane if and only if its weak dual $G_*$ is a tree.
\end{corollary}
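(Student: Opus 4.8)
The plan is to deduce the corollary from Lemma~\ref{lemma:weakdualouterplanar} by refining the dichotomy ``$G_*$ is a forest versus not a forest'' into ``$G_*$ is a tree versus a disconnected forest.'' Since a nonempty forest is a tree exactly when it is connected, and since Lemma~\ref{lemma:weakdualouterplanar} already identifies ``$G_*$ is a forest'' with ``$G$ is outerplanar,'' the entire task reduces to the following claim: for an outerplane graph $G$, the weak dual $G_*$ is connected if and only if $G$ is biconnected. I would isolate this equivalence and prove its two directions separately.

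For the forward direction I would argue by a direct Euler-characteristic count, which I expect to be the cleanest route. If $G$ is biconnected (that is, $2$-connected) and outerplane, then every face is bounded by a cycle and, because all vertices lie on the outer face, the outer boundary is a single cycle through all $V$ vertices. Hence exactly $V$ of the $E$ edges lie on the outer face, and the remaining $E-V$ edges are chords, each shared by two distinct bounded faces. By Euler's formula the number of interior faces is $E-V+1$, so $G_*$ has $E-V+1$ vertices; and since each chord contributes exactly one edge to $G_*$, it has $E-V$ edges. A forest on $n$ vertices with $n-1$ edges is necessarily connected, so $G_*$ is a tree.

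For the converse I would use the structural fact recalled just above the statement, namely that $G_*$ is the union of the weak duals of the blocks of $G$, these pieces being vertex- and edge-disjoint because interior faces belonging to different blocks share no edge. Arguing by contrapositive, if $G$ is not biconnected it splits at a cut vertex into two or more blocks; as soon as two of these blocks each carry an interior face, $G_*$ is disconnected and hence not a tree. Thus a connected weak dual forces $G$ to consist of a single nontrivial block, i.e.\ to be biconnected.

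The main obstacle will be pinning down the connectivity step cleanly, and in particular handling the degenerate blocks. On the forward side one must verify that in a biconnected outerplane graph every non-boundary edge genuinely separates two \emph{distinct} bounded faces, so that the chord count $E-V$ equals the edge count of $G_*$ exactly; this is where $2$-connectedness (forcing the Hamiltonian outer cycle) does the real work. On the converse side the care point is that a trivial block (a bridge) contributes no interior face to $G_*$, so I must make sure the block decomposition is phrased in terms of the blocks that actually produce vertices of $G_*$. Reconciling these degenerate cases with the Euler count is the step I would write out most carefully.
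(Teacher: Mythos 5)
Your forward direction is correct and is genuinely more informative than what the paper offers: the paper states this corollary without proof, presenting it as a ``specialization'' of Lemma~\ref{lemma:weakdualouterplanar} combined with the earlier observation that the weak dual of an outerplane graph is the disjoint union of the weak duals of its blocks. Your Euler count ($E-V+1$ interior faces, $E-V$ chords, each contributing one dual edge, and a forest with one fewer edge than vertices being connected) pins down the connectivity quantitatively. The one detail worth making explicit there is that distinct chords separate distinct pairs of interior faces; this comes for free from the forest property of Lemma~\ref{lemma:weakdualouterplanar}, since a repeated pair would produce a $2$-cycle in $G_*$.

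The converse, however, has a gap that you flag but do not close, and it is in fact a defect of the literal statement rather than of your argument alone. Take $G$ to be a triangle with one pendant edge attached: $G$ is outerplane and not biconnected (the vertex of degree $3$ is a cut vertex), yet $G_*$ is a single vertex, which is a tree. More generally, any outerplane graph consisting of one nontrivial block with bridges or pendant trees attached has the same connected weak dual as that block alone. So ``$G_*$ is a tree $\Rightarrow G$ is biconnected'' fails, and no rephrasing of the block decomposition ``in terms of the blocks that actually produce vertices of $G_*$'' can rescue it: the weak dual simply does not see trivial blocks. The honest resolution is either to restrict the converse to graphs in which every edge lies on a cycle (equivalently, every block is nontrivial), or to observe that only the forward implication is ever used downstream (Theorem~\ref{main} takes $G$ biconnected outerplane as a hypothesis), so the defective direction is harmless. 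As written, your proof of the converse stalls exactly at the degenerate case you yourself identified, and that case cannot be reconciled with the statement as given.
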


Now we will give a description of the generators of the critical ideals of any tree $T$, which were obtained in \cite{corrval1} in terms of the 2-matchings of the graph $T^l$, where $T^l$ is the graph obtained from $T$ by adding a loop at each vertex of $T$.

Recall that a \emph{$2$-matching} is a set of edges $\mathcal{M}\subseteq E(G)$ such that every vertex of $G$ is incident to at most two edges in $\mathcal{M}$ and note that a loop counts as two incidences for its respective vertex.
The set of 2-matchings of $T^l$ with $k$ edges is denoted by $2M(T^l,k)$.
Given a 2-matching $\mathcal{M}$ of $T^l$, the loops $\ell(M)$ of $\mathcal{M}$ is the edge set $\mathcal{M}\cap\{uu : u\in V(G)\}$. 
A 2-matching $\mathcal{M}$ of $T^l$ is {\it minimal} if there does not exist a 2-matching $\mathcal{M}'$ of $T^l$ such that $\ell(\mathcal{M}')\subsetneq \ell(\mathcal{M})$ and 
$|\mathcal{M}'|= \mathcal{M}$.
The set of minimal 2-matchings of $T^l$ will be denoted by $2M^*\left(T^l\right)$, and the set of minimal 2-matchings of $T^l$ with $k$ edges will be denoted by $2M^*_k\left(T^l\right)$.
Let $d_X(\mathcal{M})$ denote $\det(A_X(T)[V(\ell(\mathcal{M}))])$, that is, the determinant of the submatrix of $A_X(T)$ formed by selecting the columns and rows associated with the loops of $\mathcal{M}$.

\begin{lemma}\cite[Theorem 3.7]{corrval1}\label{thm:critidealstrees}
Let $T$ be a tree with $n$ vertices.
Then 
\[
I_k(T)=\lrangle{ \left\{ d_X(\mathcal{M}) : \mathcal{M}\in 2M^*_k\left(T^l\right)\right\} },
\]
for $k\in[n]$.
\end{lemma}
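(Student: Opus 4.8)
The plan is to establish the ideal equality in Lemma~\ref{thm:critidealstrees} by a direct cofactor analysis of the matrix $A_X(T)$, exploiting the tree structure to identify exactly which monomials can appear in a $k$-minor. Recall that $I_k(T)$ is generated by \emph{all} $k\times k$ minors of $A_X(T)=\diag(x_1,\dots,x_n)-A(T)$, so the content of the lemma is that the much smaller set $\{d_X(\mathcal{M}):\mathcal{M}\in 2M^*_k(T^l)\}$ already generates the same ideal. First I would fix a $k$-subset $R$ of rows and a $k$-subset $C$ of columns and expand the corresponding minor $\det A_X(T)[R,C]$ via the permutation (Leibniz) formula. Each nonzero term corresponds to a way of selecting, in the bipartite sense, one entry from each chosen row and column; the diagonal entries contribute the indeterminates $x_i$ (these are the ``loops''), while an off-diagonal entry $-a_{uv}$ is nonzero only when $uv\in E(T)$. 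Thus every nonzero term of the minor is naturally encoded by a subgraph of $T^l$ in which each selected vertex receives at most the allowed incidences, i.e.\ a $2$-matching of $T^l$ with exactly $k$ edges (counting a loop as contributing to both the row and column index of its vertex).

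Second, I would show the two containments separately. The containment $\lrangle{d_X(\mathcal{M}):\mathcal{M}\in 2M^*_k(T^l)}\subseteq I_k(T)$ should be immediate once one checks that each $d_X(\mathcal{M})$ is itself (up to sign) a genuine $k$-minor of $A_X(T)$: for a minimal $2$-matching $\mathcal{M}$ one takes the rows and columns indexed by $V(\ell(\mathcal{M}))$ together with the endpoints forced by the non-loop edges, and the principal/near-principal minor there evaluates to $d_X(\mathcal{M})$. The reverse containment $I_k(T)\subseteq\lrangle{d_X(\mathcal{M})}$ is the substantive direction: one must show that an \emph{arbitrary} $k$-minor, expanded as a signed sum over $2$-matchings with $k$ edges as above, lies in the ideal generated by the \emph{minimal} ones. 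Here I would group the Leibniz terms of a fixed minor by their $2$-matching type and argue that each such $d_X(\mathcal{M})$, for a possibly non-minimal $\mathcal{M}$, can be rewritten in terms of minimal ones. The tree hypothesis is what makes this tractable: in a tree every edge is a bridge, so a non-loop edge $uv$ used in a $2$-matching connects two subtrees, and one can ``trade'' a non-loop component for loops on its endpoints, reducing $\ell(\mathcal{M})$ and invoking the definition of minimality. This reduction step, together with the forest's lack of cycles preventing alternative matchings from interfering, is where the combinatorial identity gets its leverage.

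The hard part will be the bookkeeping in the reverse containment: controlling the signs coming from the permutation expansion and verifying that when a $2$-matching is not minimal, the surplus factors (the non-loop path-components and their determinantal contributions) factor cleanly so that $d_X(\mathcal{M})$ becomes an $\mathbb{Z}[X]$-multiple of some $d_X(\mathcal{M}')$ with $\ell(\mathcal{M}')\subsetneq\ell(\mathcal{M})$ and $|\mathcal{M}'|=|\mathcal{M}|=k$. I would handle this by induction on $|\ell(\mathcal{M})|$ (or equivalently on the number of non-loop edges), with the base case being minimal matchings where there is nothing to prove. Since this lemma is quoted verbatim as \cite[Theorem~3.7]{corrval1}, the cleanest route in the present paper is simply to cite it; but the sketch above is the proof strategy I would reconstruct, and I expect the deletion-contraction flavor of the bridge-trading argument to be the crux, with the acyclicity of $T$ ensuring that the minimal $2$-matchings form a spanning generating set without redundancy across different cycles.
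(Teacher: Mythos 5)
The paper contains no proof of this lemma: it is imported verbatim as \cite[Theorem 3.7]{corrval1}, so your bottom-line recommendation (just cite it) coincides exactly with what the authors do, and your identification of the correspondence between Leibniz terms of $k$-minors of $A_X(T)$ and $2$-matchings of $T^l$ with $k$ edges is indeed the engine of the cited proof.

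If your sketch were to stand as a proof, however, two steps have genuine gaps. For the forward containment, $d_X(\mathcal{M})=\det(A_X(T)[V(\ell(\mathcal{M}))])$ is a \emph{principal minor of size} $|\ell(\mathcal{M})|$, which is strictly smaller than $k$ whenever $\mathcal{M}$ contains non-loop edges, so it is not ``a genuine $k$-minor up to sign'' on its face. The $k\times k$ submatrix you propose (looped vertices plus tails as rows, looped vertices plus heads as columns) has a Leibniz expansion that in general contains cross terms mixing looped vertices with path vertices, since a looped vertex may be adjacent in $T$ to an endpoint of a non-loop edge; showing that the minor equals $\pm d_X(\mathcal{M})$ modulo contributions already in the ideal is exactly where the structure of \emph{minimal} $2$-matchings in a tree must be used, and it is not immediate. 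For the reverse containment you conflate two different objects: grouping the Leibniz terms of a $k$-minor ``by $2$-matching type'' assigns to each $2$-matching $\mathcal{M}$ a single signed \emph{monomial} $\pm\prod_{v\in V(\ell(\mathcal{M}))}x_v$, not the polynomial $d_X(\mathcal{M})$, which is itself a sum over many $2$-matchings of the induced principal submatrix; reassembling the monomial-level expansion into a $\mathbb{Z}[X]$-combination of the $d_X(\mathcal{M}')$ for minimal $\mathcal{M}'$ is the actual content of the theorem. Moreover, your ``bridge-trading'' move (replacing a non-loop edge $uv$ by loops at $u$ and $v$) changes the edge count from $k$ to $k+1$ and \emph{enlarges} the loop set, so it runs in the wrong direction relative to the definition of minimality, which asks for $\mathcal{M}'$ with $\ell(\mathcal{M}')\subsetneq\ell(\mathcal{M})$ and the same number of edges; and divisibility of the larger determinant $d_X(\mathcal{M})$ by a smaller $d_X(\mathcal{M}')$ fails in general, so the reduction must be phrased as ideal membership via a Laplace-type expansion rather than as factorization. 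In the context of this paper, citing \cite{corrval1} is the right call.
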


It follows directly from Theorem~\ref{thm:criticalideals_ciclematrix} and Lemma~\ref{thm:critidealstrees} that the sandpile groups of outerplanar graphs are determined in terms of the length of the cycles bounding the interior faces of their outerplane embeddings and the 2-matching of the weak dual with loops.

\begin{theorem}\label{main}
Let $G$ be a biconnected outerplane graph whose weak dual is the tree $T$ with $n$ vertices, and let $c=(c_1,\dots,c_n)$ be the vector of the lengths of the cycles bounding the finite faces $F_1,\dots,F_n$.
Let
\[
\Delta_k=\gcd\left( \left\{ d_X(\mathcal{M})|_{X=c} : \mathcal{M}\in 2M^*_k\left(T^l\right)\right\} \right),
\]
for $k\in[n]$.
Then $K(G)\cong\mathbb{Z}_{\Delta_1}\oplus\mathbb{Z}_\frac{\Delta_2}{\Delta_1}\oplus\cdots\mathbb{Z}_\frac{\Delta_n}{\Delta_{n-1}}$ and $\tau(G)=\Delta_n$.
\end{theorem}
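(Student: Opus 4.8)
The plan is to treat Theorem~\ref{main} as the fusion of two previously established facts: the explicit $2$-matching description of the critical ideals of a tree (Lemma~\ref{thm:critidealstrees}) and the evaluation principle for weak duals (Theorem~\ref{thm:criticalideals_ciclematrix}), after which the group structure is read off from the Smith normal form of the cycle-intersection matrix. First I would invoke the corollary that the weak dual of a biconnected outerplane graph is a tree, so that $T$ is indeed a tree with $n$ vertices and Lemma~\ref{thm:critidealstrees} applies, giving the generators $\{d_X(\mathcal{M}) : \mathcal{M}\in 2M^*_k(T^l)\}$ of the critical ideal $I_k(T)$ for every $k\in[n]$.

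Next I would introduce the evaluation homomorphism $\varepsilon\colon\mathbb{Z}[X]\to\mathbb{Z}$ determined by $x_i\mapsto c_i$. Since $\varepsilon$ is a surjective ring homomorphism, the image ideal $\varepsilon(I_k(T))$ is generated by the evaluated generators $\{d_X(\mathcal{M})|_{X=c}\}$, and because $\mathbb{Z}$ is a principal ideal domain this image equals $\langle\Delta_k\rangle$, with $\Delta_k$ the gcd defined in the statement. On the other hand, Theorem~\ref{thm:criticalideals_ciclematrix} (applied with plane graph $G$ and weak dual $T$) identifies the very same evaluated ideal with $\langle\Delta_k(C(G))\rangle$, the ideal generated by the gcd of the $k$-minors of the cycle-intersection matrix $C(G)$. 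Comparing these two principal ideals of $\mathbb{Z}$, and using that both generators are taken nonnegative, yields the key equality $\Delta_k=\Delta_k(C(G))$ for all $k\in[n]$.

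From here the conclusion is bookkeeping. Because $G$ is biconnected it is in particular connected, so its plane dual $G^*$ is connected and $C(G)=L_{n+1}(G^*)$ is a full-rank $n\times n$ integer matrix with $\det C(G)=\tau(G)$; hence all $n$ determinantal divisors $\Delta_k(C(G))$ are positive and the Smith normal form of $C(G)$ has exactly $n$ positive invariant factors and no free part. The standard relation between invariant factors and determinantal divisors gives the $k$-th invariant factor as $d_k(C(G))=\Delta_k(C(G))/\Delta_{k-1}(C(G))=\Delta_k/\Delta_{k-1}$ (with $\Delta_0=1$). Finally Lemma~\ref{lemma:lemmacycleintersection} gives $K(G)\cong\coker(C(G))\cong\bigoplus_{k=1}^{n}\mathbb{Z}_{d_k}=\mathbb{Z}_{\Delta_1}\oplus\mathbb{Z}_{\Delta_2/\Delta_1}\oplus\cdots\oplus\mathbb{Z}_{\Delta_n/\Delta_{n-1}}$, and $\tau(G)=\det C(G)=\prod_{k=1}^{n}d_k=\Delta_n$ by telescoping.

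I expect the only genuine subtlety to be the gluing step, namely the verification that evaluation commutes with ideal generation so that $\varepsilon(\langle g_1,\dots,g_m\rangle)=\langle\varepsilon(g_1),\dots,\varepsilon(g_m)\rangle$; this is formally immediate from $\varepsilon$ being a ring homomorphism, but it is precisely the point where Lemma~\ref{thm:critidealstrees} and Theorem~\ref{thm:criticalideals_ciclematrix} must be combined, and one must be careful that the $2$-matching set truly generates $I_k(T)$ as an ideal of $\mathbb{Z}[X]$ \emph{before} evaluating, rather than only matching the gcd after the fact. Everything else, including the full-rank claim and the telescoping product, is routine.
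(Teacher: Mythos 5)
Your proposal is correct and follows exactly the route the paper intends: the theorem is stated there as a direct consequence of combining Lemma~\ref{thm:critidealstrees} (the $2$-matching generators of $I_k(T)$) with Theorem~\ref{thm:criticalideals_ciclematrix} (evaluation at the face lengths gives $\Delta_k(C(G))$), then reading off the invariant factors and applying Lemma~\ref{lemma:lemmacycleintersection}. The details you supply, including the evaluation homomorphism and the full-rank observation, are the right ones and fill in what the paper leaves implicit.
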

Let us illustrate the utility of Theorem \ref{main} in the following example.
\begin{example}
\begin{figure}[ht]
    \centering
    \begin{tabular}{c@{\extracolsep{0.5cm}}c}
    \begin{tikzpicture}[scale=2,thick]
		\tikzstyle{every node}=[minimum width=0pt, inner sep=1pt, circle]
		    \draw (-2,1.2) node[draw] (0) { \tiny 1};
		    \draw (-1,0.75) node[draw] (1) { \tiny 2};
		    \draw (-0.3,1) node[draw] (2) { \tiny 3};
		    \draw (0.8,1.2) node[draw] (3) { \tiny 4};
		    \draw (1.4,0) node[draw] (4) { \tiny 5};
		    \draw (0.8,-1.2) node[draw] (5) { \tiny 6};
		    \draw (-0.3,-1.2) node[draw] (6) { \tiny 7};
		    \draw (-1,-0.75) node[draw] (7) { \tiny 8};
		    \draw (-2,-1.2) node[draw] (8) { \tiny 9};
		    \draw (-2.5,-0.25) node[draw] (9) { \tiny 10};
		    \draw (-2.5,0.25) node[draw] (10) { \tiny 11};
		    
		    \draw (-1.9,0.8) node[] (11) { $F_1$};
			\draw (-1.9,-0.8) node[] (12) { $F_2$};
			\draw (-1.5,0) node[] (13) { $F_3$};
			\draw (0,0) node[] (14) { $F_4$};
			\draw (0.7,0.8) node[] (15) { $F_5$};
			\draw (0.7,-0.8) node[] (16) { $F_6$};
			\draw  (1) edge (10);
			\draw  (1) edge (7);
			\draw  (7) edge (9);
			\draw  (2) edge (4);
			\draw  (4) edge (6);
			
			\draw  (0) -- (1) -- (2) -- (3) -- (4) -- (5);
			\draw  (5) -- (6) -- (7) -- (8) -- (9) -- (10) -- (0);
		\end{tikzpicture}
     &
    \begin{tikzpicture}[scale=2,thick]
		\tikzstyle{every node}=[minimum width=0pt, inner sep=2pt, circle]
			\draw (-0.5,1.2) node[draw] (17) { \tiny 1};
			\draw (0.5,1.2) node[draw] (18) { \tiny 2};
			\draw (0,0.4) node[draw] (19) { \tiny 3};
			\draw (0,-0.3) node[draw] (20) { \tiny 4};
			\draw (-0.5,-1.1) node[draw] (21) { \tiny 5};
			\draw (0.5,-1.1) node[draw] (22) { \tiny 6};
			\draw  (19) edge (17) edge (18);
			\draw  (20) edge (19) edge (21) edge (22);
		\end{tikzpicture} \\
		$G$ & $T$\\
    \end{tabular}
    \caption{An outerplane graph $G$ with 6 interior faces and its weak dual $T$.}
    \label{fig:exa2}
\end{figure}
Let $G$ be the outerplane graph in figure \ref{fig:exa2}, then $G_*=T$ where the vertex $i\in V(T)$ corresponds to the face $F_i$ of $G$ for each $1\leq i\leq 6$. We will use Theorem \ref{main} to compute the sandpile group of $K(G)$. We need to compute $2M^*_k(T^l)$ for $1\leq k\leq 6$. First, note that if $T^l$ has  minimal $2$-matching of size $k$ without loops, then $I_k(T)=\langle 1 \rangle$. It is easy to see that this is the case for $k\leq 4$ and then $\Delta_1 = \Delta_2 = \Delta_3 = \Delta_4=1$. On the other hand, for $k=5$,
\[2M^*_5(T^l)=\begin{Bmatrix}
\{(11),(22),(33),(45),(46) \},&
\{(13),(23),(44),(55),(66) \},\\
\{(11),(55),(23),(34),(46) \},&\{(11),(66),(23),(34),(45) \},\\
\{(22),(55),(13),(34),(46) \},&\{(22),(66),(13),(34),(45) \}
\end{Bmatrix}.  \]
Therefore, by Lemma \ref{thm:critidealstrees}, \[I_5(T^l)=\langle x_1x_2x_3-x_1-x_2,\ x_4x_5x_6-x_5-x_6,\ x_1x_5,\ x_1x_6,\ x_2x_5,\ x_2x_6\rangle .\] 
Moreover, the $6$-th critical ideal of $T$ is generated by $\det(A_X(T))$;
\begin{eqnarray*}\tiny 
x_1x_2x_3x_4x_6x_5- x_1 x_2 x_3 x_5- x_1 x_2 x_3 x_6- x_1 x_2 x_6 x_5 - x_1 x_4 x_6 x_5\\- x_2 x_4 x_6 x_5 +x_1 x_5 + x_2 x_5  + x_1 x_6 + x_2 x_6 \end{eqnarray*}

Now, since $c=(3,3,4,5,3,3)$ and by Theorem \ref{main}, $\Delta_5 = \gcd(30,39,9)=3$, $\Delta_6=1089$ and thus $K(G)=\mathbb{Z}_3\oplus \mathbb{Z}_{363}$.
Note that we can easily compute the sandpile group of any graph with $T$ as its weak dual, using the corresponding cycle-lengths. 
\begin{figure}[ht]
    \centering
    \begin{tabular}{c@{\extracolsep{0.2cm}}c}
    \begin{tikzpicture}[scale=2,thick]
		\tikzstyle{every node}=[minimum width=0pt, inner sep=1pt, circle]
		    \draw (-1.8,1.2) node[draw] (0) {\tiny 1};
		    \draw (-1,0.75) node[draw] (1) {\tiny 2};
		    \draw (-0.2,1.2) node[draw] (2) { \tiny 3};
		    \draw (0.2,0) node[draw] (3) { \tiny 4};
		    \draw (-0.2,-1.2) node[draw] (4) { \tiny 5};
		    \draw (-1,-0.75) node[draw] (6) { \tiny 6};
		    \draw (-1.8,-1.2) node[draw] (7) { \tiny 7};
		    \draw (-2.2,0) node[draw] (8) { \tiny 8};

		    \draw (-1.6,0.8) node[] (11) { $F_1$};
			\draw (-1.6,-0.8) node[] (12) { $F_2$};
			\draw (-1.5,0) node[] (13) { $F_3$};
			\draw (-0.5,0) node[] (14) { $F_4$};
			\draw (-0.4,0.8) node[] (15) { $F_5$};
			\draw (-0.4,-0.8) node[] (16) { $F_6$};
			\draw  (1) edge (8);
			\draw  (1) edge (6);
			\draw  (6) edge (8);
			\draw  (1) edge (3);
			\draw  (3) edge (6);
			
			\draw  (0) -- (1) -- (2) -- (3) -- (4);
			\draw  (4) -- (6) -- (7) -- (8) -- (0);
		\end{tikzpicture}
     &
    \begin{tikzpicture}[scale=2,thick]
		\tikzstyle{every node}=[minimum width=0pt, inner sep=1pt, circle]
		    \draw (-2,1.2) node[draw] (0) { \tiny 1};
		    \draw (-1.4,0.75) node[draw] (1) { \tiny 2};
		    \draw (-0.9,0.85) node[draw] (a) { \tiny 2};
		    \draw (-0.4,1) node[draw] (2) { \tiny 3};
		    \draw (0.2,1.2) node[draw] (3) { \tiny 4};
		    \draw (0.8,0.25) node[draw] (4) { \tiny 5};
		    \draw (0.8,-0.25) node[draw] (17) { \tiny 5};
		    \draw (0.2,-1.2) node[draw] (5) { \tiny 6};
		    \draw (-0.7,-1.2) node[draw] (6) { \tiny 7};
		    \draw (-1.4,-0.75) node[draw] (7) { \tiny 8};
		    \draw (-2,-1.2) node[draw] (8) { \tiny 9};
		    \draw (-2.3,-0.25) node[draw] (9) { \tiny 10};
		    \draw (-2.3,0.25) node[draw] (10) { \tiny 11};
		    
		    \draw (-1.9,0.8) node[] (11) { $F_1$};
			\draw (-1.9,-0.8) node[] (12) { $F_2$};
			\draw (-1.5,0) node[] (13) { $F_3$};
			\draw (0,0) node[] (14) { $F_4$};
			\draw (0.2,0.8) node[] (15) { $F_5$};
			\draw (0.2,-0.85) node[] (16) { $F_6$};
			\draw  (4) edge (17);
			\draw  (1) edge (10);
			\draw  (a) edge (7);
			\draw  (7) edge (9);
			\draw  (2) edge (4);
			\draw  (17) edge (6);
			
			\draw  (0) -- (1) -- (a) -- (2) -- (3) -- (4);
			\draw  (17) -- (5) -- (6) -- (7) -- (8) -- (9) -- (10) -- (0);
		\end{tikzpicture}\\
		$G_1$ & $G_2$\\
    \end{tabular}
    \caption{An outerplane graph $G$ with 6 interior faces and its weak dual $T$.}
    \label{fig:exa2.2}
\end{figure}
For instance, some allowed edge contractions or vertex splittings of $G$ as in Figure \ref{fig:exa2.2}. Let $c_1=(3,3,3,3,3,3)$ and $c_2=(3,3,5,6,3,3)$ be the vectors of lengths of the cycles bounding the interior faces of $G_1$ and $G_2$ respectively. Then \[K(G_1)=\mathbb{Z}_{\gcd(39,48,9)}\oplus\mathbb{Z}_{\frac{1791}{\gcd(39,48,9)}}=\mathbb{Z}_3\oplus\mathbb{Z}_{597} \text{ and}\] \[K(G_2)=\mathbb{Z}_{\gcd(21,9)}\oplus\mathbb{Z}_{\frac{360}{\gcd(21,9)}}=\mathbb{Z}_3\oplus\mathbb{Z}_{120}.\]
\end{example}

\begin{remark}
Note that if $G$ is a biconnected outerplane graph with weak dual $T$. Then any subdivision of the non-chordal edges of $G$ is an outerplane graph with the same weak dual. Therefore, by Theorem \ref{main}, the algebraic structure of the sandpile groups of any such graph in the homeomorphism class of $G$ is decoded in the combinatorial structure of $T$.
\end{remark}

Moreover, if $G$ is a biconnected outerplane graph whose weak dual is the tree $T$, then $f_1(C(G))\geq\gamma(T)$.
Let $\nu_2(G)$ denote the \emph{$2$-matching number} of $G$ that is defined as the maximum number of edges of a $2$-matching of $G$.
It was proven in \cite{corrval1} that for any tree $T$, the equality $\gamma(T)=\nu_2(T)$ holds.
Later, in \cite{alflin} it was proven that $\nu_2(T)=n-\delta(T)$ for any tree $T$ on $n$ vertices, where the parameter $\delta(T)$ is defined as the maximum of $p-q$ such that by deleting $q$ vertices from $T$ the remaining graph becomes $p$ paths.
Since it was found a linear-time algorithm for finding $\delta(T)$ \cite{JS02}, it was concluded in \cite{alflin} that there is a polynomial time algorithm to compute the algebraic co-rank for trees.
Also, in \cite{alflin}, it was proved that for any tree $T$, the algebraic co-rank $\gamma(T)$ coincides with the {\it minimum rank} $\mr(T)$ of $T$ and with $\mz(T):=|V(T)|-Z(T)$, where $Z(T)$ denote the {\it zero-forcing number} of $T$.

In the following the sandpile groups of some outerplanar graphs are further simplified.

\subsection{Outerplane graphs whose weak dual is a path}

Let us consider the outerplane graphs whose common weak dual is a path.
Let $(k_1,\dots,k_n)$ be a sequence of integers where each $k_i\geq 2$.
Let $PC_0$ denote the path with one edge. 
For each $1\leq i \leq n$, take the graph $PC_i$ from the graph $PC_{i-1}$ by adding a path with $k_{i}-1$ edges between any pair of adjacent vertices of the path added in the construction of $PC_{i-1}$.
Thus, the graph $PC_n$ consists of a stack of $n$ polygons with $k_1,\dots, k_n$ sides. The graph $PC_n$ is known as {\it polygon chain}.
Polygon chains are the outerplanar graphs having the path as a weak dual.

It is not difficult to see that $\gamma(G)=n-1$ if $G$ is a path with $n$ vertices.
The opposite is also true, see \cite[Corollary 3.9]{corrval1}.
From which follows that polygon chains have cyclic sandpile group.
The last critical ideal $I_n(P_n)$ of the path $P_n$ with $n$ vertices is generated by the determinant of $A_X(P_n)$.
Next relations follows directly from the determinant of $A_X(P_n)$. These were already noticed in \cite{BG,cm,Kerpkiy}.

\begin{lemma}\label{recursivelemma}
Let $P_n$ be the path with $n$ vertices and let $X=\{x_1,\dots,x_n\}$ a set of indeterminates associated with the vertices of $P_n$.
Then \[
\det(A_X(P_n))=x_n\det(A_X(P_{n-1}))-\det(A_X(P_{n-2}))\]
and $\tau(PC_n)=k_n\tau(PC_{n-1})-\tau(PC_{n-2})$.
\end{lemma}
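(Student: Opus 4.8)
The statement splits into two parallel recurrences, and the natural strategy is to prove the polynomial identity for $\det(A_X(P_n))$ first and then deduce the spanning-tree recurrence as a corollary. The plan is to expand $\det(A_X(P_n))$ along the last row (or column) of the matrix $A_X(P_n) = \diag(x_1,\dots,x_n) - A(P_n)$. Since $P_n$ is a path, vertex $n$ is adjacent only to vertex $n-1$, so the last row of $A_X(P_n)$ has exactly two nonzero entries: $x_n$ in position $(n,n)$ and $-1$ in position $(n,n-1)$.

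\textbf{Cofactor expansion.} Expanding along row $n$ produces two terms. The first is $x_n$ times the minor obtained by deleting row and column $n$, which is precisely $A_X(P_{n-1})$ since the induced subgraph on vertices $\{1,\dots,n-1\}$ is $P_{n-1}$; this gives the term $x_n\det(A_X(P_{n-1}))$. The second term comes from the entry $-1$ in position $(n,n-1)$, carrying a sign $(-1)^{n+(n-1)} = -1$, multiplied by the minor obtained by deleting row $n$ and column $n-1$. I would then argue that this remaining minor, after a further cofactor expansion along its last column (which now contains a single nonzero entry $-1$ coming from the old $(n-1,n)$ position), reduces to $\det(A_X(P_{n-2}))$ up to a sign that combines with the preexisting signs to yield exactly $-\det(A_X(P_{n-2}))$. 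Carefully tracking these signs is the one place where an error could creep in, so the bookkeeping of the two successive sign factors is the step I would treat most attentively; once it resolves, the first identity follows.

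\textbf{From polynomials to spanning trees.} For the second recurrence I would invoke Theorem~\ref{main} (equivalently Lemma~\ref{lemma:lemmacycleintersection}): for a polygon chain $PC_n$ whose weak dual is the path $P_n$, the number of spanning trees is $\tau(PC_n) = \det(C(PC_n)) = \det(A_X(P_n))|_{X = c}$, where $c = (k_1,\dots,k_n)$ records the cycle lengths of the interior faces. Evaluating the polynomial identity at $X = c$ specializes $x_n$ to $k_n$, and the two determinants $\det(A_X(P_{n-1}))$ and $\det(A_X(P_{n-2}))$ specialize to $\tau(PC_{n-1})$ and $\tau(PC_{n-2})$ respectively, since the weak duals of the shorter chains are the initial subpaths $P_{n-1}$ and $P_{n-2}$ and the cycle-length vectors restrict accordingly. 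This immediately yields $\tau(PC_n) = k_n\tau(PC_{n-1}) - \tau(PC_{n-2})$.

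\textbf{Main obstacle.} The genuine mathematical content is entirely in the tridiagonal determinant expansion, and the only subtlety there is the sign tracking in the double cofactor step; everything else is a direct specialization of the already-established correspondence between critical ideals of the weak dual and the sandpile data of the plane graph. I expect no obstruction beyond that, since $A_X(P_n)$ is tridiagonal with $-1$ on the off-diagonals, which is exactly the structure that makes the three-term recurrence standard.
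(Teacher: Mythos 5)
Your proposal is correct and matches the paper's intent: the paper gives no explicit proof, stating only that the relations "follow directly from the determinant of $A_X(P_n)$," which is precisely the tridiagonal cofactor expansion you carry out, and your sign bookkeeping (the two cofactor signs cancel, leaving the inner minor's value $-\det(A_X(P_{n-2}))$) is accurate. The specialization step via Theorem~\ref{thm:criticalideals_ciclematrix} and Lemma~\ref{lemma:lemmacycleintersection} is likewise exactly how the paper obtains the spanning-tree recurrence.
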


In \cite{corrval}, an explicit computation of the determinant of $A_X(P_n)$ was obtained in terms of the matchings.

\begin{lemma}\cite[Corollary 4.5]{corrval}
Let $P_n$ be the path with $n$ vertices.
Then $\det(A_X(P_n))=\sum_{\mu\in M(P_n)}(-1)^{|\mu|}\prod_{v\notin V(\mu)}x_v$, where $M(P_n)$ is the set of matchings of $P_n$.
\end{lemma}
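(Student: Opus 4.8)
The plan is to prove the formula by induction on $n$, using the three-term recurrence for $\det(A_X(P_n))$ already recorded in Lemma~\ref{recursivelemma}. Writing $D_n := \sum_{\mu\in M(P_n)}(-1)^{|\mu|}\prod_{v\notin V(\mu)}x_v$ for the right-hand side, it suffices to check that $D_n$ obeys the same recurrence $D_n = x_n D_{n-1} - D_{n-2}$ together with the initial values $D_0 = 1$ (the empty path, whose only matching is the empty one, giving the empty product $1$) and $D_1 = x_1$ (the single vertex). Since $\det(A_X(P_0)) = 1$ and $\det(A_X(P_1)) = x_1$ agree with these, the induction then forces $D_n = \det(A_X(P_n))$ for all $n$.

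First I would establish the recurrence for $D_n$ by a combinatorial case split on the status of the last vertex $v_n$ in the path on $\{v_1,\dots,v_n\}$. Every matching $\mu\in M(P_n)$ either leaves $v_n$ uncovered or covers it, and in the latter case the only available edge is $\{v_{n-1},v_n\}$, so $\mu$ must contain it. If $v_n$ is uncovered, then $\mu$ is exactly a matching of $P_{n-1}$ and $v_n$ contributes the factor $x_n$ to the product, so these matchings contribute $x_n D_{n-1}$. If $v_n$ is covered, then $\{v_{n-1},v_n\}\in\mu$ and $\mu\setminus\{v_{n-1}v_n\}$ is an arbitrary matching of $P_{n-2}$ on $\{v_1,\dots,v_{n-2}\}$; this edge raises $|\mu|$ by one and hence contributes the sign $(-1)$, so these matchings contribute $-D_{n-2}$. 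Adding the two cases yields $D_n = x_n D_{n-1} - D_{n-2}$, matching Lemma~\ref{recursivelemma} and completing the induction.

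I do not expect a genuine obstacle here; the only points demanding care are bookkeeping conventions. One must fix that $M(P_n)$ includes the empty matching, that $|\mu|$ counts edges, that $V(\mu)$ is the set of covered vertices, and that the empty product equals $1$, so that the base cases come out correctly. A self-contained alternative, avoiding the recurrence, is to expand $\det(A_X(P_n))$ directly by the Leibniz formula: a permutation $\sigma$ contributes a nonzero term only if $(A_X(P_n))_{i,\sigma(i)}\neq 0$ for all $i$, that is, only if $\sigma(i)\in\{i-1,i,i+1\}$. The one technical step on this route is the standard fact that such tridiagonally supported permutations are precisely the involutions that are disjoint products of adjacent transpositions $(i\ i{+}1)$, and that these are in bijection with the matchings of $P_n$ via the edge set $\{\{i,i+1\} : \sigma(i)=i+1\}$. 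Granting this, each fixed vertex $v$ supplies the diagonal entry $x_v$, while each transposition $(i\ i{+}1)$ supplies $(A_X)_{i,i+1}(A_X)_{i+1,i}=(-1)(-1)=1$ and flips the sign of $\sigma$; hence a matching $\mu$ with $k=|\mu|$ edges corresponds to a permutation of sign $(-1)^k$ whose term equals $(-1)^{k}\prod_{v\notin V(\mu)}x_v$, and summing over all matchings reproduces the claimed formula.
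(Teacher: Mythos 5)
Your proof is correct. Note that the paper itself offers no proof of this lemma: it is imported verbatim as Corollary~4.5 of \cite{corrval}, where it arises as a specialization of the general description of the minors of $A_X(G)$ (for paths, the generators of the last critical ideal reduce to the matching sum; compare the $2$-matching description for trees in Lemma~\ref{thm:critidealstrees}). Your argument is therefore a genuinely different, and more elementary, route: the induction via the three-term recurrence $D_n = x_nD_{n-1}-D_{n-2}$ is sound (the case split on whether $v_n$ is covered is exactly the right decomposition, and the base cases $D_0=1$, $D_1=x_1$ match $\det(A_X(P_0))$ and $\det(A_X(P_1))$), and it dovetails nicely with Lemma~\ref{recursivelemma}, which the paper states anyway. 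Your alternative Leibniz expansion is also correct, including the one step that needs justification --- that a permutation with $\sigma(i)\in\{i-1,i,i+1\}$ for all $i$ and $\sigma(i)=i+1$ must satisfy $\sigma(i+1)=i$ (otherwise the displacement propagates to $\sigma(n)=n+1$), so such permutations are exactly disjoint products of adjacent transpositions, in bijection with matchings; the signs work out since each transposition contributes $(-1)(-1)=1$ from the entries and $-1$ from $\operatorname{sign}(\sigma)$. What the citation buys the paper is uniformity with the rest of its critical-ideal machinery; what your proof buys is self-containedness at essentially no extra cost.
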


Next result follows directly from previous lemma and Theorem~\ref{thm:criticalideals_ciclematrix}.

\begin{theorem}\label{coro:PC_n}
Let $PC_n$ be a polygon chain whose stack of polygons have $k_1,\dots,k_n$ sides.
Then the sandpile group $K(PC_n)$ of $PC_n$ is cyclic of order $$\tau(PC_n)=\sum_{\mu\in M(P_n)}(-1)^{|\mu|}\prod_{v\notin V(\mu)}k_v,$$ where $M(P_n)$ is the set of matchings of $P_n$.
\end{theorem}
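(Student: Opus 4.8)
The plan is to treat this as a direct corollary of Theorem~\ref{thm:criticalideals_ciclematrix} together with the matching expansion of $\det(A_X(P_n))$ from \cite[Corollary 4.5]{corrval}, so the argument splits cleanly into establishing cyclicity and then reading off the order. First I would record the structural input: the weak dual of the polygon chain $PC_n$ is the path $P_n$ on $n$ vertices, where the vertex $v$ of $P_n$ corresponds to the interior face bounded by the polygon with $k_v$ sides. Consequently the evaluation vector in Theorem~\ref{thm:criticalideals_ciclematrix} is $c=(k_1,\dots,k_n)$, since the cycle bounding the $v$-th finite face has length exactly $k_v$.

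For cyclicity, I would invoke the identity $\gamma(P_n)=n-1$ recorded above (with the converse from \cite[Corollary 3.9]{corrval1}). By the inequality $f_1(C(PC_n))\geq\gamma(P_n)$ supplied by Theorem~\ref{thm:criticalideals_ciclematrix}, at least $n-1$ of the invariant factors of the cycle-intersection matrix $C(PC_n)$ equal $1$. Since $PC_n$ is connected, Lemma~\ref{lemma:lemmacycleintersection} gives $\det(C(PC_n))=\tau(PC_n)\neq 0$, so $C(PC_n)$ is a nonsingular $n\times n$ matrix and therefore has exactly $n$ invariant factors. Hence at most one invariant factor exceeds $1$, and $K(PC_n)\cong\coker(C(PC_n))$ is cyclic, generated by its single possibly-nontrivial factor $d_n=\tau(PC_n)$.

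For the order, I would use the consequence of Theorem~\ref{thm:criticalideals_ciclematrix} (equivalently Lemma~\ref{lemma:lemmacycleintersection}) that $\tau(PC_n)=\det(A_X(P_n))|_{X=c}$, and then substitute the preceding lemma's formula $\det(A_X(P_n))=\sum_{\mu\in M(P_n)}(-1)^{|\mu|}\prod_{v\notin V(\mu)}x_v$. Evaluating at $x_v=k_v$ for each $v$ yields the claimed sum immediately.

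There is essentially no hard calculation here; the only point demanding care is the cyclicity argument, specifically the bookkeeping that guarantees $C(PC_n)$ has exactly $n$ invariant factors (nonsingularity via Lemma~\ref{lemma:lemmacycleintersection}) so that the bound $f_1\geq n-1$ forces at most a single nontrivial factor. Everything else reduces to the substitution $X=c$, so I expect the write-up to be very short.
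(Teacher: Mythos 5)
Your proposal is correct and follows essentially the same route as the paper: the paper derives this theorem directly from Theorem~\ref{thm:criticalideals_ciclematrix} together with the matching expansion of $\det(A_X(P_n))$, and obtains cyclicity from $\gamma(P_n)=n-1$ exactly as you do. The only difference is that you spell out the bookkeeping (nonsingularity of $C(PC_n)$ giving exactly $n$ invariant factors) that the paper leaves implicit, which is a welcome clarification rather than a deviation.
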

Now we proceed to analyze an special family of polygon chains. A polygon chain is called a \textit{polygon ladder} 
if each of its polygons has the same number of sides.

\begin{example}\label{exa:3.1}
Let $PL_n^k$ be the polygon ladder consisting of $n$ $k$-polygons with $k\geq 3$. By Theorem \ref{coro:PC_n} its sandpile group is cyclic of order
$$\tau(PL_n^k)=\sum_{\mu\in M(P_n)}(-1)^{|\mu|}\prod_{v\notin V(\mu)}k.$$
Let $\nu (G)$ be the matching number of $G$. It is easy to check that the number of matchings of $P_n$ of size $i$ is $\binom{n-i}{i}$ for $i=1,\ldots,\nu (P_n)$. 
If $n$ is even, say $n=2m$ for some positive integer, then $\nu (P_n) =m$. 
Similarly, when $n$ is odd. 
Assume $n=2m+1$ for some positive integer $m$, then $\nu (P_n) =m$. 
In both cases the matching number of $P_n$ is $\lfloor n/2 \rfloor$. 
Therefore,
$$\tau(PL_n^k)=\sum_{i=0}^{\lfloor n/2 \rfloor} (-1)^{i} \binom{n-i}{i}  k^{n-2i}, \text{ for } n\geq 1.$$
Since $0<\frac{4}{k^2}<1$, we have that\[\tau(PL_n^k) = k^n\, _2 F_1\left(\frac{1}{2}-\frac{n}{2}, -\frac{n}{2};-n;\frac{4}{k^2}\right) \text{ for }n\geq 1, \]
where $_2 F_1(a,b;c;x)$ is the Gauss's hypergeometric function.
Next we present three more specific instances. 
First, let us address the case of $PL_n^4=P_2\Box P_n$ (also known as the \textit{ladder graph} or the $2\times n$ grid). 
We have that $K(PL_n^4)$ is a cyclic group of order
$$\tau(PL_n^4) = \frac{1}{2\sqrt{3}} \big( (2+\sqrt{3})^{n+1} - (2-\sqrt{3})^{n+1} \big), \text{ for } n\geq 1 .$$ 
On the other hand, consider $PL_n^6$ (also called as an \textit{hexagonal chain}). Hence $K(PL_n^6)$ is a cyclic group of order
$$\tau(PL_n^6) = \frac{1}{4\sqrt{2}} \big( (3+2\sqrt{2})^{n+1} - (3-2\sqrt{2})^{n+1} \big), \text{ for } n\geq 1.$$
Lastly, consider the polygonal ladder with $n$ octagons $PL_n^8$. 
In this case we have that
$$\tau(PL_n^8) = \frac{1}{2\sqrt{15}} \big( (4+\sqrt{15})^{n+1} - (4-\sqrt{15})^{n+1} \big), \text{ for } n\geq 1.$$
In Table \ref{tab:polygonladdertable} we list the value of $\big|K(PL_n^k)\big|$ for  $k=4,6,8$ and $1\leq n\leq 11$.
\begin{table}[h]
    \centering
    \begin{tabular}{|c|c|c|c|}
        \hline
        $n$ & $\tau(PL_n^4)$ & $\tau(PL_n^6)$ & $\tau(PL_n^8)$\\
        \hline
        $1$ & $4$ & $6$ & $8$\\
        \hline
        $2$&$15$&$35$&$63$\\
        \hline
        $3$&$56$&$204$&$496$\\
        \hline
        $4$&$209$&$1189$&$3905$\\
        \hline
        $5$&$780$&$6930$&$30744$\\
        \hline
        $6$&$2911$&$40391$&$242047$\\
        \hline
        $7$&$10864$&$235416$&$1905632$\\
        \hline
        $8$&$40545$&$1372105$&$15003009$\\
        \hline
        $9$&$151316$&$7997214$&$118118440$\\
        \hline
        $10$&$564719$&$46611179$&$929944511$\\
        \hline
        $11$&$2107560$&$271669860$ &$7321437648$\\
        \hline
    \end{tabular}
    \caption{$\tau(PL_n^4)$, $\tau(PL_n^6)$ and $\tau(PL_n^8)$ for $1\leq n \leq 11$}
    \label{tab:polygonladdertable}
\end{table}
\end{example}

\subsection{Outerplane graphs whose weak dual is a starlike tree}

We denote by $S(n_1,\dots,n_l)
$ a {\it starlike
tree} in which removing the central vertex leaves disjoint paths $P_{n_1}$,\dots,$P_{n_l}$ in which exactly one endpoint of each path is a leaf on $S(n_1,\dots,n_l)
$.

Let $C_l= v_1 e_1 v_2 e_2 \cdots v_l e_l v_1$ be a cycle of length $l$, and $PC_{n_1},\dots,PC_{n_l}$ be $l$ polygon chains. 
A {\it polygon flower} $F = F (C_l ; PC_{n_1},\dots, PC_{n_l} )$ is constructed by identifying, for $i\in[l]$, the edges $e_i\in C_l$ and $e'_i\in PC_{n_i}$ such that $e'_i$ is in the first or the last polygon of $PC_{n_i}$ and is not contained in another polygon of this polygon chain.
The weak dual of an outerplane embedding of polygon flowers are starlike trees. The number of spanning trees of $F$ is closely related to the number of spanning trees of its polygon chains

\begin{theorem}\cite[Corollary 4.2]{cm}\label{cor:bojan}\label{tauF}
Let $F = F(C_l; PC_{n_1}, . . . , PC_{n_l})$ be a polygon flower. Then
\[ \tau (F)=\left( \prod_{j=1}^l \tau (PC_{n_j})\right) \sum_{i=1}^l \frac{\tau (PC_{n_i}/ e_i) }{\tau (PC_{n_i}) } \]
where $PC_{n_i}/ e_i$ denotes the graph obtained from $PC_{n_i}$ by contracting the edge $e_i$.
\end{theorem}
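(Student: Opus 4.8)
The plan is to prove the formula through the cycle-intersection matrix of $F$ together with a Schur-complement (bordered-determinant) expansion along the central face, after which everything reduces to one deletion-contraction identity on each arm. First I would record the shape of the weak dual $F_*=S(n_1,\dots,n_l)$: its central vertex is the face bounded by $C_l$, which shares exactly the edge $e_i$ with the first polygon of the $i$-th arm, and removing this vertex leaves $l$ disjoint paths, one for each chain $PC_{n_i}$. Ordering the interior faces so that the central face comes first and the faces of each $PC_{n_i}$ (with the face meeting the center listed first) form a consecutive block, the cycle-intersection matrix becomes bordered block-diagonal,
\[
C(F)=\begin{pmatrix} l & -b\trans\\ -b & B\end{pmatrix},\qquad B=\diag(B_1,\dots,B_l),
\]
where $B_i=C(PC_{n_i})$ is the tridiagonal cycle-intersection matrix of the $i$-th chain, the corner entry $l$ is the length of the central cycle, and $b$ is the $0/1$ vector picking out the first face of each arm. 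By Lemma~\ref{lemma:lemmacycleintersection}, $\tau(F)=\det C(F)$ and $\det B_i=\tau(PC_{n_i})$.

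Next I would apply the bordered-determinant identity. Since $\det B=\prod_j\tau(PC_{n_j})\neq 0$, the Schur complement gives $\det C(F)=\det(B)\bigl(l-b\trans B^{-1}b\bigr)$, and because $B$ is block-diagonal with $b$ equal to the first standard basis vector inside each block, $b\trans B^{-1}b=\sum_{i=1}^{l}(B_i^{-1})_{11}$. Writing $(B_i^{-1})_{11}=M_i/\det B_i$, where $M_i$ is the $(1,1)$-minor of $B_i$ (delete the row and column of the first face), and using $l=\sum_{i=1}^{l}1$, this rearranges to
\[
\det C(F)=\Bigl(\prod_{j}\tau(PC_{n_j})\Bigr)\sum_{i=1}^{l}\frac{\det B_i-M_i}{\tau(PC_{n_i})}.
\]
Thus it remains only to identify each numerator $\det B_i-M_i$ with $\tau(PC_{n_i}/e_i)$, after which the displayed sum is exactly the claimed formula.

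The crux, and the step I expect to demand the most care, is this last identity. By deletion-contraction, $\tau(PC_{n_i}/e_i)=\tau(PC_{n_i})-\tau(PC_{n_i}\setminus e_i)=\det B_i-\tau(PC_{n_i}\setminus e_i)$, so it suffices to prove $M_i=\tau(PC_{n_i}\setminus e_i)$. For this I would use that $B_i$ is the reduced Laplacian of $(PC_{n_i})^{*}$ with the outer-face vertex removed, so $M_i$ is the principal minor obtained by deleting \emph{both} the outer-face vertex and the first-arm face. By the all-minors matrix-tree theorem, $M_i$ counts the spanning $2$-forests of $(PC_{n_i})^{*}$ that separate those two vertices; reinserting the single dual edge $e_i^{*}$ joining them sets up a bijection with the spanning trees of $(PC_{n_i})^{*}$ using $e_i^{*}$, and planar tree-complementation turns these into the spanning trees of $PC_{n_i}$ avoiding $e_i$, namely $\tau(PC_{n_i}\setminus e_i)$. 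The delicate points to verify are that $e_i^{*}$ is genuinely a single edge whose deletion/contraction in the dual matches contracting $e_i$ in $PC_{n_i}$, and that the other boundary edges of the first polygon (possibly parallel dual edges between the outer face and the first-arm face) do not disturb the bijection; once these are confirmed, substituting $\det B_i-M_i=\tau(PC_{n_i}/e_i)$ into the sum completes the proof.
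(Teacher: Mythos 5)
Your argument is correct, but note that the paper itself does not prove this statement: it is imported verbatim as \cite[Corollary 4.2]{cm}, so there is no in-paper proof to match. What you have written is a self-contained derivation entirely inside the paper's own framework (Lemma~\ref{lemma:lemmacycleintersection} plus the bordered structure of $C(F)$), which is a different route from Chen and Mohar's original treatment via reductions of the relation matrix of the flower. Each step checks out: the block form $C(F)=\left(\begin{smallmatrix} l & -b\trans\\ -b & B\end{smallmatrix}\right)$ with $B=\diag(C(PC_{n_1}),\dots,C(PC_{n_l}))$ is right because the central face is bounded exactly by $e_1,\dots,e_l$ and meets only the first polygon of each arm in a single edge; the Schur complement and the identity $b\trans B^{-1}b=\sum_i(B_i^{-1})_{11}=\sum_i M_i/\det B_i$ are valid since each $\det B_i=\tau(PC_{n_i})>0$; and $\det B_i-M_i=\tau(PC_{n_i}/e_i)$ follows from deletion--contraction once $M_i=\tau(PC_{n_i}\setminus e_i)$ is established. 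Your duality argument for that last identity (principal $2\times 2$-deleted minor counts separating $2$-forests, which biject with dual spanning trees through $e_i^*$, which complement to spanning trees of $PC_{n_i}$ avoiding $e_i$) is sound, and the parallel dual edges between the outer face and the first-polygon face are harmless exactly as you suspect, since the bijection only ever inserts or removes the single distinguished edge $e_i^*$. One simplification worth recording: the detour through duality is unnecessary, because deleting the first row and column of $B_i$ leaves the cycle-intersection matrix of the shorter chain on polygons $2,\dots,n_i$, whose determinant is $\tau$ of that chain by Lemma~\ref{lemma:lemmacycleintersection}; since $PC_{n_i}\setminus e_i$ is that shorter chain with pendant trees attached, $M_i=\tau(PC_{n_i}\setminus e_i)$ follows immediately. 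With that shortcut your proof is both complete and arguably more elementary than the cited source.
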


Moreover, in \cite{cm} the sandpile group of the polygon flowers were obtained in terms of the spanning tree numbers of the polygon chains.

\begin{lemma}\cite[Theorem 4.3]{cm}\label{teo:bojan}
Let $F=F(C_l;PC_{n_1},\dots, PC_{n_l})$ be a polygon flower. 
For $j\in[l-2]$, $\Delta_j=\gcd( \tau (PC_{n_{i_1}} ) \cdots \tau(PC_{n_{i_j}} ) : 1 \leq i_1 < \cdots < i_j \leq l)$. Then
\[
K(F) = \mathbb{Z}_{\Delta_1} \oplus \mathbb{Z}_\frac{\Delta_2}{\Delta_1} \oplus\cdots\oplus\mathbb{ Z}_\frac{\Delta_{t-2}}{\Delta_{t-3}}\oplus\mathbb{Z}_\frac{\tau(F)}{\Delta_{t-2}}.
\]
\end{lemma}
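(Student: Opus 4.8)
The plan is to derive the statement entirely from the critical‑ideal machinery of Theorem~\ref{main}, applied to the weak dual of $F$, which is the starlike tree $T=S(n_1,\dots,n_l)$. Write $v_0$ for its central vertex (the face bounded by $C_l$) and $B_1,\dots,B_l$ for the branch paths, so that $B_i\cong P_{n_i}$ and $N:=|V(T)|=1+\sum_i n_i$. By Theorem~\ref{main} it suffices to compute, for each $k$, the integer $\Delta_k=\gcd\{d_X(\mathcal M)|_{X=c}:\mathcal M\in 2M^*_k(T^l)\}$. The first step is to note that $d_X(\mathcal M)=\det A_X(T)[V(\ell(\mathcal M))]=\det A_X(T[U])$, where $U=V(\ell(\mathcal M))$ is the loop‑set, since a principal submatrix of $A_X(T)$ is precisely the $A_X$‑matrix of the induced subgraph $T[U]$. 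Because $T[U]$ is a disjoint union of subpaths (together with a substar when $v_0\in U$), the evaluated generator $\det A_X(T[U])|_{X=c}$ factors over the connected components of $T[U]$. In particular, if $U=\bigcup_{i\in P}V(B_i)$ with $v_0\notin U$, then $T[U]=\bigsqcup_{i\in P}B_i$ and the generator evaluates to $\prod_{i\in P}\det A_X(B_i)|_{X=c}=\prod_{i\in P}\tau(PC_{n_i})$, because each $B_i$ is the weak dual of the polygon chain $PC_{n_i}$, whose sandpile group is cyclic of order $\tau(PC_{n_i})$ (Theorem~\ref{thm:criticalideals_ciclematrix} and Lemma~\ref{lemma:lemmacycleintersection}).

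The heart of the argument is a combinatorial classification of the \emph{minimal} $2$‑matchings of $T^l$. Since $\delta\big(S(n_1,\dots,n_l)\big)=l-1$ (deleting $v_0$ leaves exactly $l$ paths), we have $\gamma(T)=\nu_2(T)=N-l+1$, so $I_k(T)=\langle 1\rangle$ and hence $\Delta_k=1$ for every $k\le N-l+1$. I would then prove the key lemma: for $1\le j\le l-2$, the loop‑sets of the minimal $2$‑matchings of size $N-l+1+j$ are \emph{exactly} the full‑branch unions $U=\bigcup_{i\in P}V(B_i)$ with $|P|=j$, while the unique minimal $2$‑matching of size $N$ is the all‑loops one with $U=V(T)$. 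The size bookkeeping is the easy half: looping every vertex of the branches in $P$ and extending by a maximum $2$‑matching of the complementary substar $T[\{v_0\}\cup\bigcup_{i\notin P}V(B_i)]$ gives a $2$‑matching, and since that substar has $l-|P|$ branches, hence $\delta=l-|P|-1$ when $l-|P|\ge 2$, its order comes out to $N-l+1+|P|$. The harder half, and the main obstacle, is to show these are the \emph{only} minimal loop‑sets. One must rule out loop‑sets that split a branch into a looped and an unlooped part, and rule out looping $v_0$ or using more than $j$ full branches at a given size. For the first I would argue that a loop on a vertex adjacent to an unlooped vertex can always be traded for a non‑loop edge without changing the size, contradicting minimality; this is exactly where a careful analysis of maximum $2$‑matchings in the forest $T[V\setminus U]$ (via the path‑partition parameter $\delta$) is required. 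For the second I would observe that a union of $j'>j$ full branches realizing the same size strictly contains a union of $j$ full branches realizing that size, so it is not minimal.

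Granting the classification, the computation concludes cleanly. For $1\le j\le l-2$ the critical ideal $I_{N-l+1+j}(T)$ is generated, after evaluation, by the $j$‑fold products $\{\prod_{i\in P}\tau(PC_{n_i}):|P|=j\}$, so
\[
\Delta_{N-l+1+j}\big(C(F)\big)=\gcd\Big(\textstyle\prod_{i\in P}\tau(PC_{n_i}):|P|=j\Big)=\Delta_j ,
\]
while $\Delta_N\big(C(F)\big)=\det A_X(T)|_{X=c}=\tau(F)$ by Lemma~\ref{lemma:lemmacycleintersection}. Feeding these into the relation $d_k=\Delta_k/\Delta_{k-1}$ of Theorem~\ref{main}, the first $N-l+1$ invariant factors equal $1$ and the remaining $l-1$ are $\Delta_1,\ \Delta_2/\Delta_1,\ \dots,\ \Delta_{l-2}/\Delta_{l-3},\ \tau(F)/\Delta_{l-2}$, which is exactly the claimed decomposition of $K(F)$.

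Thus the only genuinely delicate ingredient is the classification of the minimal $2$‑matchings of the starlike tree; once it is in hand, everything else is determinant factorization over the branches and the dictionary supplied by Theorem~\ref{main}. (As a sanity check one verifies the star case $n_1=\dots=n_l=1$ by hand, where the minimal loop‑sets are the single leaves and one recovers $\Delta_1=\gcd_i\tau(PC_{n_i})$ directly.)
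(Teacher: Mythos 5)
The paper does not actually prove this statement: it is imported verbatim from Chen and Mohar \cite[Theorem 4.3]{cm}, whose argument works directly with the flower itself. Your proposal is therefore a genuinely different route, re-deriving the result inside the paper's own framework by applying Theorem~\ref{main} to the starlike weak dual $T=S(n_1,\dots,n_l)$ and classifying the minimal $2$-matchings of $T^l$. The computational steps all check out: $d_X(\mathcal M)=\det A_X(T[U])$ factors over the components of $T[U]$; the evaluation of $\det A_X(B_i)$ at the polygon lengths is $\pm\tau(PC_{n_i})$ by Theorem~\ref{thm:criticalideals_ciclematrix} applied to $PC_{n_i}$ (identifying $e_i$ with $e_i'$ changes no face length); $\gamma(T)=N-l+1$ with $N=|V(T)|$ via $\delta(T)=l-1$; and your claimed classification of the minimal loop-sets is in fact true. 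What this buys is a self-contained proof of Lemma~\ref{teo:bojan} from Theorem~\ref{main}, turning the paper's subsequent ``equivalent result stated in terms of matchings'' into a corollary rather than a translation of an external theorem.

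The one genuine gap is the converse half of your classification, which you only sketch, and the heuristic you offer for it (``trade a loop on a vertex adjacent to an unlooped vertex for a non-loop edge'') does not work as stated: the unlooped neighbour may already be saturated by two edges of the $2$-matching, so no trade is available. The clean argument is a deficiency count. For a loop-set $U$, the maximum size of a $2$-matching of $T^l$ with loops exactly on $U$ is $N-\sum_{C}\delta(C)$, the sum running over the components $C$ of $T-U$ (with $\delta(C)=1$ for a path or an isolated vertex, and $\delta(C)=l'-1$ for a starlike component with $l'\ge 3$ branches). One then checks, splitting on whether $v_0\in U$, that $\sum_C\delta(C)\le l-1-j$ forces $U$ to contain at least $j$ full branches, while a union of exactly $j$ full branches already attains deficiency $l-1-j$ and every proper subset of it has strictly larger deficiency; hence the minimal loop-sets at size $N-l+1+j$ are precisely the $j$-fold unions of full branches. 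With that lemma supplied, your proof is complete and correct (modulo the paper's typo $t=l$ in the statement).
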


By using Lemma~\ref{teo:bojan} and Theorem~\ref{coro:PC_n}, we can obtain an equivalent result stated in terms of matchings of the path and the length of the polygons.

\begin{theorem}
Let $F=F(C_l;PC_{n_1},\dots, PC_{n_l})$ be a polygon flower, where  $k^i_1,\dots,k^i_{n_i}$ are the sizes of the polygons of $PC_{n_i}$. 
Let 
\[
\omega(n_i,k^i_1,\dots,k^i_{n_i})=\sum_{\mu\in M(P_{n_i})}(-1)^{|\mu|}\prod_{v\notin V(\mu)}k^i_v.
\]
For $j\in[l-2]$, $\Delta_j=\gcd( \omega(n_{i_1},k^{i_1}_1,\dots,k^{i_1}_{n_{i_1}}) \cdots \omega(n_{i_j},k^{i_j}_1,\dots,k^{i_j}_{n_{i_j}}) : 1 \leq i_1 < \cdots < i_j \leq l)$. Then
\[
K(F) = \mathbb{Z}_{\Delta_1} \oplus \mathbb{Z}_\frac{\Delta_2}{\Delta_1} \oplus\cdots\oplus\mathbb{ Z}_\frac{\Delta_{t-2}}{\Delta_{t-3}}\oplus\mathbb{Z}_\frac{\tau(F)}{\Delta_{t-2}}.
\]
\end{theorem}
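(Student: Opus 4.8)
The plan is to recognize that this statement is a direct translation of Lemma~\ref{teo:bojan} through the matching formula of Theorem~\ref{coro:PC_n}, so the entire argument reduces to a substitution. First I would apply Theorem~\ref{coro:PC_n} to each polygon chain $PC_{n_i}$ individually: since the stack of polygons of $PC_{n_i}$ has side-lengths $k^i_1,\dots,k^i_{n_i}$, the theorem gives that $K(PC_{n_i})$ is cyclic of order
\[
\tau(PC_{n_i}) = \sum_{\mu\in M(P_{n_i})}(-1)^{|\mu|}\prod_{v\notin V(\mu)}k^i_v = \omega(n_i,k^i_1,\dots,k^i_{n_i}).
\]
Thus the quantity $\omega(n_i,k^i_1,\dots,k^i_{n_i})$ introduced in the statement is nothing more than $\tau(PC_{n_i})$ written out combinatorially as a signed sum over matchings of the weak-dual path $P_{n_i}$.

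Next I would substitute this identity into the defining expression for $\Delta_j$. In Lemma~\ref{teo:bojan} the quantity $\Delta_j$ is the $\gcd$ of the products $\tau(PC_{n_{i_1}})\cdots\tau(PC_{n_{i_j}})$ ranging over $1\le i_1<\cdots<i_j\le l$; replacing each factor $\tau(PC_{n_{i_r}})$ by the equal quantity $\omega(n_{i_r},k^{i_r}_1,\dots,k^{i_r}_{n_{i_r}})$ shows termwise that the $\Delta_j$ of Lemma~\ref{teo:bojan} coincides with the $\Delta_j$ defined in the present statement. Consequently the two families $\{\Delta_j\}_{j\in[l-2]}$ are literally the same integers, and the final invariant factor $\tau(F)/\Delta_{l-2}$ is likewise unchanged. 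Feeding these identical $\Delta_j$ back into the group isomorphism asserted by Lemma~\ref{teo:bojan} then produces the claimed decomposition of $K(F)$ verbatim.

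There is essentially no genuine obstacle here: the content of the result is entirely carried by Theorem~\ref{coro:PC_n} and Lemma~\ref{teo:bojan}, and the only point requiring care is the bookkeeping of indices, namely checking that the side-lengths $k^i_1,\dots,k^i_{n_i}$ of the polygons in $PC_{n_i}$ are matched correctly with the vertices of the path $P_{n_i}$ that serves as the weak dual of $PC_{n_i}$, so that the matching sum of Theorem~\ref{coro:PC_n} is evaluated at the correct cycle lengths. (I would also read the symbol $t$ appearing in the displayed decomposition as $l$, consistent with the index range $j\in[l-2]$.) With that identification fixed, the substitution is immediate and the proof is complete.
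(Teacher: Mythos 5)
Your proposal matches the paper exactly: the theorem is stated there as an immediate consequence of Lemma~\ref{teo:bojan} and Theorem~\ref{coro:PC_n}, obtained by substituting $\tau(PC_{n_i})=\omega(n_i,k^i_1,\dots,k^i_{n_i})$ into the $\gcd$ expressions, which is precisely your argument. Your reading of $t$ as $l$ is also the correct interpretation of the index in the displayed decomposition.
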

Finally, we complement Example \ref{exa:3.1} analyzing a certain polygon flower constructed with polygon ladders.
\begin{example}
Let $F=F(C_5;PC_{n_1},PC_{n_2},PC_{n_3},PC_{n_4},PC_{n_5})$ and set the polygon chains of $F$ as $PC_{n_1}=PL_5^4,\ PC_{n_2}=PL_8^4,\ PC_{n_3}=PL_2^6,\ PC_{n_4}=PL_5^6\text{ and }PC_{n_5}=PL_5^8$. 
Moreover, if $1\leq j\leq 5$ and $\tau (PC_{n_j}) = \tau (PL_n^k)$ with $n\geq 2$ and $k\geq 3$, by Lemma \ref{recursivelemma} we have that \[\tau (PC_{n_j}/e_j) = (k-1)\tau (PL_{n-1}^k)-\tau (PL_{n-2}^k)=\tau (PL_n^k)-\tau (PL_{n-1}^k)\] Therefore, by Theorem \ref{tauF} and using Table \ref{tab:polygonladdertable} $$ \tau (F)=\left( \prod_{j=1}^5 \tau (PC_{n_j})\right) \sum_{i=1}^5 \frac{\tau (PC_{n_i}/ e_i) }{\tau (PC_{n_i}) }$$
\[=(235827017145720000)\left( \frac{571}{780} + \frac{29681}{40545} + \frac{29}{35} + \frac{5741}{6930} + \frac{26839}{30744}\right).\]

Hence, $\Delta_1=1,\ \Delta_2=15,\, \Delta_3=9450$ and $\tau(F)=941912914331277000$. 
Thus the sandpile group of $F$ is $\mathbb{Z}_{15}\oplus \mathbb{Z}_{630}\oplus \mathbb{Z}_{99673324267860}$.
\end{example}

\section{Identity element of the sandpile group of outerplanar graphs}\label{sec:identity}
Throughout this section we will consider outerplane graphs to be biconnected unless otherwise stated.
Determining the combinatorial structure of the recurrent configurations for outerplanar graph seems to be a more challenging problem since it depends on the sink vertex, and the sandpile groups are not always cyclic.
However, we will consider the dual of an outerplane graph since the vertex associated with the outer face is a natural sink vertex, the weak dual is a tree and from a recurrent configuration of this dual graph we can recover the associated recurrent configurations of the outerplane graph with different sink vertices.

\begin{figure}[ht!]
    \centering
    \begin{tabular}{cccccccccc}
    
    \begin{tikzpicture}[scale=0.5,thick]
    \tikzstyle{every node}=[minimum width=0pt, inner sep=1pt, circle]
        \draw (0:1) node[draw] (0) { \tiny 0};
        \draw (180:1) node[draw] (1) { \tiny 1};
        \draw  (0) edge (1);
    \draw (0,-2) node () {$2_{0}$};
    \end{tikzpicture}
&
    \begin{tikzpicture}[scale=0.5,thick]
    \tikzstyle{every node}=[minimum width=0pt, inner sep=1pt, circle]
        \draw (0:1) node[draw] (0) { \tiny 0};
        \draw (120:1) node[draw] (1) { \tiny 1};
        \draw (240:1) node[draw] (2) { \tiny 2};
        \draw  (0) edge (1);
        \draw  (0) edge (2);
    \draw (0,-2) node () {$3_{0}$};
    \end{tikzpicture}
&
    \begin{tikzpicture}[scale=0.5,thick]
    \tikzstyle{every node}=[minimum width=0pt, inner sep=1pt, circle]
        \draw (0:1) node[draw] (0) { \tiny 0};
        \draw (90:1) node[draw] (1) { \tiny 1};
        \draw (180:1) node[draw] (2) { \tiny 2};
        \draw (270:1) node[draw] (3) { \tiny 3};
        \draw  (0) edge (1);
        \draw  (0) edge (3);
        \draw  (1) edge (2);
    \draw (0,-2) node () {$4_{0}$};
    \end{tikzpicture}
&
    \begin{tikzpicture}[scale=0.5,thick]
    \tikzstyle{every node}=[minimum width=0pt, inner sep=1pt, circle]
        \draw (0:1) node[draw] (0) { \tiny 0};
        \draw (90:1) node[draw] (1) { \tiny 1};
        \draw (180:1) node[draw] (2) { \tiny 2};
        \draw (270:1) node[draw] (3) { \tiny 3};
        \draw  (0) edge (1);
        \draw  (0) edge (2);
        \draw  (0) edge (3);
    \draw (0,-2) node () {$4_{1}$};
    \end{tikzpicture}
&
    \begin{tikzpicture}[scale=0.5,thick]
    \tikzstyle{every node}=[minimum width=0pt, inner sep=1pt, circle]
        \draw (0:1) node[draw] (0) { \tiny 0};
        \draw (72:1) node[draw] (1) { \tiny 1};
        \draw (144:1) node[draw] (2) { \tiny 2};
        \draw (216:1) node[draw] (3) { \tiny 3};
        \draw (288:1) node[draw] (4) { \tiny 4};
        \draw  (0) edge (1);
        \draw  (0) edge (3);
        \draw  (1) edge (2);
        \draw  (3) edge (4);
    \draw (0,-2) node () {$5_{0}$};
    \end{tikzpicture}
&
    \begin{tikzpicture}[scale=0.5,thick]
    \tikzstyle{every node}=[minimum width=0pt, inner sep=1pt, circle]
        \draw (0:1) node[draw] (0) { \tiny 0};
        \draw (72:1) node[draw] (1) { \tiny 1};
        \draw (144:1) node[draw] (2) { \tiny 2};
        \draw (216:1) node[draw] (3) { \tiny 3};
        \draw (288:1) node[draw] (4) { \tiny 4};
        \draw  (0) edge (1);
        \draw  (0) edge (3);
        \draw  (0) edge (4);
        \draw  (1) edge (2);
    \draw (0,-2) node () {$5_{1}$};
    \end{tikzpicture}
&
    \begin{tikzpicture}[scale=0.5,thick]
    \tikzstyle{every node}=[minimum width=0pt, inner sep=1pt, circle]
        \draw (0:1) node[draw] (0) { \tiny 0};
        \draw (72:1) node[draw] (1) { \tiny 1};
        \draw (144:1) node[draw] (2) { \tiny 2};
        \draw (216:1) node[draw] (3) { \tiny 3};
        \draw (288:1) node[draw] (4) { \tiny 4};
        \draw  (0) edge (1);
        \draw  (0) edge (2);
        \draw  (0) edge (3);
        \draw  (0) edge (4);
    \draw (0,-2) node () {$5_{2}$};
    \end{tikzpicture}
\\
    \begin{tikzpicture}[scale=0.5,thick]
    \tikzstyle{every node}=[minimum width=0pt, inner sep=1pt, circle]
        \draw (0:1) node[draw] (0) { \tiny 0};
        \draw (60:1) node[draw] (1) { \tiny 1};
        \draw (120:1) node[draw] (2) { \tiny 2};
        \draw (180:1) node[draw] (3) { \tiny 3};
        \draw (240:1) node[draw] (4) { \tiny 4};
        \draw (300:1) node[draw] (5) { \tiny 5};
        \draw  (0) edge (1);
        \draw  (0) edge (4);
        \draw  (1) edge (2);
        \draw  (2) edge (3);
        \draw  (4) edge (5);
    \draw (0,-2) node () {$6_{0}$};
    \end{tikzpicture}
&
    \begin{tikzpicture}[scale=0.5,thick]
    \tikzstyle{every node}=[minimum width=0pt, inner sep=1pt, circle]
        \draw (0:1) node[draw] (0) { \tiny 0};
        \draw (60:1) node[draw] (1) { \tiny 1};
        \draw (120:1) node[draw] (2) { \tiny 2};
        \draw (180:1) node[draw] (3) { \tiny 3};
        \draw (240:1) node[draw] (4) { \tiny 4};
        \draw (300:1) node[draw] (5) { \tiny 5};
        \draw  (0) edge (1);
        \draw  (0) edge (4);
        \draw  (1) edge (2);
        \draw  (1) edge (3);
        \draw  (4) edge (5);
    \draw (0,-2) node () {$6_{1}$};
    \end{tikzpicture}
&
    \begin{tikzpicture}[scale=0.5,thick]
    \tikzstyle{every node}=[minimum width=0pt, inner sep=1pt, circle]
        \draw (0:1) node[draw] (0) { \tiny 0};
        \draw (60:1) node[draw] (1) { \tiny 1};
        \draw (120:1) node[draw] (2) { \tiny 2};
        \draw (180:1) node[draw] (3) { \tiny 3};
        \draw (240:1) node[draw] (4) { \tiny 4};
        \draw (300:1) node[draw] (5) { \tiny 5};
        \draw  (0) edge (1);
        \draw  (0) edge (4);
        \draw  (0) edge (5);
        \draw  (1) edge (2);
        \draw  (1) edge (3);
    \draw (0,-2) node () {$6_{2}$};
    \end{tikzpicture}
&
    \begin{tikzpicture}[scale=0.5,thick]
    \tikzstyle{every node}=[minimum width=0pt, inner sep=1pt, circle]
        \draw (0:1) node[draw] (0) { \tiny 0};
        \draw (60:1) node[draw] (1) { \tiny 1};
        \draw (120:1) node[draw] (2) { \tiny 2};
        \draw (180:1) node[draw] (3) { \tiny 3};
        \draw (240:1) node[draw] (4) { \tiny 4};
        \draw (300:1) node[draw] (5) { \tiny 5};
        \draw  (0) edge (1);
        \draw  (0) edge (3);
        \draw  (0) edge (5);
        \draw  (1) edge (2);
        \draw  (3) edge (4);
    \draw (0,-2) node () {$6_{3}$};
    \end{tikzpicture}
&
    \begin{tikzpicture}[scale=0.5,thick]
    \tikzstyle{every node}=[minimum width=0pt, inner sep=1pt, circle]
        \draw (0:1) node[draw] (0) { \tiny 0};
        \draw (60:1) node[draw] (1) { \tiny 1};
        \draw (120:1) node[draw] (2) { \tiny 2};
        \draw (180:1) node[draw] (3) { \tiny 3};
        \draw (240:1) node[draw] (4) { \tiny 4};
        \draw (300:1) node[draw] (5) { \tiny 5};
        \draw  (0) edge (1);
        \draw  (0) edge (3);
        \draw  (0) edge (4);
        \draw  (0) edge (5);
        \draw  (1) edge (2);
    \draw (0,-2) node () {$6_{4}$};
    \end{tikzpicture}
&
    \begin{tikzpicture}[scale=0.5,thick]
    \tikzstyle{every node}=[minimum width=0pt, inner sep=1pt, circle]
        \draw (0:1) node[draw] (0) { \tiny 0};
        \draw (60:1) node[draw] (1) { \tiny 1};
        \draw (120:1) node[draw] (2) { \tiny 2};
        \draw (180:1) node[draw] (3) { \tiny 3};
        \draw (240:1) node[draw] (4) { \tiny 4};
        \draw (300:1) node[draw] (5) { \tiny 5};
        \draw  (0) edge (1);
        \draw  (0) edge (2);
        \draw  (0) edge (3);
        \draw  (0) edge (4);
        \draw  (0) edge (5);
    \draw (0,-2) node () {$6_{5}$};
    \end{tikzpicture}
&
    \begin{tikzpicture}[scale=0.5,thick]
    \tikzstyle{every node}=[minimum width=0pt, inner sep=1pt, circle]
        \draw (0:1) node[draw] (0) { \tiny 0};
        \draw (360/7:1) node[draw] (1) { \tiny 1};
        \draw (720/7:1) node[draw] (2) { \tiny 2};
        \draw (1080/7:1) node[draw] (3) { \tiny 3};
        \draw (1440/7:1) node[draw] (4) { \tiny 4};
        \draw (1800/7:1) node[draw] (5) { \tiny 5};
        \draw (2160/7:1) node[draw] (6) { \tiny 6};
        \draw  (0) edge (1);
        \draw  (0) edge (4);
        \draw  (1) edge (2);
        \draw  (2) edge (3);
        \draw  (4) edge (5);
        \draw  (5) edge (6);
    \draw (0,-2) node () {$7_{0}$};
    \end{tikzpicture}
\\
    \begin{tikzpicture}[scale=0.5,thick]
    \tikzstyle{every node}=[minimum width=0pt, inner sep=1pt, circle]
        \draw (0:1) node[draw] (0) { \tiny 0};
        \draw (360/7:1) node[draw] (1) { \tiny 1};
        \draw (720/7:1) node[draw] (2) { \tiny 2};
        \draw (1080/7:1) node[draw] (3) { \tiny 3};
        \draw (1440/7:1) node[draw] (4) { \tiny 4};
        \draw (1800/7:1) node[draw] (5) { \tiny 5};
        \draw (2160/7:1) node[draw] (6) { \tiny 6};
        \draw  (0) edge (1);
        \draw  (0) edge (4);
        \draw  (1) edge (2);
        \draw  (2) edge (3);
        \draw  (4) edge (5);
        \draw  (4) edge (6);
    \draw (0,-2) node () {$7_{1}$};
    \end{tikzpicture}
&
    \begin{tikzpicture}[scale=0.5,thick]
    \tikzstyle{every node}=[minimum width=0pt, inner sep=1pt, circle]
        \draw (0:1) node[draw] (0) { \tiny 0};
        \draw (360/7:1) node[draw] (1) { \tiny 1};
        \draw (720/7:1) node[draw] (2) { \tiny 2};
        \draw (1080/7:1) node[draw] (3) { \tiny 3};
        \draw (1440/7:1) node[draw] (4) { \tiny 4};
        \draw (1800/7:1) node[draw] (5) { \tiny 5};
        \draw (2160/7:1) node[draw] (6) { \tiny 6};
        \draw  (0) edge (1);
        \draw  (0) edge (4);
        \draw  (0) edge (6);
        \draw  (1) edge (2);
        \draw  (2) edge (3);
        \draw  (4) edge (5);
    \draw (0,-2) node () {$7_{2}$};
    \end{tikzpicture}
&
    \begin{tikzpicture}[scale=0.5,thick]
    \tikzstyle{every node}=[minimum width=0pt, inner sep=1pt, circle]
        \draw (0:1) node[draw] (0) { \tiny 0};
        \draw (360/7:1) node[draw] (1) { \tiny 1};
        \draw (720/7:1) node[draw] (2) { \tiny 2};
        \draw (1080/7:1) node[draw] (3) { \tiny 3};
        \draw (1440/7:1) node[draw] (4) { \tiny 4};
        \draw (1800/7:1) node[draw] (5) { \tiny 5};
        \draw (2160/7:1) node[draw] (6) { \tiny 6};
        \draw  (0) edge (1);
        \draw  (0) edge (5);
        \draw  (1) edge (2);
        \draw  (1) edge (3);
        \draw  (1) edge (4);
        \draw  (5) edge (6);
    \draw (0,-2) node () {$7_{3}$};
    \end{tikzpicture}
&
    \begin{tikzpicture}[scale=0.5,thick]
    \tikzstyle{every node}=[minimum width=0pt, inner sep=1pt, circle]
        \draw (0:1) node[draw] (0) { \tiny 0};
        \draw (360/7:1) node[draw] (1) { \tiny 1};
        \draw (720/7:1) node[draw] (2) { \tiny 2};
        \draw (1080/7:1) node[draw] (3) { \tiny 3};
        \draw (1440/7:1) node[draw] (4) { \tiny 4};
        \draw (1800/7:1) node[draw] (5) { \tiny 5};
        \draw (2160/7:1) node[draw] (6) { \tiny 6};
        \draw  (0) edge (1);
        \draw  (0) edge (4);
        \draw  (1) edge (2);
        \draw  (1) edge (3);
        \draw  (4) edge (5);
        \draw  (4) edge (6);
    \draw (0,-2) node () {$7_{4}$};
    \end{tikzpicture}
&
    \begin{tikzpicture}[scale=0.5,thick]
    \tikzstyle{every node}=[minimum width=0pt, inner sep=1pt, circle]
        \draw (0:1) node[draw] (0) { \tiny 0};
        \draw (360/7:1) node[draw] (1) { \tiny 1};
        \draw (720/7:1) node[draw] (2) { \tiny 2};
        \draw (1080/7:1) node[draw] (3) { \tiny 3};
        \draw (1440/7:1) node[draw] (4) { \tiny 4};
        \draw (1800/7:1) node[draw] (5) { \tiny 5};
        \draw (2160/7:1) node[draw] (6) { \tiny 6};
        \draw  (0) edge (1);
        \draw  (0) edge (4);
        \draw  (0) edge (6);
        \draw  (1) edge (2);
        \draw  (1) edge (3);
        \draw  (4) edge (5);
    \draw (0,-2) node () {$7_{5}$};
    \end{tikzpicture}
&
    \begin{tikzpicture}[scale=0.5,thick]
    \tikzstyle{every node}=[minimum width=0pt, inner sep=1pt, circle]
        \draw (0:1) node[draw] (0) { \tiny 0};
        \draw (360/7:1) node[draw] (1) { \tiny 1};
        \draw (720/7:1) node[draw] (2) { \tiny 2};
        \draw (1080/7:1) node[draw] (3) { \tiny 3};
        \draw (1440/7:1) node[draw] (4) { \tiny 4};
        \draw (1800/7:1) node[draw] (5) { \tiny 5};
        \draw (2160/7:1) node[draw] (6) { \tiny 6};
        \draw  (0) edge (1);
        \draw  (0) edge (4);
        \draw  (0) edge (5);
        \draw  (0) edge (6);
        \draw  (1) edge (2);
        \draw  (1) edge (3);
    \draw (0,-2) node () {$7_{6}$};
    \end{tikzpicture}
&
    \begin{tikzpicture}[scale=0.5,thick]
    \tikzstyle{every node}=[minimum width=0pt, inner sep=1pt, circle]
        \draw (0:1) node[draw] (0) { \tiny 0};
        \draw (360/7:1) node[draw] (1) { \tiny 1};
        \draw (720/7:1) node[draw] (2) { \tiny 2};
        \draw (1080/7:1) node[draw] (3) { \tiny 3};
        \draw (1440/7:1) node[draw] (4) { \tiny 4};
        \draw (1800/7:1) node[draw] (5) { \tiny 5};
        \draw (2160/7:1) node[draw] (6) { \tiny 6};
        \draw  (0) edge (1);
        \draw  (0) edge (3);
        \draw  (0) edge (5);
        \draw  (1) edge (2);
        \draw  (3) edge (4);
        \draw  (5) edge (6);
    \draw (0,-2) node () {$7_{7}$};
    \end{tikzpicture}
\\
    \begin{tikzpicture}[scale=0.5,thick]
    \tikzstyle{every node}=[minimum width=0pt, inner sep=1pt, circle]
        \draw (0:1) node[draw] (0) { \tiny 0};
        \draw (360/7:1) node[draw] (1) { \tiny 1};
        \draw (720/7:1) node[draw] (2) { \tiny 2};
        \draw (1080/7:1) node[draw] (3) { \tiny 3};
        \draw (1440/7:1) node[draw] (4) { \tiny 4};
        \draw (1800/7:1) node[draw] (5) { \tiny 5};
        \draw (2160/7:1) node[draw] (6) { \tiny 6};
        \draw  (0) edge (1);
        \draw  (0) edge (3);
        \draw  (0) edge (5);
        \draw  (0) edge (6);
        \draw  (1) edge (2);
        \draw  (3) edge (4);
    \draw (0,-2) node () {$7_{8}$};
    \end{tikzpicture}
&
    \begin{tikzpicture}[scale=0.5,thick]
    \tikzstyle{every node}=[minimum width=0pt, inner sep=1pt, circle]
        \draw (0:1) node[draw] (0) { \tiny 0};
        \draw (360/7:1) node[draw] (1) { \tiny 1};
        \draw (720/7:1) node[draw] (2) { \tiny 2};
        \draw (1080/7:1) node[draw] (3) { \tiny 3};
        \draw (1440/7:1) node[draw] (4) { \tiny 4};
        \draw (1800/7:1) node[draw] (5) { \tiny 5};
        \draw (2160/7:1) node[draw] (6) { \tiny 6};
        \draw  (0) edge (1);
        \draw  (0) edge (3);
        \draw  (0) edge (4);
        \draw  (0) edge (5);
        \draw  (0) edge (6);
        \draw  (1) edge (2);
    \draw (0,-2) node () {$7_{9}$};
    \end{tikzpicture}
&
    \begin{tikzpicture}[scale=0.5,thick]
    \tikzstyle{every node}=[minimum width=0pt, inner sep=1pt, circle]
        \draw (0:1) node[draw] (0) { \tiny 0};
        \draw (360/7:1) node[draw] (1) { \tiny 1};
        \draw (720/7:1) node[draw] (2) { \tiny 2};
        \draw (1080/7:1) node[draw] (3) { \tiny 3};
        \draw (1440/7:1) node[draw] (4) { \tiny 4};
        \draw (1800/7:1) node[draw] (5) { \tiny 5};
        \draw (2160/7:1) node[draw] (6) { \tiny 6};
        \draw  (0) edge (1);
        \draw  (0) edge (2);
        \draw  (0) edge (3);
        \draw  (0) edge (4);
        \draw  (0) edge (5);
        \draw  (0) edge (6);
    \draw (0,-2) node () {$7_{10}$};
    \end{tikzpicture}
&
    \begin{tikzpicture}[scale=0.5,thick]
    \tikzstyle{every node}=[minimum width=0pt, inner sep=1pt, circle]
        \draw (0:1) node[draw] (0) { \tiny 0};
        \draw (45:1) node[draw] (1) { \tiny 1};
        \draw (90:1) node[draw] (2) { \tiny 2};
        \draw (135:1) node[draw] (3) { \tiny 3};
        \draw (180:1) node[draw] (4) { \tiny 4};
        \draw (225:1) node[draw] (5) { \tiny 5};
        \draw (270:1) node[draw] (6) { \tiny 6};
        \draw (315:1) node[draw] (7) { \tiny 7};
        \draw  (0) edge (1);
        \draw  (0) edge (5);
        \draw  (1) edge (2);
        \draw  (2) edge (3);
        \draw  (3) edge (4);
        \draw  (5) edge (6);
        \draw  (6) edge (7);
    \draw (0,-2) node () {$8_{0}$};
    \end{tikzpicture}
&
    \begin{tikzpicture}[scale=0.5,thick]
    \tikzstyle{every node}=[minimum width=0pt, inner sep=1pt, circle]
        \draw (0:1) node[draw] (0) { \tiny 0};
        \draw (45:1) node[draw] (1) { \tiny 1};
        \draw (90:1) node[draw] (2) { \tiny 2};
        \draw (135:1) node[draw] (3) { \tiny 3};
        \draw (180:1) node[draw] (4) { \tiny 4};
        \draw (225:1) node[draw] (5) { \tiny 5};
        \draw (270:1) node[draw] (6) { \tiny 6};
        \draw (315:1) node[draw] (7) { \tiny 7};
        \draw  (0) edge (1);
        \draw  (0) edge (5);
        \draw  (1) edge (2);
        \draw  (2) edge (3);
        \draw  (2) edge (4);
        \draw  (5) edge (6);
        \draw  (6) edge (7);
    \draw (0,-2) node () {$8_{1}$};
    \end{tikzpicture}
&
    \begin{tikzpicture}[scale=0.5,thick]
    \tikzstyle{every node}=[minimum width=0pt, inner sep=1pt, circle]
        \draw (0:1) node[draw] (0) { \tiny 0};
        \draw (45:1) node[draw] (1) { \tiny 1};
        \draw (90:1) node[draw] (2) { \tiny 2};
        \draw (135:1) node[draw] (3) { \tiny 3};
        \draw (180:1) node[draw] (4) { \tiny 4};
        \draw (225:1) node[draw] (5) { \tiny 5};
        \draw (270:1) node[draw] (6) { \tiny 6};
        \draw (315:1) node[draw] (7) { \tiny 7};
        \draw  (0) edge (1);
        \draw  (0) edge (5);
        \draw  (1) edge (2);
        \draw  (2) edge (3);
        \draw  (2) edge (4);
        \draw  (5) edge (6);
        \draw  (5) edge (7);
    \draw (0,-2) node () {$8_{2}$};
    \end{tikzpicture}
&
    \begin{tikzpicture}[scale=0.5,thick]
    \tikzstyle{every node}=[minimum width=0pt, inner sep=1pt, circle]
        \draw (0:1) node[draw] (0) { \tiny 0};
        \draw (45:1) node[draw] (1) { \tiny 1};
        \draw (90:1) node[draw] (2) { \tiny 2};
        \draw (135:1) node[draw] (3) { \tiny 3};
        \draw (180:1) node[draw] (4) { \tiny 4};
        \draw (225:1) node[draw] (5) { \tiny 5};
        \draw (270:1) node[draw] (6) { \tiny 6};
        \draw (315:1) node[draw] (7) { \tiny 7};
        \draw  (0) edge (1);
        \draw  (0) edge (5);
        \draw  (1) edge (2);
        \draw  (1) edge (4);
        \draw  (2) edge (3);
        \draw  (5) edge (6);
        \draw  (6) edge (7);
    \draw (0,-2) node () {$8_{3}$};
    \end{tikzpicture}
\\
    \begin{tikzpicture}[scale=0.5,thick]
    \tikzstyle{every node}=[minimum width=0pt, inner sep=1pt, circle]
        \draw (0:1) node[draw] (0) { \tiny 0};
        \draw (45:1) node[draw] (1) { \tiny 1};
        \draw (90:1) node[draw] (2) { \tiny 2};
        \draw (135:1) node[draw] (3) { \tiny 3};
        \draw (180:1) node[draw] (4) { \tiny 4};
        \draw (225:1) node[draw] (5) { \tiny 5};
        \draw (270:1) node[draw] (6) { \tiny 6};
        \draw (315:1) node[draw] (7) { \tiny 7};
        \draw  (0) edge (1);
        \draw  (0) edge (5);
        \draw  (1) edge (2);
        \draw  (1) edge (4);
        \draw  (2) edge (3);
        \draw  (5) edge (6);
        \draw  (5) edge (7);
    \draw (0,-2) node () {$8_{4}$};
    \end{tikzpicture}
&
    \begin{tikzpicture}[scale=0.5,thick]
    \tikzstyle{every node}=[minimum width=0pt, inner sep=1pt, circle]
        \draw (0:1) node[draw] (0) { \tiny 0};
        \draw (45:1) node[draw] (1) { \tiny 1};
        \draw (90:1) node[draw] (2) { \tiny 2};
        \draw (135:1) node[draw] (3) { \tiny 3};
        \draw (180:1) node[draw] (4) { \tiny 4};
        \draw (225:1) node[draw] (5) { \tiny 5};
        \draw (270:1) node[draw] (6) { \tiny 6};
        \draw (315:1) node[draw] (7) { \tiny 7};
        \draw  (0) edge (1);
        \draw  (0) edge (5);
        \draw  (0) edge (7);
        \draw  (1) edge (2);
        \draw  (1) edge (4);
        \draw  (2) edge (3);
        \draw  (5) edge (6);
    \draw (0,-2) node () {$8_{5}$};
    \end{tikzpicture}
&
    \begin{tikzpicture}[scale=0.5,thick]
    \tikzstyle{every node}=[minimum width=0pt, inner sep=1pt, circle]
        \draw (0:1) node[draw] (0) { \tiny 0};
        \draw (45:1) node[draw] (1) { \tiny 1};
        \draw (90:1) node[draw] (2) { \tiny 2};
        \draw (135:1) node[draw] (3) { \tiny 3};
        \draw (180:1) node[draw] (4) { \tiny 4};
        \draw (225:1) node[draw] (5) { \tiny 5};
        \draw (270:1) node[draw] (6) { \tiny 6};
        \draw (315:1) node[draw] (7) { \tiny 7};
        \draw  (0) edge (1);
        \draw  (0) edge (4);
        \draw  (0) edge (7);
        \draw  (1) edge (2);
        \draw  (2) edge (3);
        \draw  (4) edge (5);
        \draw  (5) edge (6);
    \draw (0,-2) node () {$8_{6}$};
    \end{tikzpicture}
&
    \begin{tikzpicture}[scale=0.5,thick]
    \tikzstyle{every node}=[minimum width=0pt, inner sep=1pt, circle]
        \draw (0:1) node[draw] (0) { \tiny 0};
        \draw (45:1) node[draw] (1) { \tiny 1};
        \draw (90:1) node[draw] (2) { \tiny 2};
        \draw (135:1) node[draw] (3) { \tiny 3};
        \draw (180:1) node[draw] (4) { \tiny 4};
        \draw (225:1) node[draw] (5) { \tiny 5};
        \draw (270:1) node[draw] (6) { \tiny 6};
        \draw (315:1) node[draw] (7) { \tiny 7};
        \draw  (0) edge (1);
        \draw  (0) edge (4);
        \draw  (1) edge (2);
        \draw  (2) edge (3);
        \draw  (4) edge (5);
        \draw  (4) edge (6);
        \draw  (4) edge (7);
    \draw (0,-2) node () {$8_{7}$};
    \end{tikzpicture}
&
    \begin{tikzpicture}[scale=0.5,thick]
    \tikzstyle{every node}=[minimum width=0pt, inner sep=1pt, circle]
        \draw (0:1) node[draw] (0) { \tiny 0};
        \draw (45:1) node[draw] (1) { \tiny 1};
        \draw (90:1) node[draw] (2) { \tiny 2};
        \draw (135:1) node[draw] (3) { \tiny 3};
        \draw (180:1) node[draw] (4) { \tiny 4};
        \draw (225:1) node[draw] (5) { \tiny 5};
        \draw (270:1) node[draw] (6) { \tiny 6};
        \draw (315:1) node[draw] (7) { \tiny 7};
        \draw  (0) edge (1);
        \draw  (0) edge (4);
        \draw  (0) edge (7);
        \draw  (1) edge (2);
        \draw  (2) edge (3);
        \draw  (4) edge (5);
        \draw  (4) edge (6);
    \draw (0,-2) node () {$8_{8}$};
    \end{tikzpicture}
&
    \begin{tikzpicture}[scale=0.5,thick]
    \tikzstyle{every node}=[minimum width=0pt, inner sep=1pt, circle]
        \draw (0:1) node[draw] (0) { \tiny 0};
        \draw (45:1) node[draw] (1) { \tiny 1};
        \draw (90:1) node[draw] (2) { \tiny 2};
        \draw (135:1) node[draw] (3) { \tiny 3};
        \draw (180:1) node[draw] (4) { \tiny 4};
        \draw (225:1) node[draw] (5) { \tiny 5};
        \draw (270:1) node[draw] (6) { \tiny 6};
        \draw (315:1) node[draw] (7) { \tiny 7};
        \draw  (0) edge (1);
        \draw  (0) edge (4);
        \draw  (0) edge (6);
        \draw  (1) edge (2);
        \draw  (2) edge (3);
        \draw  (4) edge (5);
        \draw  (6) edge (7);
    \draw (0,-2) node () {$8_{9}$};
    \end{tikzpicture}
&
    \begin{tikzpicture}[scale=0.5,thick]
    \tikzstyle{every node}=[minimum width=0pt, inner sep=1pt, circle]
        \draw (0:1) node[draw] (0) { \tiny 0};
        \draw (45:1) node[draw] (1) { \tiny 1};
        \draw (90:1) node[draw] (2) { \tiny 2};
        \draw (135:1) node[draw] (3) { \tiny 3};
        \draw (180:1) node[draw] (4) { \tiny 4};
        \draw (225:1) node[draw] (5) { \tiny 5};
        \draw (270:1) node[draw] (6) { \tiny 6};
        \draw (315:1) node[draw] (7) { \tiny 7};
        \draw  (0) edge (1);
        \draw  (0) edge (4);
        \draw  (0) edge (6);
        \draw  (0) edge (7);
        \draw  (1) edge (2);
        \draw  (2) edge (3);
        \draw  (4) edge (5);
    \draw (0,-2) node () {$8_{10}$};
    \end{tikzpicture}
\\
    \begin{tikzpicture}[scale=0.5,thick]
    \tikzstyle{every node}=[minimum width=0pt, inner sep=1pt, circle]
        \draw (0:1) node[draw] (0) { \tiny 0};
        \draw (45:1) node[draw] (1) { \tiny 1};
        \draw (90:1) node[draw] (2) { \tiny 2};
        \draw (135:1) node[draw] (3) { \tiny 3};
        \draw (180:1) node[draw] (4) { \tiny 4};
        \draw (225:1) node[draw] (5) { \tiny 5};
        \draw (270:1) node[draw] (6) { \tiny 6};
        \draw (315:1) node[draw] (7) { \tiny 7};
        \draw  (0) edge (1);
        \draw  (0) edge (6);
        \draw  (1) edge (2);
        \draw  (1) edge (3);
        \draw  (1) edge (4);
        \draw  (1) edge (5);
        \draw  (6) edge (7);
    \draw (0,-2) node () {$8_{11}$};
    \end{tikzpicture}
&
    \begin{tikzpicture}[scale=0.5,thick]
    \tikzstyle{every node}=[minimum width=0pt, inner sep=1pt, circle]
        \draw (0:1) node[draw] (0) { \tiny 0};
        \draw (45:1) node[draw] (1) { \tiny 1};
        \draw (90:1) node[draw] (2) { \tiny 2};
        \draw (135:1) node[draw] (3) { \tiny 3};
        \draw (180:1) node[draw] (4) { \tiny 4};
        \draw (225:1) node[draw] (5) { \tiny 5};
        \draw (270:1) node[draw] (6) { \tiny 6};
        \draw (315:1) node[draw] (7) { \tiny 7};
        \draw  (0) edge (1);
        \draw  (0) edge (5);
        \draw  (1) edge (2);
        \draw  (1) edge (3);
        \draw  (1) edge (4);
        \draw  (5) edge (6);
        \draw  (5) edge (7);
    \draw (0,-2) node () {$8_{12}$};
    \end{tikzpicture}
&
    \begin{tikzpicture}[scale=0.5,thick]
    \tikzstyle{every node}=[minimum width=0pt, inner sep=1pt, circle]
        \draw (0:1) node[draw] (0) { \tiny 0};
        \draw (45:1) node[draw] (1) { \tiny 1};
        \draw (90:1) node[draw] (2) { \tiny 2};
        \draw (135:1) node[draw] (3) { \tiny 3};
        \draw (180:1) node[draw] (4) { \tiny 4};
        \draw (225:1) node[draw] (5) { \tiny 5};
        \draw (270:1) node[draw] (6) { \tiny 6};
        \draw (315:1) node[draw] (7) { \tiny 7};
        \draw  (0) edge (1);
        \draw  (0) edge (5);
        \draw  (0) edge (7);
        \draw  (1) edge (2);
        \draw  (1) edge (3);
        \draw  (1) edge (4);
        \draw  (5) edge (6);
    \draw (0,-2) node () {$8_{13}$};
    \end{tikzpicture}
&
    \begin{tikzpicture}[scale=0.5,thick]
    \tikzstyle{every node}=[minimum width=0pt, inner sep=1pt, circle]
        \draw (0:1) node[draw] (0) { \tiny 0};
        \draw (45:1) node[draw] (1) { \tiny 1};
        \draw (90:1) node[draw] (2) { \tiny 2};
        \draw (135:1) node[draw] (3) { \tiny 3};
        \draw (180:1) node[draw] (4) { \tiny 4};
        \draw (225:1) node[draw] (5) { \tiny 5};
        \draw (270:1) node[draw] (6) { \tiny 6};
        \draw (315:1) node[draw] (7) { \tiny 7};
        \draw  (0) edge (1);
        \draw  (0) edge (5);
        \draw  (0) edge (6);
        \draw  (0) edge (7);
        \draw  (1) edge (2);
        \draw  (1) edge (3);
        \draw  (1) edge (4);
    \draw (0,-2) node () {$8_{14}$};
    \end{tikzpicture}
&
    \begin{tikzpicture}[scale=0.5,thick]
    \tikzstyle{every node}=[minimum width=0pt, inner sep=1pt, circle]
        \draw (0:1) node[draw] (0) { \tiny 0};
        \draw (45:1) node[draw] (1) { \tiny 1};
        \draw (90:1) node[draw] (2) { \tiny 2};
        \draw (135:1) node[draw] (3) { \tiny 3};
        \draw (180:1) node[draw] (4) { \tiny 4};
        \draw (225:1) node[draw] (5) { \tiny 5};
        \draw (270:1) node[draw] (6) { \tiny 6};
        \draw (315:1) node[draw] (7) { \tiny 7};
        \draw  (0) edge (1);
        \draw  (0) edge (4);
        \draw  (0) edge (7);
        \draw  (1) edge (2);
        \draw  (1) edge (3);
        \draw  (4) edge (5);
        \draw  (4) edge (6);
    \draw (0,-2) node () {$8_{15}$};
    \end{tikzpicture}
&
    \begin{tikzpicture}[scale=0.5,thick]
    \tikzstyle{every node}=[minimum width=0pt, inner sep=1pt, circle]
        \draw (0:1) node[draw] (0) { \tiny 0};
        \draw (45:1) node[draw] (1) { \tiny 1};
        \draw (90:1) node[draw] (2) { \tiny 2};
        \draw (135:1) node[draw] (3) { \tiny 3};
        \draw (180:1) node[draw] (4) { \tiny 4};
        \draw (225:1) node[draw] (5) { \tiny 5};
        \draw (270:1) node[draw] (6) { \tiny 6};
        \draw (315:1) node[draw] (7) { \tiny 7};
        \draw  (0) edge (1);
        \draw  (0) edge (4);
        \draw  (0) edge (6);
        \draw  (1) edge (2);
        \draw  (1) edge (3);
        \draw  (4) edge (5);
        \draw  (6) edge (7);
    \draw (0,-2) node () {$8_{16}$};
    \end{tikzpicture}
&
    \begin{tikzpicture}[scale=0.5,thick]
    \tikzstyle{every node}=[minimum width=0pt, inner sep=1pt, circle]
        \draw (0:1) node[draw] (0) { \tiny 0};
        \draw (45:1) node[draw] (1) { \tiny 1};
        \draw (90:1) node[draw] (2) { \tiny 2};
        \draw (135:1) node[draw] (3) { \tiny 3};
        \draw (180:1) node[draw] (4) { \tiny 4};
        \draw (225:1) node[draw] (5) { \tiny 5};
        \draw (270:1) node[draw] (6) { \tiny 6};
        \draw (315:1) node[draw] (7) { \tiny 7};
        \draw  (0) edge (1);
        \draw  (0) edge (4);
        \draw  (0) edge (6);
        \draw  (0) edge (7);
        \draw  (1) edge (2);
        \draw  (1) edge (3);
        \draw  (4) edge (5);
    \draw (0,-2) node () {$8_{17}$};
    \end{tikzpicture}
\\
&
    \begin{tikzpicture}[scale=0.5,thick]
    \tikzstyle{every node}=[minimum width=0pt, inner sep=1pt, circle]
        \draw (0:1) node[draw] (0) { \tiny 0};
        \draw (45:1) node[draw] (1) { \tiny 1};
        \draw (90:1) node[draw] (2) { \tiny 2};
        \draw (135:1) node[draw] (3) { \tiny 3};
        \draw (180:1) node[draw] (4) { \tiny 4};
        \draw (225:1) node[draw] (5) { \tiny 5};
        \draw (270:1) node[draw] (6) { \tiny 6};
        \draw (315:1) node[draw] (7) { \tiny 7};
        \draw  (0) edge (1);
        \draw  (0) edge (4);
        \draw  (0) edge (5);
        \draw  (0) edge (6);
        \draw  (0) edge (7);
        \draw  (1) edge (2);
        \draw  (1) edge (3);
    \draw (0,-2) node () {$8_{18}$};
    \end{tikzpicture}
&
    \begin{tikzpicture}[scale=0.5,thick]
    \tikzstyle{every node}=[minimum width=0pt, inner sep=1pt, circle]
        \draw (0:1) node[draw] (0) { \tiny 0};
        \draw (45:1) node[draw] (1) { \tiny 1};
        \draw (90:1) node[draw] (2) { \tiny 2};
        \draw (135:1) node[draw] (3) { \tiny 3};
        \draw (180:1) node[draw] (4) { \tiny 4};
        \draw (225:1) node[draw] (5) { \tiny 5};
        \draw (270:1) node[draw] (6) { \tiny 6};
        \draw (315:1) node[draw] (7) { \tiny 7};
        \draw  (0) edge (1);
        \draw  (0) edge (3);
        \draw  (0) edge (5);
        \draw  (0) edge (7);
        \draw  (1) edge (2);
        \draw  (3) edge (4);
        \draw  (5) edge (6);
    \draw (0,-2) node () {$8_{19}$};
    \end{tikzpicture}
&
    \begin{tikzpicture}[scale=0.5,thick]
    \tikzstyle{every node}=[minimum width=0pt, inner sep=1pt, circle]
        \draw (0:1) node[draw] (0) { \tiny 0};
        \draw (45:1) node[draw] (1) { \tiny 1};
        \draw (90:1) node[draw] (2) { \tiny 2};
        \draw (135:1) node[draw] (3) { \tiny 3};
        \draw (180:1) node[draw] (4) { \tiny 4};
        \draw (225:1) node[draw] (5) { \tiny 5};
        \draw (270:1) node[draw] (6) { \tiny 6};
        \draw (315:1) node[draw] (7) { \tiny 7};
        \draw  (0) edge (1);
        \draw  (0) edge (3);
        \draw  (0) edge (5);
        \draw  (0) edge (6);
        \draw  (0) edge (7);
        \draw  (1) edge (2);
        \draw  (3) edge (4);
    \draw (0,-2) node () {$8_{20}$};
    \end{tikzpicture}
&
    \begin{tikzpicture}[scale=0.5,thick]
    \tikzstyle{every node}=[minimum width=0pt, inner sep=1pt, circle]
        \draw (0:1) node[draw] (0) { \tiny 0};
        \draw (45:1) node[draw] (1) { \tiny 1};
        \draw (90:1) node[draw] (2) { \tiny 2};
        \draw (135:1) node[draw] (3) { \tiny 3};
        \draw (180:1) node[draw] (4) { \tiny 4};
        \draw (225:1) node[draw] (5) { \tiny 5};
        \draw (270:1) node[draw] (6) { \tiny 6};
        \draw (315:1) node[draw] (7) { \tiny 7};
        \draw  (0) edge (1);
        \draw  (0) edge (3);
        \draw  (0) edge (4);
        \draw  (0) edge (5);
        \draw  (0) edge (6);
        \draw  (0) edge (7);
        \draw  (1) edge (2);
    \draw (0,-2) node () {$8_{21}$};
    \end{tikzpicture}
&
    \begin{tikzpicture}[scale=0.5,thick]
    \tikzstyle{every node}=[minimum width=0pt, inner sep=1pt, circle]
        \draw (0:1) node[draw] (0) { \tiny 0};
        \draw (45:1) node[draw] (1) { \tiny 1};
        \draw (90:1) node[draw] (2) { \tiny 2};
        \draw (135:1) node[draw] (3) { \tiny 3};
        \draw (180:1) node[draw] (4) { \tiny 4};
        \draw (225:1) node[draw] (5) { \tiny 5};
        \draw (270:1) node[draw] (6) { \tiny 6};
        \draw (315:1) node[draw] (7) { \tiny 7};
        \draw  (0) edge (1);
        \draw  (0) edge (2);
        \draw  (0) edge (3);
        \draw  (0) edge (4);
        \draw  (0) edge (5);
        \draw  (0) edge (6);
        \draw  (0) edge (7);
    \draw (0,-2) node () {$8_{22}$};
    \end{tikzpicture}

    \end{tabular}
    \caption{Trees with at most 8 vertices. The indexing is used in Tables \ref{tab:identities1} and \ref{tab:identities2} to associate vertices with entries of the configurations.}
    \label{fig:Trees}
\end{figure}
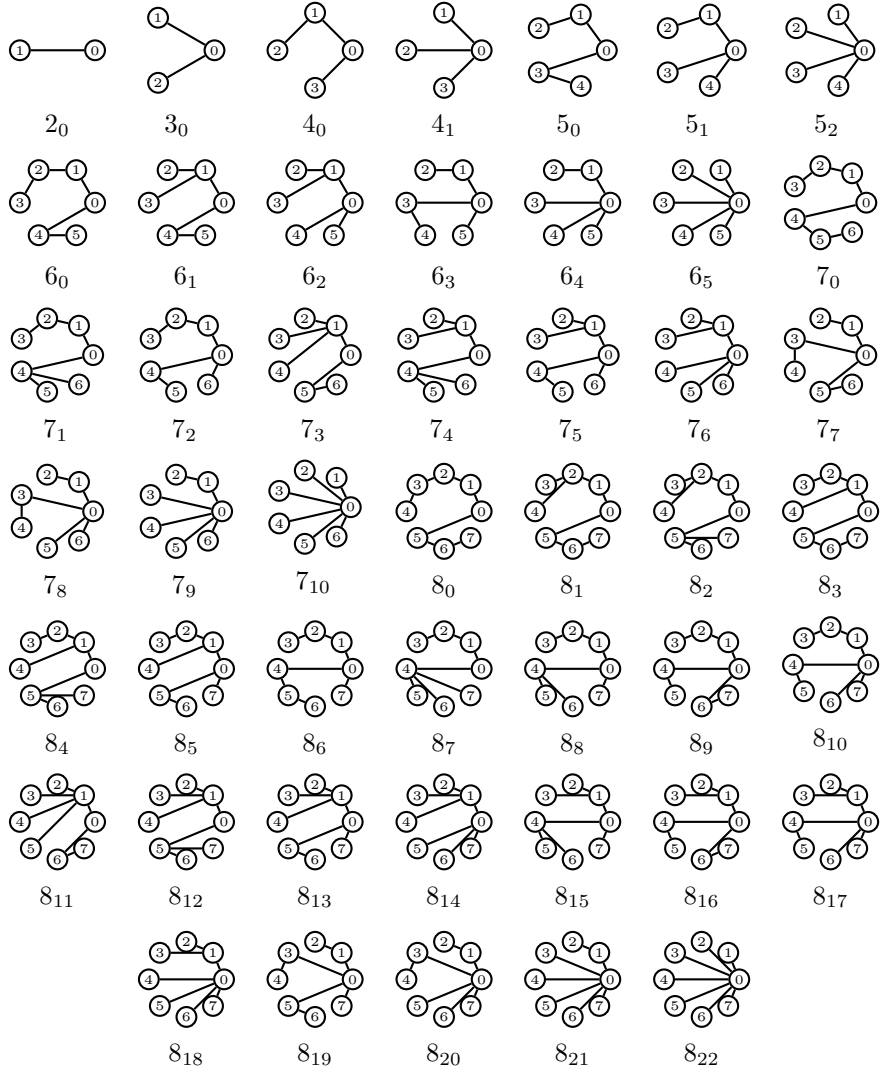

Among the recurrent configurations, the identity element is one of the most studied since it shows interesting patterns, see \cite[Section 5.7]{Klivans}.
In this section, we focus on the recurrent configurations associated with the identity element of the sandpile group of the dual graph of an outerplane graph where the vertex associated with the outer face is taken as sink.

Next result gives a method to compute the identity element.

\begin{proposition}\cite[Proposition 5.7.1]{Klivans}
Let $G$ be a connected graph with sink vertex $q$.
Let $\sigma_{max}\in\mathbb{N}^{V(G)}$ be the configuration in which the entry associated with vertex $v$ equals $\deg_G(v)-1$.
The recurrent configuration obtained from the stabilization
\[
s(2\sigma_{max}-s(2\sigma_{max}))
\]
is the identity element.
\end{proposition}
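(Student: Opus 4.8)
The plan is to prove the statement inside the standard algebraic model of the sandpile group. Write $\tilde V=V(G)\setminus\{q\}$ and let $\tilde L:=L_q(G)$ be the reduced Laplacian, so that $K(G)\cong\coker(\tilde L)=\mathbb{Z}^{\tilde V}/\operatorname{Im}(\tilde L)$ and every coset of $\operatorname{Im}(\tilde L)$ contains exactly one recurrent configuration, these being the distinguished representatives that realize the group law $c\oplus d=s(c+d)$. I would take as known three facts from the theory of the abelian sandpile model: (i) toppling a non-sink vertex subtracts the corresponding row of $\tilde L$, so that $c\equiv s(c)\pmod{\operatorname{Im}(\tilde L)}$; (ii) the maximal stable configuration $\sigma_{max}$ is recurrent; and (iii) adding a nonnegative configuration to a recurrent configuration and then stabilizing again yields a recurrent configuration. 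Combining (ii) and (iii), any $s(a)$ with $a\ge\sigma_{max}$ coordinatewise on $\tilde V$ is recurrent, since $a=\sigma_{max}+(a-\sigma_{max})$ with $a-\sigma_{max}\ge 0$.

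First I would set $r:=s(2\sigma_{max})$. Because $2\sigma_{max}\ge\sigma_{max}$, the configuration $r$ is recurrent; being stable, it satisfies $r_v\le\deg_G(v)-1=(\sigma_{max})_v$ for every $v\in\tilde V$. The crucial consequence is that $\rho:=2\sigma_{max}-r$ is itself a genuine configuration that dominates $\sigma_{max}$: indeed $\rho_v=2(\deg_G(v)-1)-r_v\ge(\deg_G(v)-1)=(\sigma_{max})_v\ge 0$. Hence $e:=s(\rho)=s\!\left(2\sigma_{max}-s(2\sigma_{max})\right)$ is well defined and, by the domination criterion above, recurrent.

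Next I would locate $e$ in its coset. By fact (i), stabilization preserves the class modulo $\operatorname{Im}(\tilde L)$, so $2\sigma_{max}\equiv r$ and therefore $\rho=2\sigma_{max}-r\equiv 0$; applying (i) once more gives $e\equiv\rho\equiv 0$. Thus $e$ is the unique recurrent configuration lying in the trivial coset. It then remains to check that the zero-coset representative is the group identity: for an arbitrary recurrent $c$ we have $c\oplus e=s(c+e)$, which is recurrent by (iii) and lies in the coset $[c]+[0]=[c]$; by uniqueness of the recurrent representative it must equal $c$. Hence $c\oplus e=c$ for all recurrent $c$, so $e$ is the identity of $K(G)$.

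The hard part will be the nonelementary inputs (ii) and (iii), namely the recurrence of $\sigma_{max}$ and the stability of recurrence under adding sand and restabilizing; granting these, the decisive and delicate point is purely order-theoretic, the inequality $r\le\sigma_{max}$, which is exactly what guarantees that the difference $2\sigma_{max}-s(2\sigma_{max})$ is a legitimate nonnegative configuration (in fact one dominating $\sigma_{max}$) rather than a formal vector with negative entries. Everything else is bookkeeping with the coset relation $c\equiv s(c)$.
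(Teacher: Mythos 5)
The paper does not prove this proposition; it is imported verbatim from the cited reference \cite[Proposition 5.7.1]{Klivans}, so there is no in-paper argument to compare against. Your proof is correct and is essentially the standard argument (the one given in that reference): the two genuinely substantive points are exactly the ones you isolate, namely that $r:=s(2\sigma_{max})$ is stable and hence $2\sigma_{max}-r\ge\sigma_{max}$ is a legitimate configuration whose stabilization is recurrent, and that stabilization preserves the class modulo $\operatorname{Im}\bigl(L_q(G)\bigr)$, so the result lands in the zero coset, whose unique recurrent representative must be the identity. The background facts you invoke --- recurrence of $\sigma_{max}$, closure of recurrence under adding sand and restabilizing, and the bijection between recurrent configurations and cosets of the reduced Laplacian image --- are all standard and are the same inputs the textbook proof uses, so granting them is appropriate for a result the paper itself treats as known.
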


Given a tree $T$ with $n$ vertices and a vector $c\in\mathbb{N}^n$, whose entries are associated with the vertices of $T$, and $c$ is such that $c_v\geq \deg_T(v)$ for any non-leaf vertex $v\in T$ and $c_v\geq 2$ whenever $v$ is a leaf in $T$. 
Let $G_{T,c}$ be the planar graph obtained from $T$ by adding a sink vertex $q$ and adding $c_v-\deg_T(v)$ edges between the vertices $v$ and $q$, for each $v\in V(T)$.
Thus $c_v=\deg_{G_{T,c}}(v)$ for $v\in V(T)$.
The graphs $T$ and $G_{T,c}$ are the weak dual and dual of a family $F(T,c)$ of outerplane graphs.
Note the graphs in this family have the same sandpile group.

In figure~\ref{fig:Trees}, we give the trees with at most 8 vertices, the indexing on the vertices will be used to associate the entries of the configurations.

\begin{example}
Consider graph $6_2$ of Figure~\ref{fig:Trees}.
The graph $G(6_2,(4,5,3,3,3,3))$ is isomorphic to the dual graph of the plane graph $G$ of Figure~\ref{fig:exa2}.
Also, the graphs $G(6_2,(3,3,3,3,3,3))$ and $G(6_2,(6,5,3,3,3,3))$ are isomorphic to the dual graphs of the plane graphs $G_1$ and $G_2$ of Figure~\ref{fig:exa2.2}, respectively.
\end{example}

\begin{table}[ht]
    \centering
    \tiny
    \begin{tabular}{|c|cc|cc|}
    \hline
    $T$ & $c$ & identity & $c$ & identity \\
    \hline
$2_{0}$ & [2, 2] & [1, 1] & [3, 3] & [2, 2]\\
$3_{0}$ & [2, 2, 2] & [0, 1, 1] & [2, 3, 3] & [0, 2, 2]\\
$4_{0}$ & [2, 2, 2, 2] & [1, 1, 1, 1] & [2, 2, 3, 3] & [1, 1, 1, 1]\\
$4_{1}$ & [3, 2, 2, 2] & [0, 1, 1, 1] & [3, 3, 3, 3] & [0, 2, 2, 2]\\
$5_{0}$ & [2, 2, 2, 2, 2] & [0, 1, 1, 1, 1] & [2, 2, 3, 2, 3] & [0, 1, 1, 1, 1]\\
$5_{1}$ & [3, 2, 2, 2, 2] & [2, 1, 1, 1, 1] & [3, 2, 3, 3, 3] & [2, 1, 1, 1, 1]\\
$5_{2}$ & [4, 2, 2, 2, 2] & [0, 1, 1, 1, 1] & [4, 3, 3, 3, 3] & [0, 2, 2, 2, 2]\\
$6_{0}$ & [2, 2, 2, 2, 2, 2] & [1, 1, 1, 1, 1, 1] & [2, 2, 2, 3, 2, 3] & [1, 1, 1, 2, 1, 2]\\
$6_{1}$ & [2, 3, 2, 2, 2, 2] & [0, 2, 1, 1, 1, 1] & [2, 3, 3, 3, 2, 3] & [0, 2, 1, 1, 1, 1]\\
$6_{2}$ & [3, 3, 2, 2, 2, 2] & [2, 2, 1, 1, 1, 1] & [3, 3, 3, 3, 3, 3] & [2, 2, 1, 1, 1, 1]\\
$6_{3}$ & [3, 2, 2, 2, 2, 2] & [1, 1, 1, 1, 1, 1] & [3, 2, 3, 2, 3, 3] & [1, 1, 1, 1, 1, 1]\\
$6_{4}$ & [4, 2, 2, 2, 2, 2] & [3, 1, 1, 1, 1, 1] & [4, 2, 3, 3, 3, 3] & [3, 1, 1, 1, 1, 1]\\
$6_{5}$ & [5, 2, 2, 2, 2, 2] & [0, 1, 1, 1, 1, 1] & [5, 3, 3, 3, 3, 3] & [0, 2, 2, 2, 2, 2]\\
$7_{0}$ & [2, 2, 2, 2, 2, 2, 2] & [0, 1, 1, 1, 1, 1, 1] & [2, 2, 2, 3, 2, 2, 3] & [0, 1, 1, 2, 1, 1, 2]\\
$7_{1}$ & [2, 2, 2, 2, 3, 2, 2] & [1, 1, 0, 1, 1, 1, 1] & [2, 2, 2, 3, 3, 3, 3] & [1, 1, 0, 1, 1, 1, 1]\\
$7_{2}$ & [3, 2, 2, 2, 2, 2, 2] & [1, 0, 1, 1, 1, 1, 1] & [3, 2, 2, 3, 2, 3, 3] & [1, 0, 1, 1, 1, 1, 1]\\
$7_{3}$ & [2, 4, 2, 2, 2, 2, 2] & [0, 3, 1, 1, 1, 1, 1] & [2, 4, 3, 3, 3, 2, 3] & [0, 3, 1, 1, 1, 1, 1]\\
$7_{4}$ & [2, 3, 2, 2, 3, 2, 2] & [0, 2, 1, 1, 2, 1, 1] & [2, 3, 3, 3, 3, 3, 3] & [0, 2, 1, 1, 2, 1, 1]\\
$7_{5}$ & [3, 3, 2, 2, 2, 2, 2] & [1, 2, 1, 1, 1, 1, 1] & [3, 3, 3, 3, 2, 3, 3] & [1, 2, 1, 1, 1, 1, 1]\\
$7_{6}$ & [4, 3, 2, 2, 2, 2, 2] & [3, 2, 1, 1, 1, 1, 1] & [4, 3, 3, 3, 3, 3, 3] & [3, 2, 1, 1, 1, 1, 1]\\
$7_{7}$ & [3, 2, 2, 2, 2, 2, 2] & [0, 1, 1, 1, 1, 1, 1] & [3, 2, 3, 2, 3, 2, 3] & [0, 1, 1, 1, 1, 1, 1]\\
$7_{8}$ & [4, 2, 2, 2, 2, 2, 2] & [2, 1, 1, 1, 1, 1, 1] & [4, 2, 3, 2, 3, 3, 3] & [2, 1, 1, 1, 1, 1, 1]\\
$7_{9}$ & [5, 2, 2, 2, 2, 2, 2] & [4, 1, 1, 1, 1, 1, 1] & [5, 2, 3, 3, 3, 3, 3] & [4, 1, 1, 1, 1, 1, 1]\\
$7_{10}$ & [6, 2, 2, 2, 2, 2, 2] & [0, 1, 1, 1, 1, 1, 1] & [6, 3, 3, 3, 3, 3, 3] & [0, 2, 2, 2, 2, 2, 2]\\
$8_{0}$ & [2, 2, 2, 2, 2, 2, 2, 2] & [1, 1, 1, 1, 1, 1, 1, 1] & [2, 2, 2, 2, 3, 2, 2, 3] & [1, 1, 1, 1, 1, 1, 1, 1]\\
$8_{1}$ & [2, 2, 3, 2, 2, 2, 2, 2] & [1, 1, 0, 1, 1, 1, 1, 1] & [2, 2, 3, 3, 3, 2, 2, 3] & [1, 1, 0, 1, 1, 1, 1, 2]\\
$8_{2}$ & [2, 2, 3, 2, 2, 3, 2, 2] & [1, 1, 1, 1, 1, 1, 1, 1] & [2, 2, 3, 3, 3, 3, 3, 3] & [1, 1, 1, 1, 1, 1, 1, 1]\\
$8_{3}$ & [2, 3, 2, 2, 2, 2, 2, 2] & [1, 2, 1, 0, 1, 1, 1, 1] & [2, 3, 2, 3, 3, 2, 2, 3] & [1, 2, 1, 0, 2, 1, 1, 2]\\
$8_{4}$ & [2, 3, 2, 2, 2, 3, 2, 2] & [0, 1, 1, 1, 1, 2, 1, 1] & [2, 3, 2, 3, 3, 3, 3, 3] & [0, 1, 1, 1, 1, 2, 1, 1]\\
$8_{5}$ & [3, 3, 2, 2, 2, 2, 2, 2] & [1, 1, 1, 1, 1, 1, 1, 1] & [3, 3, 2, 3, 3, 2, 3, 3] & [1, 1, 1, 1, 1, 1, 1, 1]\\
$8_{6}$ & [3, 2, 2, 2, 2, 2, 2, 2] & [2, 1, 1, 0, 1, 1, 0, 1] & [3, 2, 2, 3, 2, 2, 3, 3] & [2, 1, 1, 0, 1, 1, 0, 2]\\
$8_{7}$ & [2, 2, 2, 2, 4, 2, 2, 2] & [1, 1, 0, 1, 2, 1, 1, 1] & [2, 2, 2, 3, 4, 3, 3, 3] & [1, 1, 0, 1, 2, 1, 1, 1]\\
$8_{8}$ & [3, 2, 2, 2, 3, 2, 2, 2] & [1, 0, 1, 1, 2, 1, 1, 1] & [3, 2, 2, 3, 3, 3, 3, 3] & [1, 0, 1, 1, 2, 1, 1, 1]\\
$8_{9}$ & [3, 2, 2, 2, 2, 2, 2, 2] & [2, 1, 1, 1, 1, 1, 1, 1] & [3, 2, 2, 3, 2, 3, 2, 3] & [2, 1, 1, 2, 1, 2, 1, 2]\\
$8_{10}$ & [4, 2, 2, 2, 2, 2, 2, 2] & [2, 0, 1, 1, 1, 1, 1, 1] & [4, 2, 2, 3, 2, 3, 3, 3] & [2, 0, 1, 1, 1, 1, 1, 1]\\
$8_{11}$ & [2, 5, 2, 2, 2, 2, 2, 2] & [0, 4, 1, 1, 1, 1, 1, 1] & [2, 5, 3, 3, 3, 3, 2, 3] & [0, 4, 1, 1, 1, 1, 1, 1]\\
$8_{12}$ & [2, 4, 2, 2, 2, 3, 2, 2] & [0, 3, 1, 1, 1, 2, 1, 1] & [2, 4, 3, 3, 3, 3, 3, 3] & [0, 3, 1, 1, 1, 2, 1, 1]\\
$8_{13}$ & [3, 4, 2, 2, 2, 2, 2, 2] & [1, 3, 1, 1, 1, 1, 1, 1] & [3, 4, 3, 3, 3, 2, 3, 3] & [1, 3, 1, 1, 1, 1, 1, 1]\\
$8_{14}$ & [4, 4, 2, 2, 2, 2, 2, 2] & [3, 3, 1, 1, 1, 1, 1, 1] & [4, 4, 3, 3, 3, 3, 3, 3] & [3, 3, 1, 1, 1, 1, 1, 1]\\
$8_{15}$ & [3, 3, 2, 2, 3, 2, 2, 2] & [1, 2, 1, 1, 2, 1, 1, 1] & [3, 3, 3, 3, 3, 3, 3, 3] & [1, 2, 1, 1, 2, 1, 1, 1]\\
$8_{16}$ & [3, 3, 2, 2, 2, 2, 2, 2] & [0, 2, 1, 1, 1, 1, 1, 1] & [3, 3, 3, 3, 2, 3, 2, 3] & [0, 2, 1, 1, 1, 1, 1, 1]\\
$8_{17}$ & [4, 3, 2, 2, 2, 2, 2, 2] & [2, 2, 1, 1, 1, 1, 1, 1] & [4, 3, 3, 3, 2, 3, 3, 3] & [2, 2, 1, 1, 1, 1, 1, 1]\\
$8_{18}$ & [5, 3, 2, 2, 2, 2, 2, 2] & [4, 2, 1, 1, 1, 1, 1, 1] & [5, 3, 3, 3, 3, 3, 3, 3] & [4, 2, 1, 1, 1, 1, 1, 1]\\
$8_{19}$ & [4, 2, 2, 2, 2, 2, 2, 2] & [1, 1, 1, 1, 1, 1, 1, 1] & [4, 2, 3, 2, 3, 2, 3, 3] & [1, 1, 1, 1, 1, 1, 1, 1]\\
$8_{20}$ & [5, 2, 2, 2, 2, 2, 2, 2] & [3, 1, 1, 1, 1, 1, 1, 1] & [5, 2, 3, 2, 3, 3, 3, 3] & [3, 1, 1, 1, 1, 1, 1, 1]\\
$8_{21}$ & [6, 2, 2, 2, 2, 2, 2, 2] & [5, 1, 1, 1, 1, 1, 1, 1] & [6, 2, 3, 3, 3, 3, 3, 3] & [5, 1, 1, 1, 1, 1, 1, 1]\\
$8_{22}$ & [7, 2, 2, 2, 2, 2, 2, 2] & [0, 1, 1, 1, 1, 1, 1, 1] & [7, 3, 3, 3, 3, 3, 3, 3] & [0, 2, 2, 2, 2, 2, 2, 2]\\
    \hline
    \end{tabular}
    \caption{The identity element of the sandpile group of $G_{T,c}$.}
    \label{tab:identities1}
\end{table}

\begin{table}[ht]
    \centering
    \tiny
    \begin{tabular}{|c|cc|cc|}
    \hline
    $T$ & $c$ & identity & $c$ & identity \\
    \hline
$2_{0}$ & [2, 2] & [1, 1] & [3, 3] & [2, 2]\\
$3_{0}$ & [3, 2, 2] & [1, 1, 1] & [4, 3, 3] & [2, 2, 2]\\
$4_{0}$ & [3, 3, 2, 2] & [1, 1, 1, 1] & [4, 4, 3, 3] & [2, 2, 2, 2]\\
$4_{1}$ & [4, 2, 2, 2] & [1, 1, 1, 1] & [5, 3, 3, 3] & [2, 2, 2, 2]\\
$5_{0}$ & [3, 3, 2, 3, 2] & [1, 1, 1, 1, 1] & [4, 4, 3, 4, 3] & [2, 2, 2, 2, 2]\\
$5_{1}$ & [4, 3, 2, 2, 2] & [1, 1, 1, 1, 1] & [5, 4, 3, 3, 3] & [2, 2, 2, 2, 2]\\
$5_{2}$ & [5, 2, 2, 2, 2] & [1, 1, 1, 1, 1] & [6, 3, 3, 3, 3] & [2, 2, 2, 2, 2]\\
$6_{0}$ & [3, 3, 3, 2, 3, 2] & [1, 1, 1, 1, 1, 1] & [4, 4, 4, 3, 4, 3] & [2, 2, 2, 2, 2, 2]\\
$6_{1}$ & [3, 4, 2, 2, 3, 2] & [1, 1, 1, 1, 1, 1] & [4, 5, 3, 3, 4, 3] & [2, 2, 2, 2, 2, 2]\\
$6_{2}$ & [4, 4, 2, 2, 2, 2] & [1, 1, 1, 1, 1, 1] & [5, 5, 3, 3, 3, 3] & [2, 2, 2, 2, 2, 2]\\
$6_{3}$ & [4, 3, 2, 3, 2, 2] & [1, 1, 1, 1, 1, 1] & [5, 4, 3, 4, 3, 3] & [2, 2, 2, 2, 2, 2]\\
$6_{4}$ & [5, 3, 2, 2, 2, 2] & [1, 1, 1, 1, 1, 1] & [6, 4, 3, 3, 3, 3] & [2, 2, 2, 2, 2, 2]\\
$6_{5}$ & [6, 2, 2, 2, 2, 2] & [1, 1, 1, 1, 1, 1] & [7, 3, 3, 3, 3, 3] & [2, 2, 2, 2, 2, 2]\\
$7_{0}$ & [3, 3, 3, 2, 3, 3, 2] & [1, 1, 1, 1, 1, 1, 1] & [4, 4, 4, 3, 4, 4, 3] & [2, 2, 2, 2, 2, 2, 2]\\
$7_{1}$ & [3, 3, 3, 2, 4, 2, 2] & [1, 1, 1, 1, 1, 1, 1] & [4, 4, 4, 3, 5, 3, 3] & [2, 2, 2, 2, 2, 2, 2]\\
$7_{2}$ & [4, 3, 3, 2, 3, 2, 2] & [1, 1, 1, 1, 1, 1, 1] & [5, 4, 4, 3, 4, 3, 3] & [2, 2, 2, 2, 2, 2, 2]\\
$7_{3}$ & [3, 5, 2, 2, 2, 3, 2] & [1, 1, 1, 1, 1, 1, 1] & [4, 6, 3, 3, 3, 4, 3] & [2, 2, 2, 2, 2, 2, 2]\\
$7_{4}$ & [3, 4, 2, 2, 4, 2, 2] & [1, 1, 1, 1, 1, 1, 1] & [4, 5, 3, 3, 5, 3, 3] & [2, 2, 2, 2, 2, 2, 2]\\
$7_{5}$ & [4, 4, 2, 2, 3, 2, 2] & [1, 1, 1, 1, 1, 1, 1] & [5, 5, 3, 3, 4, 3, 3] & [2, 2, 2, 2, 2, 2, 2]\\
$7_{6}$ & [5, 4, 2, 2, 2, 2, 2] & [1, 1, 1, 1, 1, 1, 1] & [6, 5, 3, 3, 3, 3, 3] & [2, 2, 2, 2, 2, 2, 2]\\
$7_{7}$ & [4, 3, 2, 3, 2, 3, 2] & [1, 1, 1, 1, 1, 1, 1] & [5, 4, 3, 4, 3, 4, 3] & [2, 2, 2, 2, 2, 2, 2]\\
$7_{8}$ & [5, 3, 2, 3, 2, 2, 2] & [1, 1, 1, 1, 1, 1, 1] & [6, 4, 3, 4, 3, 3, 3] & [2, 2, 2, 2, 2, 2, 2]\\
$7_{9}$ & [6, 3, 2, 2, 2, 2, 2] & [1, 1, 1, 1, 1, 1, 1] & [7, 4, 3, 3, 3, 3, 3] & [2, 2, 2, 2, 2, 2, 2]\\
$7_{10}$ & [7, 2, 2, 2, 2, 2, 2] & [1, 1, 1, 1, 1, 1, 1] & [8, 3, 3, 3, 3, 3, 3] & [2, 2, 2, 2, 2, 2, 2]\\
$8_{0}$ & [3, 3, 3, 3, 2, 3, 3, 2] & [1, 1, 1, 1, 1, 1, 1, 1] & [4, 4, 4, 4, 3, 4, 4, 3] & [2, 2, 2, 2, 2, 2, 2, 2]\\
$8_{1}$ & [3, 3, 4, 2, 2, 3, 3, 2] & [1, 1, 1, 1, 1, 1, 1, 1] & [4, 4, 5, 3, 3, 4, 4, 3] & [2, 2, 2, 2, 2, 2, 2, 2]\\
$8_{2}$ & [3, 3, 4, 2, 2, 4, 2, 2] & [1, 1, 1, 1, 1, 1, 1, 1] & [4, 4, 5, 3, 3, 5, 3, 3] & [2, 2, 2, 2, 2, 2, 2, 2]\\
$8_{3}$ & [3, 4, 3, 2, 2, 3, 3, 2] & [1, 1, 1, 1, 1, 1, 1, 1] & [4, 5, 4, 3, 3, 4, 4, 3] & [2, 2, 2, 2, 2, 2, 2, 2]\\
$8_{4}$ & [3, 4, 3, 2, 2, 4, 2, 2] & [1, 1, 1, 1, 1, 1, 1, 1] & [4, 5, 4, 3, 3, 5, 3, 3] & [2, 2, 2, 2, 2, 2, 2, 2]\\
$8_{5}$ & [4, 4, 3, 2, 2, 3, 2, 2] & [1, 1, 1, 1, 1, 1, 1, 1] & [5, 5, 4, 3, 3, 4, 3, 3] & [2, 2, 2, 2, 2, 2, 2, 2]\\
$8_{6}$ & [4, 3, 3, 2, 3, 3, 2, 2] & [1, 1, 1, 1, 1, 1, 1, 1] & [5, 4, 4, 3, 4, 4, 3, 3] & [2, 2, 2, 2, 2, 2, 2, 2]\\
$8_{7}$ & [3, 3, 3, 2, 5, 2, 2, 2] & [1, 1, 1, 1, 1, 1, 1, 1] & [4, 4, 4, 3, 6, 3, 3, 3] & [2, 2, 2, 2, 2, 2, 2, 2]\\
$8_{8}$ & [4, 3, 3, 2, 4, 2, 2, 2] & [1, 1, 1, 1, 1, 1, 1, 1] & [5, 4, 4, 3, 5, 3, 3, 3] & [2, 2, 2, 2, 2, 2, 2, 2]\\
$8_{9}$ & [4, 3, 3, 2, 3, 2, 3, 2] & [1, 1, 1, 1, 1, 1, 1, 1] & [5, 4, 4, 3, 4, 3, 4, 3] & [2, 2, 2, 2, 2, 2, 2, 2]\\
$8_{10}$ & [5, 3, 3, 2, 3, 2, 2, 2] & [1, 1, 1, 1, 1, 1, 1, 1] & [6, 4, 4, 3, 4, 3, 3, 3] & [2, 2, 2, 2, 2, 2, 2, 2]\\
$8_{11}$ & [3, 6, 2, 2, 2, 2, 3, 2] & [1, 1, 1, 1, 1, 1, 1, 1] & [4, 7, 3, 3, 3, 3, 4, 3] & [2, 2, 2, 2, 2, 2, 2, 2]\\
$8_{12}$ & [3, 5, 2, 2, 2, 4, 2, 2] & [1, 1, 1, 1, 1, 1, 1, 1] & [4, 6, 3, 3, 3, 5, 3, 3] & [2, 2, 2, 2, 2, 2, 2, 2]\\
$8_{13}$ & [4, 5, 2, 2, 2, 3, 2, 2] & [1, 1, 1, 1, 1, 1, 1, 1] & [5, 6, 3, 3, 3, 4, 3, 3] & [2, 2, 2, 2, 2, 2, 2, 2]\\
$8_{14}$ & [5, 5, 2, 2, 2, 2, 2, 2] & [1, 1, 1, 1, 1, 1, 1, 1] & [6, 6, 3, 3, 3, 3, 3, 3] & [2, 2, 2, 2, 2, 2, 2, 2]\\
$8_{15}$ & [4, 4, 2, 2, 4, 2, 2, 2] & [1, 1, 1, 1, 1, 1, 1, 1] & [5, 5, 3, 3, 5, 3, 3, 3] & [2, 2, 2, 2, 2, 2, 2, 2]\\
$8_{16}$ & [4, 4, 2, 2, 3, 2, 3, 2] & [1, 1, 1, 1, 1, 1, 1, 1] & [5, 5, 3, 3, 4, 3, 4, 3] & [2, 2, 2, 2, 2, 2, 2, 2]\\
$8_{17}$ & [5, 4, 2, 2, 3, 2, 2, 2] & [1, 1, 1, 1, 1, 1, 1, 1] & [6, 5, 3, 3, 4, 3, 3, 3] & [2, 2, 2, 2, 2, 2, 2, 2]\\
$8_{18}$ & [6, 4, 2, 2, 2, 2, 2, 2] & [1, 1, 1, 1, 1, 1, 1, 1] & [7, 5, 3, 3, 3, 3, 3, 3] & [2, 2, 2, 2, 2, 2, 2, 2]\\
$8_{19}$ & [5, 3, 2, 3, 2, 3, 2, 2] & [1, 1, 1, 1, 1, 1, 1, 1] & [6, 4, 3, 4, 3, 4, 3, 3] & [2, 2, 2, 2, 2, 2, 2, 2]\\
$8_{20}$ & [6, 3, 2, 3, 2, 2, 2, 2] & [1, 1, 1, 1, 1, 1, 1, 1] & [7, 4, 3, 4, 3, 3, 3, 3] & [2, 2, 2, 2, 2, 2, 2, 2]\\
$8_{21}$ & [7, 3, 2, 2, 2, 2, 2, 2] & [1, 1, 1, 1, 1, 1, 1, 1] & [8, 4, 3, 3, 3, 3, 3, 3] & [2, 2, 2, 2, 2, 2, 2, 2]\\
$8_{22}$ & [8, 2, 2, 2, 2, 2, 2, 2] & [1, 1, 1, 1, 1, 1, 1, 1] & [9, 3, 3, 3, 3, 3, 3, 3] & [2, 2, 2, 2, 2, 2, 2, 2]\\
    \hline
    \end{tabular}
    \caption{The identity element of the sandpile group of $G_{T,c}$.}
    \label{tab:identities2}
\end{table}

In Tables \ref{tab:identities1} and \ref{tab:identities2} is given the identity element of the sandpile group of $G_{T,c}$ for selected values of $d$. 
The entry of the sink has been omitted in the recurrent configurations.
For Table \ref{tab:identities1}, the graph $G_{T,c}$ obtained in the first and second column can be regarded as if $1$ and $2$ edges were added between each leaf of $T$ and the sink $q$, respectively.
For Table \ref{tab:identities2}, the graph $G_{T,c}$ obtained in the first and second column can be regarded as if $1$ and $2$ edges were added between each vertex of $T$ and the sink $q$, respectively.

There are many patterns in the identity element, for example, in Table~\ref{tab:identities1}, we see that the identity element of $G_{T,c}$ when $T$ is a star with at least 3 leaves and the leaves of $T$ are the only vertices connected with the sink, then the configuration 1 if the vertex is a leave and 0 otherwise is the identity element.
It is also interesting to see in Table~\ref{tab:identities2} that when the outerplane graph satisfy that exactly one edge of each inner face is adjacent with the outer face, then the identity element of the sandpile group of the dual with the outer face vertex as sink is the $\bf 1$ configuration.
An analogous result is observed when 2 faces are shared.
From which is conjectured that $G_{T,c}$ with $c=\deg(T)+k$, then the recurrent configuration is $k{\bf 1}$.

It is known that if $G$ is a planar graph and $G^*$ is a dual graph of $G$, then $K(G)\cong K(G^*)$.
And, there is an isomorphism between the recurrent configurations of $K(G)$ and the recurrent configurations of $K(G^*)$.
In \cite[Section 13.2]{perkinson}, a method was given to recover the recurrent configuration of a dual graph from a recurrent configuration of plane graph.
This method can be used to obtain the identity element of the sandpile group of the outerplane graphs whose dual is $G_{T,c}$.
In the following the method is described.

Let $H$ be a plane graph and $H^*$ be the dual graph.
Consider a planar drawing of $H$ and $H^*$ where each edge in $E(H)$ is crossed once by an edge in $E(H^*)$.
This associate bijectively the edges of $H$ with the edges of $H^*$.
An {\it orientation} of a graph is a choice of direction of each edge of the graph, and thus one end of the edge is the {\it head} and the other end is the {\it tail}.
Given an orientation of the edges of $H$, the {\it right-left rule} to orient the edges of $H^*$ consists in, for each edge $e\in E(H)$, following the direction of $e$, the direction of the associated edge $e^*\in E(H^*)$ goes from the right face to the left face separated by $e$.
Now, given a recurrent configuration $d$ of the sandpile group $K(H)$ with sink $q$, take  $d_q=-\sum_{v\in V(H)\setminus q}d_v$.
Consider an orientation of $H$, and orient the edges of $H^*$ following the right-left rule.
Find an $f\in {\mathbb Z}^{E(H)}$ such that $\partial(H)f=d$, where $\partial(H)$ is the {\it oriented incidence matrix}.
Take $f'\in{\mathbb Z}^{E(H^*)}$ such that $f'_{e^*}=f_{e}$.
The configuration $d'=\partial(H^*)f'$ is in the equivalence class of the recurrent configuration in $K(H^*)$ we are looking for.
To find the recurrent configuration in the class of $d'$, we suggest to use the following result.

\begin{proposition}\cite[Theorem 2.36]{Alfaro}\label{integer1}
Let $G$ be a graph with sink vertex $q$, and ${c}\in{\mathbb Z}^{V(G)\setminus q}$.
If ${x}^*$ is an optimal solution of the integer linear program 
\begin{eqnarray}\label{main:model}
\text{\rm maximize }   & & {\bf1}\cdot{x} \nonumber\\
\text{\rm subject to } & & {\bf 0}\leq { c}+{x}L_q(G) \leq \sigma_{max},\nonumber\\
	                       & & {x}\in {\mathbb Z}^{V(G)\setminus q}, \nonumber
\end{eqnarray}
then ${ x}^*$ is unique and ${ c} + { x}^* L_q(G)$ is a recurrent configuration in $SP(G,q)$ in the equivalence class of $c$.
\end{proposition}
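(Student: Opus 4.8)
The plan is to exploit that for a connected graph the reduced Laplacian $L_q(G)$ is nonsingular, so the affine map $x\mapsto c+xL_q(G)$ is an injection of $\mathbb{Z}^{V(G)\setminus q}$ onto the equivalence class of $c$. Consequently the feasible set of the program, namely those $x$ with $c+xL_q(G)$ lying in the box $[\mathbf{0},\sigma_{max}]$, is in bijection with the stable configurations equivalent to $c$. This set is nonempty, since stabilizing $c$ produces a stable configuration in its class, and finite, since the box contains finitely many integer points and the map is injective; hence the objective $\mathbf{1}\cdot x$ attains a maximum. Because the map is a bijection, uniqueness of the optimizer $x^*$ will follow once I show that every optimum corresponds to the unique recurrent configuration of the class; I therefore reduce everything to proving that $r:=c+x^*L_q(G)$ is recurrent.

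First I would record the characterization, via Dhar's burning algorithm, that a stable configuration $r$ is recurrent if and only if for every nonempty $S\subseteq V(G)\setminus q$ there is a vertex $v\in S$ with $r_v\ge \deg_S(v)$, where $\deg_S(v)$ counts the edges from $v$ into $S$. Assume for contradiction that the optimal $r$ is not recurrent, and let $S$ be the set of vertices that remain \emph{unburnt} when Dhar's algorithm is run on $r$. This particular $S$ satisfies two complementary inequalities at once: the unburnt condition gives $r_v<\deg_S(v)$ for every $v\in S$, while the fact that every vertex of $V(G)\setminus(S\cup\{q\})$ did burn gives $r_u\ge\deg_S(u)$ for every such $u$. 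Writing out $\mathbf{1}_S L_q(G)$, whose $u$-th entry equals $\deg(u)-\deg_S(u)$ for $u\in S$ and $-\deg_S(u)$ for $u\notin S$, these are exactly the inequalities needed to check that $r+\mathbf{1}_S L_q(G)$ still lies in $[\mathbf{0},\sigma_{max}]$: the entries indexed by $S$ stay at most $\sigma_{max}$ by the unburnt condition and stay nonnegative trivially, while the entries outside $S$ stay nonnegative by the burnt condition and stay at most $\sigma_{max}$ trivially. Hence $x^*+\mathbf{1}_S$ is feasible, and since $S\neq\emptyset$ its objective is $\mathbf{1}\cdot x^*+|S|>\mathbf{1}\cdot x^*$, contradicting optimality. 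Therefore $r$ is recurrent.

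Finally, since $c+x^*L_q(G)$ differs from $c$ by an element of the integer row space of $L_q(G)$, it lies in the equivalence class of $c$, and being recurrent it is the unique recurrent representative of that class. Combined with the injectivity of $x\mapsto c+xL_q(G)$, this forces $x^*$ to be unique, completing the argument. The hard part is the middle step: producing, from the failure of recurrence, a single set $S$ for which adding $\mathbf{1}_S$ to the firing vector preserves \emph{both} bounds of the stability box simultaneously. This is precisely what Dhar's burning algorithm supplies, the burnt/unburnt dichotomy yielding the matched pair $r_u\ge\deg_S(u)$ off $S$ and $r_v<\deg_S(v)$ on $S$; without choosing $S$ to be the unburnt set, adding $\mathbf{1}_S$ could violate nonnegativity at a vertex on the boundary of $S$.
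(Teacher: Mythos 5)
The paper does not actually prove this proposition: it is imported verbatim from the first author's thesis \cite{Alfaro} and used as a black box, so there is no in-paper argument to compare yours against. Judged on its own, your proof is correct and is the natural one for this statement. The bijection between feasible $x$ and stable configurations in the class of $c$ (via nonsingularity of $L_q(G)$ for connected $G$), the finiteness of the feasible set, and the exchange argument with $S$ equal to the unburnt set of Dhar's algorithm all check out; in particular the two inequalities you extract from the burnt/unburnt dichotomy, $r_v<\deg_S(v)$ on $S$ and $r_u\ge\deg_S(u)$ off $S$, are exactly what is needed for $x^*+\mathbf{1}_S$ to remain in the box $[\mathbf{0},\sigma_{max}]$, and the entries of $\mathbf{1}_S L_q(G)$ are as you compute them. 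Two small remarks. First, ``stabilizing $c$'' is not literally available when $c$ has negative entries (the hypothesis allows $c\in\mathbb{Z}^{V(G)\setminus q}$); nonemptiness of the feasible set should instead be justified by the standard fact that every equivalence class modulo the row lattice of $L_q(G)$ contains a (stable, indeed recurrent) representative. Second, your argument leans on two imported facts --- Dhar's burning criterion and the uniqueness of the recurrent representative in each class --- which is entirely reasonable here since the paper itself treats the basic sandpile theory (e.g.\ \cite{Klivans}) as known.
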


\begin{figure}[ht]
    \centering
    \begin{tabular}{c@{\extracolsep{0cm}}c@{\extracolsep{0cm}}c}
         \begin{tikzpicture}[scale=.7,thick]
        	\tikzstyle{every node}=[minimum width=0pt, inner sep=1pt, circle]
        	
        	\clip (-3.5,-3.5) rectangle (2.5,3.5);
        	
            \draw (-1,0) + (120:2) node[draw,blue] (0) {\tiny 1};
            \draw (120:1) node[draw,blue] (1) {\tiny 2};
            \draw (60:2) node[draw,blue] (2) {\tiny 3};
            \draw (0:1) node[draw,blue] (3) {\tiny 4};
            \draw (-60:2) node[draw,blue] (4) {\tiny 5};
            \draw (240:1) node[draw,blue] (6) {\tiny 6};
            \draw (-1,0) + (240:2) node[draw,blue] (7) { \tiny $p$};
            \draw (180:2) node[draw,blue] (8) {\tiny 0};
            
            \draw  (1) edge[blue] (8);
        	\draw  (1) edge[blue] (6);
        	\draw  (6) edge[blue] (8);
        	\draw  (1) edge[blue] (3);
        	\draw  (3) edge[blue] (6);
        	
        	\draw[blue]  (0) -- (1) -- (2) -- (3) -- (4);
        	\draw[blue]  (4) -- (6) -- (7) -- (8) -- (0);
        	
            \draw (-1,0) + (120:1) node[draw] (11) {\tiny 4};
        	\draw (-1,0) + (240:1) node[draw] (12) {\tiny 5};
        	\draw (180:1) node[draw] (13) {\tiny 0};
        	\draw (0,0) node[draw] (14) {\tiny 1};
        	\draw (60:1) node[draw] (15) {\tiny 2};
        	\draw (-60:1) node[draw] (16) {\tiny 3};
        	\draw (-3,0) node[draw] (17) {\tiny $q$};
        	
        	\draw  (11) edge (13);
        	\draw  (13) edge (12);
        	\draw  (13) edge (14);
        	\draw  (14) edge (15);
        	\draw  (14) edge (16);
        	\draw (11) edge (17);
        	\draw (17) edge (12);
        	
        	\draw (17) to[out=90, looseness=3, in=80] (11);
        	\draw (17) to[out=90, looseness=2, in=120] (15);
        	\draw (17) to[out=100, looseness=3.4, in=20] (15);
        	\draw (17) to[out=-90, looseness=3, in=-80] (12);
        	\draw (17) to[out=-90, looseness=2, in=-120] (16);
        	\draw (17) to[out=-100, looseness=3.4, in=-20] (16);
        \end{tikzpicture}
         &
         \begin{tikzpicture}[scale=.7,thick]
        	\tikzstyle{every node}=[minimum width=0pt, inner sep=1pt, circle]
        	
        	\clip (-3.5,-3.5) rectangle (2.5,3.5);
        	
            
        	
        	
            \draw (-1,0) + (120:1) node[draw] (11) {\tiny 1};
        	\draw (-1,0) + (240:1) node[draw] (12) {\tiny 1};
        	\draw (180:1) node[draw] (13) {\tiny 2};
        	\draw (0,0) node[draw] (14) {\tiny 2};
        	\draw (60:1) node[draw] (15) {\tiny 1};
        	\draw (-60:1) node[draw] (16) {\tiny 1};
        	\draw (-3,0) node[draw] (17) {\tiny -8};
        	
        	\draw  (11) edge[<-] (13);
        	\draw  (13) edge[<-] (12);
        	\draw  (13) edge[<-] (14);
        	\draw  (14) edge[<-] (15);
        	\draw  (14) edge[<-] (16);
        	\draw[->] (11) edge (17);
        	\draw[<-] (17) edge (12);
        	
        	\draw[->] (17) to[out=90, looseness=3, in=80] (11);
        	\draw[->] (17) to[out=90, looseness=2, in=120] (15);
        	\draw[->] (17) to[out=100, looseness=3.4, in=20] (15);
        	\draw[->] (17) to[out=-90, looseness=3, in=-80] (12);
        	\draw[->] (17) to[out=-90, looseness=2, in=-120] (16);
        	\draw[->] (17) to[out=-100, looseness=3.4, in=-20] (16);
        	
        	\draw (-2.25,0.75) node[,red,] (21) {\tiny\bf -1};
        	\draw (-2.25,-0.75) node[,red,] (21) {\tiny\bf -1};
        	\draw (-1.45,0.25) node[,red,] (21) {\tiny\bf -1};
        	\draw (-1.1,-0.6) node[,red,] (21) {\tiny\bf 1};
        	\draw (-1.2,1.5) node[,red,] (21) {\tiny\bf 1};
        	\draw (-1.2,-1.5) node[,red,] (21) {\tiny\bf 1};
        	\draw (-0.4,-0.22) node[,red,] (21) {\tiny\bf 0};
        	\draw (-.18,1.5) node[,red,] (21) {\tiny\bf 1};
        	\draw (1.2,0.9) node[,red,] (21) {\tiny\bf 1};
        	\draw (-.18,-1.5) node[,red,] (21) {\tiny\bf 1};
        	\draw (1.2,-0.9) node[,red,] (21) {\tiny\bf 1};
        	\draw (0.48,0.4) node[,red,] (21) {\tiny\bf 1};
        	\draw (0.48,-0.4) node[,red,] (21) {\tiny\bf 1};
        \end{tikzpicture}
        &
         \begin{tikzpicture}[scale=.7,thick]
        	\tikzstyle{every node}=[minimum width=0pt, inner sep=1pt, circle]
        	
        	\clip (-2.5,-3.5) rectangle (1.5,3.5);
        	
            \draw (-1,0) + (120:2) node[draw,blue] (0) {\tiny 0};
            \draw (120:1) node[draw,blue] (1) {\tiny 0};
            \draw (60:2) node[draw,blue] (2) {\tiny 0};
            \draw (0:1) node[draw,blue] (3) {\tiny 0};
            \draw (-60:2) node[draw,blue] (4) {\tiny 0};
            \draw (240:1) node[draw,blue] (6) {\tiny 0};
            \draw (-1,0) + (240:2) node[draw,blue] (7) { \tiny 0};
            \draw (180:2) node[draw,blue] (8) {\tiny 0};
            
            \draw  (1) edge[blue,->] (8);
        	\draw  (1) edge[blue,->] (6);
        	\draw  (6) edge[blue,->] (8);
        	\draw  (1) edge[blue,->] (3);
        	\draw  (3) edge[blue,->] (6);
        	
        	\draw[blue] 
        	(0) edge[->] (1) 
        	(1) edge[->] (2) 
        	(2) edge[->] (3) 
        	(3) edge[->] (4)
        	(4) edge[->] (6)
        	(6) edge[->] (7)
        	(7) edge[<-] (8)
        	(8) edge[<-] (0);

        	\draw (-2.25,0.75) node[,red,] (21) {\tiny\bf -1};
        	\draw (-2.25,-0.75) node[,red,] (21) {\tiny\bf -1};
        	\draw (-1.2,0.25) node[,red,] (21) {\tiny\bf -1};
        	\draw (-1.3,-0.6) node[,red,] (21) {\tiny\bf 1};
        	\draw (-1.2,1.5) node[,red,] (21) {\tiny\bf 1};
        	\draw (-1.2,-1.5) node[,red,] (21) {\tiny\bf 1};
        	\draw (-0.33,-0.1) node[,red,] (21) {\tiny\bf 0};
        	\draw (0.0,1.4) node[,red,] (21) {\tiny\bf 1};
        	\draw (1.2,0.9) node[,red,] (21) {\tiny\bf 1};
        	\draw (0.1,-1.5) node[,red,] (21) {\tiny\bf 1};
        	\draw (1.2,-0.9) node[,red,] (21) {\tiny\bf 1};
        	\draw (0.4,0.6) node[,red,] (21) {\tiny\bf 1};
        	\draw (0.4,-0.6) node[,red,] (21) {\tiny\bf 1};
        \end{tikzpicture}
         \\
         (a) & (b) & (c)
    \end{tabular}
    \caption{Computation of a configuration in $H^*$ associated with the recurrent configuration in $H$. In (a) a drawing of a plane graph $H$ (black) and its dual $H^*$ (blue) is shown together with the indexing of the non-sink vertices. The vertices $q$ and $p$ are the sink vertices in $H$ and $H^*$, respectively. In (b) an element $f$ in ${\mathbb Z}^{E(H)}$ is shown colored in red such that $\partial(H)f=(2,2,1,1,1,1,-8)\in K(H)$. In (c) $f'$ is used to find a configuration in $K(H
   ^*)$.}
    \label{fig:computingconfiguration}
\end{figure}
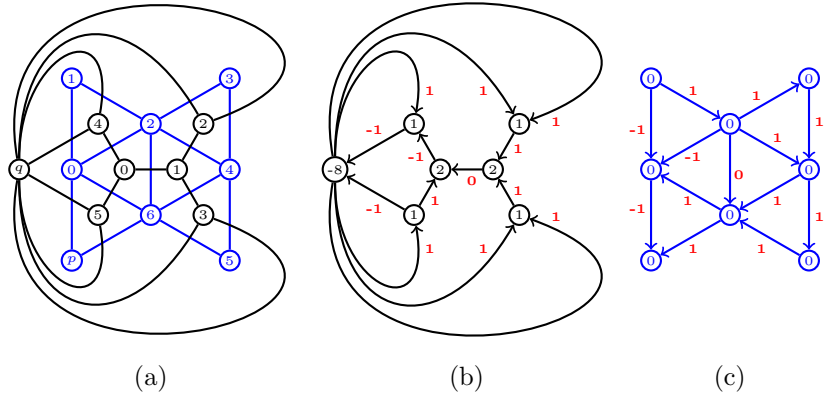

Let us see an example of the procedure to obtain a recurrent configuration in $K(H^*)$ given a configuration in $K(H)$.

\begin{example}
Let $H$ and $H^*$ be the black and blue plane graphs shown in Figure~\ref{fig:computingconfiguration}.a, where the sink vertices have index $q$ and $p$, respectively.
Note $H$ is isomorphic to the graph $G_{T,c}$ where $T$ is the tree $6_2$ in Figure~\ref{fig:Trees} and $c$ satisfy that the sink is adjacent only with the leaves by exactly 2 edges.
Following the indices described in Figure~\ref{fig:computingconfiguration}.a, the configuration $d=(2,2,1,1,1,1,-8)$ is the identity element of $K(H)$ up to the value of the sink $q$. Given the orientation of $H$ described in Figure~\ref{fig:computingconfiguration}.b, the oriented incidence matrix $\partial(H)$ of $H$ is the following:

\[
   \footnotesize\begin{array}{cc}
    & \begin{array}{ccccccccccccc}
    $04\,$ & $10\, $ & $21\,$ & $31\,$ & $4q\,$ & $50\,$ & $5q\,$ & $q2\,$ & $q2'$ & $q3\,$ & $q3'\,$& $q4\,$ & $q5$
    \end{array}\\
   \begin{array}{c}
   0\\
   1\\
   2\\
   3\\
   4\\
   5\\
   q
   \end{array} & \left(\begin{array}{cccccccccccccc}
    -1 &  1 &  0 &  0 &  0 &  1 &  0 &  0 &  0 &  0 &  0 &  0 &  0 \\
    0 & -1 &  1 &  1 &  0 &  0 &  0 &  0 &  0 &  0 &  0 &  0 &  0 \\
    0 &  0 & -1 &  0 &  0 &  0 &  0 &  1 &  1 &  0 &  0 &  0 &  0 \\
    0 &  0 &  0 & -1 &  0 &  0 &  0 &  0 &  0 &  1 &  1 &  0 &  0 \\
    1 &  0 &  0 &  0 & -1 &  0 &  0 &  0 &  0 &  0 &  0 &  1 &  0 \\
    0 &  0 &  0 &  0 &  0 & -1 & -1 &  0 &  0 &  0 &  0 &  0 &  1 \\
    0 &  0 &  0 &  0 &  1 &  0 &  1 & -1 & -1 & -1 & -1 & -1 & -1
  \end{array}\right).
  \end{array}
\]
Let $f=(-1,0,1,1,-1,1,-1,1,1,1,1,1,1)$. It can be seen that $f$ satisfy that $\partial(H)f=d$.
By using the right-left rule, we obtain the orientation of $H^*$ shown in Figure~\ref{fig:computingconfiguration}.c.
Thus, the oriented incidence matrix $\partial(H^*)$ is
\[
   \footnotesize\begin{array}{cc}
     & \begin{array}{cccccccccccccc}
    $\,0p\,$ & $\,10\,$ & $\,12\,$ & $\,20\,$ & $\,23\,$ & $\,24\,$ & $\,26\,$ & $34\,$ & $45\,$ & $\,46\,$ & $56\,$ & $\,60\,$ & $6p\,$
    \end{array}\\
    \begin{array}{c}
    0 \\
    1 \\
    2 \\
    3 \\
    4 \\
    5 \\
    p
    \end{array} & \left(\begin{array}{ccccccccccccc}
    -1 &  1 &  0 &  1 &  0 &  0 &  0 &  0 &  0 &  0 &  0 &  1 &  0 \\
    0 & -1 & -1 &  0 &  0 &  0 &  0 &  0 &  0 &  0 &  0 &  0 &  0 \\
    0 &  0 &  1 & -1 & -1 & -1 & -1 &  0 &  0 &  0 &  0 &  0 &  0 \\
    0 &  0 &  0 &  0 &  1 &  0 &  0 & -1 &  0 &  0 &  0 &  0 &  0 \\
    0 &  0 &  0 &  0 &  0 &  1 &  0 &  1 & -1 & -1 &  0 &  0 &  0 \\
    0 &  0 &  0 &  0 &  0 &  0 &  0 &  0 &  1 &  0 & -1 &  0 &  0 \\
    0 &  0 &  0 &  0 &  0 &  0 &  1 &  0 &  0 &  1 &  1 & -1 & -1 \\
    1 &  0 &  0 &  0 &  0 &  0 &  0 &  0 &  0 &  0 &  0 &  0 &  1 
  \end{array}\right).
  \end{array}
\]
Dualizing $f$, we get $f'=(-1,-1,1,-1,1,1,0,1,1,1,1,1,1)$.
From which we get the configuration $\partial(H^*)f'=(0,0,0,0,0,0,0,0)$.
Now, applying Proposition~\ref{integer1}, we get the following linear integer model:
\begin{eqnarray*}\label{main:model}
\text{\rm maximize }   & & \sum_{i=0}^6 x_i \nonumber\\
\text{\rm subject to } & & 0 \leq 4x_0-x_1-x_2-x_6 \leq 3\\
    & & 0 \leq -x_0+2x_1-x_2 \leq 1\\
    & & 0 \leq -x_0-x_1+5x_2-x_3-x_4-x_6 \leq 4 \\
    & & 0 \leq -x_2+2x_3-x_4 \leq 1 \\
    & & 0 \leq -x_2-x_3+4x_4-x_5-x_6 \leq 3 \\
    & & 0 \leq -x_4+2x_5-x_6 \leq 1 \\
    & & 0 \leq -x_0-x_2-x_4-x_5+5x_6 \leq 4 \\
	                   & & {x_i}\in {\mathbb Z} \text{ for each } i\in\{0,\dots,6\},
\end{eqnarray*}
whose optimal solution is $x^*=(5,6,7,7,7,7,6)$ and the recurrent configuration is $(1,0,4,0,1,1,4,p)$, which in fact is the identity element of the sandpile group of the outerplane graph $H^*$.
\end{example}

\section*{Acknowledgement}
This research was partially supported by SNI and CONACyT.
The authors are grateful to Prof. H.J. Fleischner for sending the authors some of his papers.

\end{document}